\documentclass[10pt]{article}
\usepackage{graphicx} 
\usepackage{algorithm}
\usepackage{algpseudocode}

\usepackage[margin=1in]{geometry}
\usepackage{booktabs}
\usepackage{siunitx}
\usepackage{caption}

\algrenewcommand\algorithmicthen{} 
\usepackage[hidelinks]{hyperref}
\usepackage{amsmath,amsfonts,amssymb, amsthm, mathtools}
\usepackage{cite}
\usepackage{enumitem}
\newtheorem{theorem}{Theorem}[section]
\newtheorem{corollary}{Corollary}[theorem]
\newtheorem{lemma}[theorem]{Lemma}

\newtheorem{assumption}{Assumption}
\DeclareMathOperator*{\argmin}{argmin}
\DeclarePairedDelimiter{\ceil}{\lceil}{\rceil}
\DeclarePairedDelimiter{\floor}{\lfloor}{\rfloor}
\usepackage{booktabs,tabularx,makecell,array,xcolor}
\newcolumntype{L}[1]{>{\raggedright\arraybackslash}p{#1}}
\newcolumntype{C}[1]{>{\centering\arraybackslash}p{#1}}
\newcolumntype{R}[1]{>{\raggedleft\arraybackslash}p{#1}}
\usepackage{amsmath}

\usepackage{graphicx}
\usepackage{subcaption}
\usepackage{amsmath}
\usepackage{enumitem}
\usepackage{caption}
\setcellgapes{1pt}\makegapedcells    

\renewcommand{\arraystretch}{1.10}   

\usepackage[hidelinks]{hyperref} 

\let\Lo\L
\newcommand{\mo}{}

\newcommand{\dist}{\operatorname{dist}}
\renewcommand{\L}{\mathcal{L}}

\newcommand{\cL}{c_{\L}}
\newcommand{\ymax}{\bar{y}_{\max}}
\newcommand{\dom}{\operatorname{dom}}

\title{An Adaptive Augmented Lagrangian Method for Deterministic and Stochastic Nonconvex Optimization}
\author{Tianzhu Liu\thanks{Department of Statistics \& Operations Research,
UNC–Chapel Hill. Email: \href{tianzhu@unc.edu}{tianzhu@unc.edu}}
\and
Michael J. O'Neill\thanks{Department of Statistics \& Operations Research,
UNC–Chapel Hill. Email:
\href{mikeoneill@unc.edu}{mikeoneill@unc.edu}}}
\date{\today}

\usepackage{enumitem}
\begin{document}

\maketitle

\begin{abstract}
We present an inexact Augmented Lagrangian algorithm for solving nonlinear, non‑convex optimization problems. Unlike most recently proposed Augmented Lagrangian methods with worst-case complexity guarantees, we utilize adaptive penalty parameter updates and full dual stepsizes. We show that the method matches the best known worst-case complexity results for Augmented Lagrangian methods (up to logarithmic factors) when both the function and constraints are deterministic, when the function is stochastic and the constraints are deterministic, and when both are stochastic. Experiments on CUTEst test problems confirm the practical advantages of the proposed approach over Augmented Lagrangian methods with non-adaptive penalty parameters and/or short dual step sizes in the deterministic setting. Numerical results on stochastic constrained optimization problems in machine learning also confirm these findings.
\end{abstract}

\section{Introduction}
We consider the following constrained optimization problem: 
\begin{equation}
    \min_{x\in \mathbb{R}^n} f(x) + g(x) \quad \text{s.t.} \quad c(x) = 0, \label{eq:probdef}
\end{equation}
where $f : \mathbb{R}^n \rightarrow \mathbb{R}$ is a continuously-differentiable non-convex function, and $g : \mathbb{R}^n \rightarrow \mathbb{R}\cup \{\infty\}$ is convex but possibly non-differentiable. In addition, the constraints $c: \mathbb{R}^n \rightarrow \mathbb{R}^m$ are smooth and continuously differentiable. \mo{We also consider cases in which the problem may be stochastic i.e. $f(x) = \mathbb{E}_{\xi}[F_0(x;\xi)]$ and/or $c(x) = \mathbb{E}_{\xi}[C(x;\xi)]$.}

Applications of deterministic nonlinearly constrained optimization are widespread, including optimal control, PDE-constrained optimization \cite{gnegel2021solution}, network flow, and resource allocation \cite{doostmohammadian20221st}. In the stochastic setting, applications include physics-informed neural networks\cite{mowlavi2023optimal, lu2021physics}, Neyman–Pearson classification\cite{Li2023, tong2016survey}, and many others.

\mo{A popular method for solving constrained optimization problems of the form \eqref{eq:probdef} is the Augmented Lagrangian method, which adds a penalty term to the Lagrangian function:
\begin{equation}
\label{augmented_lagrangian_function}
    \mathcal{L}_\beta(x,y) =  f(x) + c(x)^Ty + \frac{\beta}{2} \|c(x)\|^2
\end{equation}
where $y \in \mathbb{R}^m$ are Lagrange multipliers and $\beta>0$ is the penalty parameter. Then, a basic Augmented Lagrangian method consists of alternating primal and dual steps of the form:
\begin{equation} \label{eq:basicAL}
    x_{k+1} \in \underset{x \in \mathbb{R}^n}{\argmin} \{ \mathcal{L}_{\beta_k}(x, y_k) + g(x)\}, \quad \quad y_{k+1} = y_k + \beta_k c(x_{k+1}).
\end{equation}
In recent years, many works have proposed Augmented Lagrangian algorithms with worst-case complexity guarantees \cite{Sahin2019,Li2020,Li2023,Xu2021,Xie2019,Birgin2020,grapiglia2021complexity}. For our purposes,} we call a primal-dual pair $(x,y)$ an $\epsilon$-approximate stationary solution if
\begin{equation} \label{eq:approximate_optimality_condition}
    \dist \left( -\nabla_x \mathcal{L}(x, y), \partial g(x) \right) + \|c(x)\| \leq \epsilon,
\end{equation}
where $\mathcal{L}(x,y)$ denotes the standard Lagrangian function.

\mo{Most Augmented Lagrangian methods with complexity guarantees rely upon two major methodological changes when compared with standard Augmented Lagrangian routines: increasing the penalty parameter at each iteration via a fixed schedule and decoupling the dual stepsize and the penalty parameter. The penalty parameter schedule is chosen to guarantee that no more than $\mathcal{O}(\log(\epsilon^{-1}))$ outer iterations are required to find a point satisfying \eqref{eq:approximate_optimality_condition}. However, this procedure may increase the penalty parameter far too quickly, causing the subproblems to become highly ill-conditioned and difficult to solve. We empirically validate this deficiency in Section \ref{sec:detexperiments}, where our method which adaptively adjusts the penalty parameter significantly outperforms Augmented Lagrangian methods with fixed schedules.}

\mo{The second change, that of a decoupled dual stepsize, is arguably the one of much greater theoretical consequence. This change is made exclusively to use a sufficiently short dual stepsize to provide a global upper bound on the norm of the dual variables \cite{Sahin2019}.\footnote{Some papers \cite{Li2023} allow the upper bound on the norm of the dual variables to increase at a $\mathcal{O}(k)$ rate. Our method and analysis can be easily extended to this case, however we stick with a fixed upper bound for clarity of presentation.} In contrast, popular Augmented Lagrangian algorithms simply impose a bound on the dual variables by projecting them onto a norm ball of a user-specified radius \cite{Andreani2008}. We adopt this projection based approach, which is sufficient for deriving complexity guarantees. One driving factor for preferring the penalty parameter as the dual stepsize as opposed to a short dual stepsize comes from the derivation of the Augmented Lagrangian method. It is well known that the Augmented Lagrangian method is equivalent to applying the proximal point method to the dual problem \cite{rockafellar1976augmented}. However, once the penalty parameter is decoupled from the dual stepsize, this equivalence no longer holds. In other words, any Augmented Lagrangian method that uses an alternative dual stepsize is \textit{not proximal point on the dual.}\footnote{Of course, the method here only satisfies this interpretation on iterations on which the projection is not triggered.}}

\mo{Taken together, these changes suggest that most Augmented Lagrangian methods with complexity guarantees are much closer in spirit and theory to pure penalty approaches which suffer from severely ill-conditioned subproblems. Indeed, the standard complexity analysis relies on simply bounding away the contribution of the dual variables and showing convergence via a sufficiently large penalty parameter, exactly as is done in the analysis of pure penalty methods \cite{Sahin2019, lin2022complexity}. Some exceptions exist; however, these results rely on (near) initial feasibility conditions \cite{Xie2019}. One major exception is the analysis of ALGENCAN, which obtains a complexity result under the condition that the adaptively updated penalty parameter remains bounded \cite{Birgin2020}. Our proposed algorithm is heavily inspired by ALGENCAN and we extend their analysis to prove that a complexity result always holds; either by their analysis when the penalty parameter remains below a threshold dependent on our desired accuracy or by an analysis akin to \cite{Sahin2019} when the penalty parameter grows beyond this threshold. In this way, we combine these techniques to prove that our adaptive method obtains the same worst-case complexity as that of the non-adaptive methods, regardless of the behavior of the penalty parameter. }

\textbf{Contributions}:
We \mo{propose an adaptive  Augmented Lagrangian method which is closely based on ALGENCAN \cite{Andreani2008}}. In the deterministic setting, under standard smoothness and a mild regularity condition, we establish an iteration bound that matches the best known worst-case complexity (up to logarithmic factors), \mo{regardless of the behavior of the penalty parameter}. For stochastic objectives and/or stochastic constraints, we \mo{combine our adaptive Augmented Lagrangian method with a simple stochastic inner loop and provide complexity results that hold with high probability} under natural oracle assumptions. \mo{We theoretically prove that the cost of this adaptivity is only logarithmic in the relevant factors and thus our methods are less reliant on accurate parameter settings when compared with prior work.} Extensive experiments on PyCUTEst and a Neyman–Pearson classification task corroborate the advantages of the proposed update scheme over short-step and non-adaptive baselines, \mo{as well as the value of an adaptive inner loop in stochastic settings.}

\section{Related Work}
One classical approach that can effectively solve problem $\left(\ref{eq:probdef}\right)$ is the quadratic penalty method \cite{kong2019complexity, lan2013iteration, lin2022complexity, luenberger1984linear, Nocedal2006} , which iteratively solves \mo{a sequence of unconstrained subproblems involving the original} objective plus a quadratic term for the constraint violation, \mo{scaled by a chosen penalty parameter}. \mo{In \cite{lan2013iteration}, Lan and Monteiro established a worst-case complexity result of roughly $\tilde{\mathcal{O}}(\epsilon^{-1})$ for a first-order quadratic penalty method for solving convex problems.} \mo{In the nonconvex setting,} Kong et al. \cite{kong2019complexity} applied an inexact proximal point method to solve the subproblems and obtained an approximate stationary point in at most $O(\epsilon^{-3})$ \mo{gradient evaluations}. This result was improved by Lin et al. \cite{lin2022complexity} to $O(\epsilon^{-\frac{5}{2}})$ \mo{when the constraints are convex and Slater's condition holds}.

While simple, \mo{pure penalty methods commonly suffer from highly ill-conditioned subproblems, as the penalty parameter must become very large to drive the algorithm towards feasibility} \cite{Nocedal2006}. The Augmented Lagrangian Method, originally introduced by Hestenes \cite{Hestenes1969} and Powell \cite{Powell1969}, mitigates this difficulty \mo{through the addition of Lagrange multipliers to help drive the method towards feasible solutions}. Hong \cite{hong2016decomposing} proposed a proximal primal dual algorithm (Prox-PDA) for smooth, nonconvex problems with linear constraints and showed the convergence to first-order  stationary points with a sublinear rate. Lan and Monteiro \cite{Lan2016} investigated the iteration complexity of a first-order ALM \mo{in the convex setting and proved} a complexity result of $O(\epsilon^{-1})$ \mo{gradient evaluations}. Similarly, Xu \cite{Xu2021} analyzed the convergence properties of a first-order ALM \mo{and proved} an improved complexity result of $O(\epsilon^{-\frac{1}{2}}|\log\epsilon|)$ \mo{when the problem is strongly convex}. The same rate was also obtained by Li et al. \cite{Li2020} \mo{when strong convexity holds}, and, \mo{in the nonconvex setting}, a complexity result \mo{of} $O(\epsilon^{-\frac{5}{2}}|\log\epsilon|)$ \mo{was proven under} Slater’s condition.

In nonconvex settings, both stochastic and deterministic methods \mo{have been} studied extensively. Table \ref{table-comparison} summarizes a \mo{number of relevant} works \mo{with} their assumptions and complexity results. \mo{In \cite{Birgin2020}, the authors study the complexity of the ALGENCAN method \cite{Andreani2008} and demonstrate an outer-iteration worst-case complexity result of $\tilde{\mathcal{O}}(\log(\epsilon^{-1}))$ under the assumption that the penalty parameter is bounded above by a constant. Our proposed algorithm is largely inspired by ALGENCAN and we demonstrate that one can obtain an outer iteration complexity of $\tilde{\mathcal{O}}(\epsilon^{-1})$ when the penalty parameter does not obey such a bound.} Sahin et al. \cite{Sahin2019} proposed an ALM framework with a \mo{short dual stepsize, which guarantees uniformly bounded dual variables}. They obtained a complexity result of $\tilde{O}(\epsilon^{-4})$ \mo{to first-order stationary points}, and $\tilde{O}(\epsilon^{-5})$ \mo{to second-order stationary points}. Li et al. \cite{Li2021} later improved this result by proposing an inexact proximal-point method (IPPM) and obtained a \mo{complexity of $\tilde{O}(\epsilon^{-\frac{5}{2}})$ for nonconvex objectives} with \mo{linear} equality constraints and $\tilde{O}(\epsilon^{-3})$ when \mo{the constraints are nonconvex}. \mo{Similarly to \cite{Sahin2019}, a short dual step size is imposed to bound the dual variables}. Xie and Wright \cite{Xie2019} analyzed a proximal ALM for nonconvex equality constrained problems and proved a worst-case outer-iteration complexity of \mo{$\tilde{O}(\epsilon^{-(2-\eta)})$ for $\eta \in [0,2]$, where $
\eta$ captures the proximal term and penalty growth trade-off}.

When the objective function is stochastic and the constraints are deterministic, Lu et al. \cite{Lu2024} proposed a single loop variance reduced scheme and obtained a complexity of $\tilde{O}(\epsilon^{-4})$ under \mo{a mean squared smoothness condition}. Bollapragada et al. \cite{Bollapragada2023} introduced an adaptive sampling technique and showed a \mo{sample complexity} of $O(\epsilon^{-2})$ for specific choice of penalty parameter and when the objective function is convex \mo{and the constraints are linear equalities}. Li et al. \cite{Li2023} studied stochastic objective function with stochastic constraints using a momentum-based variance-reduced method and \mo{proved} a convergence result of $O(\epsilon^{-5})$ for nonconvex functions.

For our proposed method, we can show that under the same assumption, the convergence property depends on the inner-solver and we can still match the best-known complexity result for both deterministic and stochastic setting.

\begin{table}[h!]

\centering
\small
\begin{tabularx}{\textwidth}{@{}L{0.20\textwidth} C{0.17\textwidth} C{0.17\textwidth} C{0.16\textwidth} R{0.20\textwidth}@{}}
\toprule
\textbf{Method} & \textbf{Objective} & \textbf{Constraint} & \textbf{Assumptions} & \textbf{Complexity} \\
\midrule

{iALM \cite{Sahin2019}}
  & Deterministic & Deterministic & 1, 2 & $\widetilde{\mathcal{O}}(\epsilon^{-4})$ \\
\midrule

{iPPM \cite{Li2021}}
  & Deterministic & Deterministic & 1, 2 & $\widetilde{\mathcal{O}}(\epsilon^{-3})$ \\
\midrule

{ALGENCAN \cite{Birgin2020}}
  & Deterministic & Deterministic & 1, 2
  & \makecell[r]{Outer loop\\ $\mathcal{O}(\log(\epsilon^{-1}))$} \\
\midrule

\makecell[l]{Stoc-iALM \cite{Li2023} }
  & Stochastic & Stochastic & 1, 2, 3, 4, 5 & $\widetilde{\mathcal{O}}(\epsilon^{-5})$ \\
\addlinespace[3pt] 

\midrule

\makecell[l]{\textcolor{red}{\textbf{Proposed}}}
  & \makecell[c]{Deterministic}
  & \makecell[c]{Deterministic}
  & \makecell[c]{1, 2}
  & \makecell[r]{$\widetilde{\mathcal{O}}(\epsilon^{-3})$} \\

\makecell[l]{\\ \\ }
  & \makecell[c]{Stochastic \\Stochastic}
  & \makecell[c]{Deterministic \\Stochastic}
  & \makecell[c]{1, 2, 3, 4 \\1, 2, 3, 4}
  & \makecell[r]{$\widetilde{\mathcal{O}}(\epsilon^{-6})$\\$\widetilde{\mathcal{O}}(\epsilon^{-7})$} \\

\makecell[l]{\\}
  & \makecell[c]{Stochastic\\Stochastic}
  & \makecell[c]{Deterministic\\Stochastic}
  & \makecell[c]{1, 2, 3, 4, 5\\1, 2, 3, 4, 5}
  & \makecell[r]{$\widetilde{\mathcal{O}}(\epsilon^{-4})$\\$\widetilde{\mathcal{O}}(\epsilon^{-5})$} \\

\bottomrule

\end{tabularx}
\caption{Comparison of major related methods under different assumptions and settings. Assumptions are listed and discussed further in the following section.}
\label{table-comparison}
\end{table}

\section{Preliminaries}

\subsection{Notation}
We use $\|.\|$ to denote the Euclidean norm. For the convex function $g : \mathbb{R}^n \rightarrow \mathbb{R}\cup \{\infty\}$, the subdifferential set at a point $x$ is denoted by $\partial g(x)$. The distance function from a vector $x$ to a set $\mathcal{X}$ is denoted by $\text{dist}(x,\mathcal{X}) = \displaystyle \min_{z\in \mathcal{X}} \|x-z\|$. For an operator $c : \mathbb{R}^n \rightarrow \mathbb{R}^m$, $J(x) \in \mathbb{R}^{m\times n}$ denotes the Jacobian of $c$, where the $i^\text{th}$ row of $J(x)$ is the vector $\nabla c_i(x)\in \mathbb{R}^n$. For integers $k_1 \leq k_2$, we use the notation $[k_1 : k_2] = \{k_1, ..., k_2\}$.

\subsection{Assumptions}
Next, we present our assumptions. The first two assumptions will be used throughout the paper while the latter three are specific to the stochastic setting. 

\begin{assumption}[Smoothness and Boundedness] \label{assum:smoothness}
We assume that $f : \mathbb{R}^n \rightarrow \mathbb{R}$ and $c : \mathbb{R}^n \rightarrow \mathbb{R}^m$,
are smooth and bounded, i.e., there exists $L_f, L_J, L_c \geq 0$ such that
\begin{equation*}
    \|\nabla f(x) - \nabla f(v)\| \leq L_f \|x-v\|, \quad \|J(x) - J(v)\| \leq L_J \|x-v\|,\quad \|c(x) - c(v)\| \leq L_c \|x-v\| \quad   \forall x, v \in \mathbb{R}^n,
\end{equation*}
and $\lambda_f', \lambda_J' \geq 0$ such that
\begin{equation*}
    \|\nabla f(x)\| \leq \lambda_f', \quad \quad \|J(x)\| \leq \lambda_J', \quad   \forall x \in \mathbb{R}^n.
\end{equation*}
\mo{In addition, we assume that $f(x)$ and $g(x)$ are lower bounded by $f_{\min}$ and $g_{\min}$ for all $x \in \mathbb{R}^n$.}
\end{assumption}

This assumption guarantees that the \mo{gradient of the objective}, \mo{the constraints} and \mo{the Jacobian of the constraints} are Lipschitz continuous, \mo{which is a very standard assumption in the literature}.

\begin{assumption}[Regularity] \label{assum:regularity}
For all iterates $k$ of our algorithm, we assume that
\begin{equation} \label{eq:regularity}
    \nu\|c(x_{k+1})\| \leq \dist (-J(x_{k+1})^T c(x_{k+1}), \frac{\partial g(x_{k+1})}{\beta_k})
\end{equation}
for some $\nu > 0$.
\end{assumption}
The second assumption is also relatively standard, \mo{and has been used in a number of recent works on Augmented Lagrangian methods \cite{Sahin2019,Li2021,Li2023}. In addition, in the case where $g(x) = 0$, this condition is equivalent to assuming the Polyak-\Lo ojasiewicz condition on the nonlinear least-squares feasibility problem, $\|c(x)\|^2 = 0$, and always holds when the linearly independent constraint qualification (LICQ) holds at each iteration.} Thus, one can view this assumption as a constraint qualification which prevents degeneracy.

Next, we switch to stochastic setting, where we assume access to appropriate stochastic oracles, which we denote with a slight abuse of notation.

\begin{assumption}[Stochastic Oracle] \label{assum:stochastic_first_oracle}
For the stochastic version of problem (\ref{eq:probdef}), a stochastic first-order oracle can be accessed. That is, for any $x \in \dom(g)$, the oracle can obtain a sample $\xi$ and return $(\nabla f(x, \xi), c(x, \xi), J(x, \xi))$.
\end{assumption}

\mo{Now, we consider two standard assumptions on the stochastic oracles.}

\begin{assumption}[Unbiasedness and Bounded Variance] \label{assum:Unbiasedness_and_Bounded_Variance}
For any $x \in \dom(g)$, the \mo{stochastic gradient estimates satisfy,}
\begin{equation*} 
    \mathbb{E}_\xi[\nabla f(x, \xi)] = \nabla f(x), \quad \mathbb{E}_\xi[\|\nabla f(x, \xi) - \nabla f(x)\|^2] \leq \sigma_g^2,
\end{equation*}
for some $\sigma_f > 0$. \mo{Furthermore, there exists $\sigma_c \geq 0$, such that,}
\begin{align*}
&\mathbb{E}_\xi[c(x, \xi)] = c(x), \\
&\mathbb{E}_\xi[J(x, \xi)] = J(x), \\
&\mathbb{E}_\xi[\|J(x, \xi) - J(x)\|^2] \leq \sigma_c^2; \\
&\mathbb{E}_{\xi_1,\xi_2 }[\|J(x, \xi_1)^T c(x, \xi_2) - J(x)^T c(x)\|^2] \leq \sigma_c^2,
\end{align*}
where $\xi_1$ and $\xi_2$ are independent and follow the same distribution as $\xi$. 
\end{assumption}

\mo{We note here that we intentionally consider the possibility that $\sigma_c = 0$, which corresponds to the case where the objective is stochastic but the constraints remain deterministic, while not requiring additional analysis specific to this setting.}

\mo{Finally, we occasionally consider a stronger condition on the stochastic gradient and Jacobian estimates, in which they are sufficiently smooth functions for any fixed sample.}

\begin{assumption}[Mean-squared Smoothness] \label{assum:Mean-squared_Smoothness}
For all $x,v \in \dom(g)$, \mo{the stochastic gradient estimates satisfy}:
\begin{equation*}
    \mathbb{E}_\xi[\|\nabla f(x, \xi) - \nabla f(v, \xi)\|^2] \leq L_f^2\|x-v\|^2,
\end{equation*}
for some $L_f > 0$. \mo{Furthermore,}
\begin{align*}
    &\mathbb{E}_\xi[\| J(x, \xi) - J(v, \xi)\|^2] \leq L_J^2\|x-v\|^2 \\
    &\mathbb{E}_{\xi_1,\xi_2 }[\|J(x, \xi_1)^T c(x, \xi_2) - J(v, \xi_1)^T c(v, \xi_2)\|^2] \leq L_J^2\|x-v\|^2,
\end{align*}
where $L_J > 0$ and $\xi_1$ and $\xi_2$ are independent and follow the same distribution as $\xi$. 
\end{assumption}

This last assumption extends smoothness to the stochastic case by bounding the mean-squared variation of the relevant derivatives along sample paths. Notice that this assumption is more restrictive than that of Assumption \ref{assum:Unbiasedness_and_Bounded_Variance}. \mo{This assumption is necessary for algorithms based on variance reduction techniques, which in turn achieve stronger complexity guarantees.}

\section{Algorithm}

\mo{In this section, we present our proposed adaptive Augmented Lagrangian algorithm as well as a simple inner loop procedure for the stochastic setting.}

\subsection{Main Algorithm}

\begin{algorithm}
\caption{Inexact Augmented Lagrangian Algorithm}\label{alg:al}
\begin{algorithmic}[1]
\Require $\epsilon\in \mathbb{R}_{>0}$; $\beta_1 \in \mathbb{R}_{>0}$; $x_1 \in \mathbb{R}^n$; $y_1 \in \mathbb{R}^m$; $y_{\max} \in \mathbb{R}_{>0}$; $\gamma > 1$; $\tau \in (0,1)$; $\{\eta_k'\}_{k\geq 1}$ a decreasing, positive sequence that converging to $0$.
\bigskip
\For{$k = 1,2,\dots$}
\State $\eta_{k} = \min\left\{\frac{1}{\beta_k}, \eta_{k}'\right\}$.
\smallskip
\State Find $x_{k+1}$ as an approximate solution to 
\[
\underset{x \in \mathbb{R}^n}{\min} \ \mathcal{L}_{\beta_k}(x, y_k) + g(x) \quad \text{such that} \quad \dist\left(-\nabla_x \mathcal{L}_{\beta_k}(x_{k+1}, y_k), \partial g(x_{k+1})\right) \leq \eta_{k}.
\]
\State Compute $\hat{y}_{k+1} = y_k + \beta_{k}c(x_{k+1})$; Quit with $x_{k+1}$ and $\hat{y}_{k+1}$ when
\[
\dist \left( -\nabla_x \mathcal{L}(x_{k+1}, \hat{y}_{k+1}), \partial g(x_{k+1}) \right) + \|c(x_{k+1})\| \leq \epsilon;
\] 

\State If $\|c(x_{k+1})\| \leq \tau \|c(x_{k})\|$, then $\beta_{k+1} = \beta_k$; otherwise, $\beta_{k+1} = \gamma \beta_k$.
\smallskip
\State $y_{k+1} = \text{proj}_{[-y_{\max}, y_{\max}]}(\hat{y}_{k+1}); \quad\ k = k + 1$.
\EndFor
\end{algorithmic}
\end{algorithm}

\mo{Our main algorithm is presented in Algorithm \ref{alg:al}, which follows the general structure of the classical Augmented Lagrangian framework with a few modifications. In general, this structure is largely similar to that of ALGENCAN \cite{Birgin2020}, except that explicit inequality constraints are replaced with the addition of the nonsmooth term $g$, which allows for the possibility of general inequality constraints via a slack variable reformulation.}

\mo{At each iteration, we set the tolerance for the inner solver, $\eta_{k}$, as
\begin{equation*}
    \eta_{k} = \min\left\{\frac{1}{\beta_k},\, \eta_k'\right\},
\end{equation*}
which incorporates the forcing sequence $\eta_k'$, which can be chosen to guarantee that the solution to the inner solver is sufficiently accurate, regardless of the size of $\beta_k$. This tolerance is then used for the inexact minimization to find $x_{k+1}$ satisfying
\begin{equation*}
    \operatorname{dist}\!\left(-\nabla_x \mathcal{L}_{\beta_k}(x_{k+1}, y_k),\, \partial g(x_{k+1})\right) \le \eta_k.
\end{equation*}}

\mo{The dual multiplier is computed in the standard manner, 
\begin{equation*}
    \hat{y}_{k+1} = y_k + \beta_k c(x_{k+1}),
\end{equation*}
with the penalty parameter $\beta_k$ serving as the dual stepsize. For the purposes of obtaining a complexity result, we also project this update onto the $\ell_\infty$ norm ball of radius $y_{\max}$. \footnote{In the analysis, we rely on a bound on $\|y\|_2$, which implicitly includes a $\sqrt{m}$ factor due to equivalence of norms. This could be easily avoided by projecting onto the $\ell_2$ norm ball instead, but we choose the $\ell_\infty$ ball as it is used in prior work \cite{Birgin2020}.}}

The penalty parameter is updated adaptively according to the constraint violation:
\[
\beta_{k+1} =
\begin{cases}
\beta_k, & \text{if } \|c(x_{k+1})\| \le \tau \|c(x_k)\|\\[3pt]
\gamma \beta_k, & \text{otherwise}
\end{cases}
\]
where $0 < \tau < 1$ and $\gamma > 1$. This mechanism increases $\beta_k$ only when the constraint violation fails to decrease appropriately, preventing over-penalization and maintaining well-conditioned subproblems.

\subsection{Inner Loop Solver for Stochastic Problems}

\begin{algorithm}
\caption{Inner Loop Solver using Stochastic Methods}
\label{alg:innersolve}
\begin{algorithmic}[1]
\Require Fixed $y_k \in \mathbb{R}^m $; starting $x_k\in \mathbb{R}^n $ ; tolerance $\eta_k>0$; initial epochs $T_{k,1}\in\mathbb{N}_{>0}$; $r>1$; stochastic proximal method $\mathcal{M}$.
\For{$p = 1,2,... $}

\State Find $x_{k,p}$ as an approximate solution to $\min_x \mathcal{L}_{\beta_k}(x,y_k) +g(x)$ with $T_{k,p}$ \mo{iterations} of the 
\Statex \hspace{\algorithmicindent} inner-loop using stochastic method $\mathcal M$.

\If $\dist\left(-\nabla_x \mathcal{L}_{\beta_k}(x_{k,p}, y_k), \partial g(x_{k,p})\right) > \eta_k$ \mo{ or $\L_{\beta_k}(x_{k,p}, y_k) + g(x_{k,p}) > \L_{\beta_k}(x_{k}, y_k) + g(x_{k})$}
\State $T_{k,p+1}=\ r T_{k,p}$$.$
\Else
\State \textbf{Break}.
\EndIf
\EndFor
\State Return $x_{k+1} = x_{k,p}$.
\end{algorithmic}
\end{algorithm}

{Algorithm \ref{alg:innersolve} is designed to approximately minimize the augmented Lagrangian subproblem in the inexact ALM framework when the objective or constraint functions involve stochastic components or large-scale data. Unlike prior work, our proposed algorithm enforces that the approximate solution $x_{k+1}$ satisfies the inexactness condition
\[
\operatorname{dist}\!\left(-\nabla_x \mathcal{L}_{\beta_k}(x_{k+1}, y_k),\, \partial g(x_{k+1})\right) \le \eta_k,
\]
as opposed to simply hoping that the parameters are set such that the stochastic method makes the above result hold in expectation. In practice, it is often difficult to determine the correct number of inner-loop epochs required to achieve this level of accuracy, especially in the presence of stochastic noise. Algorithm \ref{alg:innersolve} provides a simple yet effective adaptive mechanism to address this issue. \mo{In Section \ref{subsec:stochasticexperiments}, we validate the efficacy of this approach. Additionally, we require that our solution is monotonic with respect to the previous iterate $x_k$, which enables us to obtain tight bounds on the function value gap at each outer loop iteration, which is critical for our complexity results. This condition allows us to avoid additional unnecessary assumptions, such as boundedness of the domain of $g$, which has been used in prior work \cite{Li2021,Li2023}, but precludes the possibility of important problem classes, such as inequality constrained problems via a slack variable reformulation.}

Algorithm \ref{alg:innersolve} starts from an initial number of epochs $T_{k,0}$ and an initial point $x_k$. At each iteration $p$, it applies a stochastic optimization method $\mathcal{M}$ (such as stochastic gradient descent, SVRG, or momentum-based stochastic methods) to perform $T_{k,p}$ epochs of the chosen algorithm to minimize $\mathcal{L}_{\beta_k}(x, y_k) + g(x)$. Here we also assume that we have access to the full batch quantities. If the output is not sufficiently accurate, then the number of epochs is increased by a factor $r > 1$ and the loop is repeated. In Section \ref{sec:stochasticanalysis}, we show that this algorithm terminates, with high probability, in a comparable number of epochs to standard stochastic methods.

\section{Convergence Analysis}

In this section, we will derive our main results, beginning with a complexity result for Algorithm \ref{alg:al}.

\subsection{Outer-loop Complexity Result}

We start our analysis by establishing an upper bound on the constraint violation across all iterations. 

\begin{lemma}\label{lem:grad_upperbound}
Let Assumption \ref{assum:smoothness} and Assumption \ref{assum:regularity} hold  and let $\bar y_{\max} = \sup\{\|y_k\|_2\}_{k\geq 1}$ \mo{(which is guaranteed to exist by the construction of Algorithm \ref{alg:al})}. Then, we have that: 
\begin{enumerate}
    \item[(i)] $\|c(x_{k+1})\| \leq \frac{\lambda_f' + \lambda_J'\bar y_{\max} + \frac{1}{\beta_k}}{\nu \beta_k}$ for all $k \geq 1$.
    \item[(ii)] There exists $ c_\text{big} := \max\{ \|c(x_1)\|, \frac{\lambda_f' + \lambda_J'\bar y_{\max} + \frac{1}{\beta_1}}{\nu \beta_1}\}$ such that $\|c(x_k)\| \leq c_\text{big}$ for all $k$.
\end{enumerate}
\end{lemma}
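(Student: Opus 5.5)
Part (i) follows by combining the inexactness condition of the subproblem solve with Assumption \ref{assum:regularity}. First, expand the gradient of the augmented Lagrangian:
\[
\nabla_x \mathcal{L}_{\beta_k}(x_{k+1}, y_k) = \nabla f(x_{k+1}) + J(x_{k+1})^T y_k + \beta_k J(x_{k+1})^T c(x_{k+1}).
\]
The termination criterion of the inner solve (line 3 of Algorithm \ref{alg:al}) gives some $s \in \partial g(x_{k+1})$ with $\|\nabla_x \mathcal{L}_{\beta_k}(x_{k+1},y_k) + s\| \le \eta_k \le 1/\beta_k$. Since $\partial g(x_{k+1})$ is nonempty (this follows from the inner condition being satisfiable at a point of $\dom(g)$), $s/\beta_k \in \partial g(x_{k+1})/\beta_k$. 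Dividing by $\beta_k$ and using the triangle inequality, we obtain
\[
\dist\Bigl(-J(x_{k+1})^T c(x_{k+1}), \tfrac{\partial g(x_{k+1})}{\beta_k}\Bigr) \le \Bigl\| J(x_{k+1})^T c(x_{k+1}) + \tfrac{s}{\beta_k}\Bigr\| \le \frac{1}{\beta_k}\Bigl( \|\nabla f(x_{k+1})\| + \|J(x_{k+1})\|\,\|y_k\| + \eta_k\Bigr).
\]

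Next, bound each term with Assumption \ref{assum:smoothness}: $\|\nabla f\| \le \lambda_f'$ and $\|J\| \le \lambda_J'$. By definition, $\|y_k\|\le \bar y_{\max}$; this supremum is finite because every $y_k$ with $k\ge 2$ lies in the $\ell_\infty$ ball of radius $y_{\max}$, so $\|y_k\|_2 \le \sqrt{m}\,y_{\max}$, and $y_1$ is a single fixed vector. Together with $\eta_k \le 1/\beta_k$, the right-hand side is at most $(\lambda_f' + \lambda_J'\bar y_{\max} + 1/\beta_k)/\beta_k$. Assumption \ref{assum:regularity} bounds the left-hand side below by $\nu\|c(x_{k+1})\|$. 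Dividing by $\nu$ yields (i).

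For (ii), the key observation is that $\beta_k$ is nondecreasing: by line 5 of Algorithm \ref{alg:al}, $\beta_{k+1}\in\{\beta_k,\gamma\beta_k\}$ with $\gamma>1$. Hence $\beta_k \ge \beta_1$ for all $k$. The map $\beta \mapsto (\lambda_f' + \lambda_J'\bar y_{\max} + 1/\beta)/(\nu\beta)$ is decreasing in $\beta>0$, so the bound in (i) is at most its value at $\beta_1$ for every $k\ge1$. This covers $\|c(x_k)\|$ for all $k\ge 2$. The case $k=1$ is handled directly by the term $\|c(x_1)\|$ in the maximum, and together these give $\|c(x_k)\|\le c_{\text{big}}$ for all $k$.

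I expect no real obstacle. The only delicate point is the subgradient bookkeeping in the first step: the distance to the scaled set $\partial g(x_{k+1})/\beta_k$ must be correctly related to the unscaled inexactness condition, which amounts to factoring out $\beta_k$ from the distance. A secondary point is confirming that $\bar y_{\max}$ is finite, which the projection step guarantees.
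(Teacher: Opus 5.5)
Your proposal is correct and follows essentially the same argument as the paper: bound the inner stationarity residual with the triangle inequality, rescale by $\beta_k$ to match the regularity condition, invoke Assumption~\ref{assum:regularity}, and use $\beta_k \geq \beta_1$ for part (ii). Your extra bookkeeping just makes explicit what the paper leaves implicit: choosing a subgradient $s$ that attains the distance, checking that $\bar y_{\max}$ is finite, and treating $k=1$ separately via $\|c(x_1)\|$.
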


\begin{proof} 
From step 3 of Algorithm \ref{alg:al}, the stopping criterion for the inner loop subproblem ensures
\begin{equation*}
    \dist (-\nabla_x \mathcal{L}_{\beta_k}(x_{k+1}, y_k), \partial g(x_{k+1})) \leq \eta_k \leq \frac{1}{\beta_k}.
\end{equation*}
By the definition of the gradient of the Augmented Lagrangian function,
\begin{equation*}
    \dist (-\nabla f(x_{k+1}) - J(x_{k+1})^T y_k - \beta_k J(x_{k+1})^T c(x_{k+1}), \partial g(x_{k+1}))  \leq \frac{1}{\beta_k}.    
\end{equation*}
Applying the triangle inequality and rearranging the terms, we have
\begin{equation*}
    \dist (-\beta_k J(x_{k+1})^T c(x_{k+1}), \partial g(x_{k+1}))  \leq \|\nabla f(x_{k+1})\| + \|J(x_{k+1})^T y_k\| +  \frac{1}{\beta_k},
\end{equation*}
and therefore,
\begin{align*}
 \dist (-J(x_{k+1})^T c(x_{k+1}) , \frac{\partial g(x_{k+1})}{\beta_k})  
 &\leq \frac{\|\nabla f(x_{k+1})\| + \|J(x_{k+1})^T y_k\| +  \frac{1}{\beta_k}}{\beta_k} \\
 & \leq \frac{\lambda_f' + \lambda_J'\bar y_{\max} + \frac{1}{\beta_k}}{\beta_k}. 
\end{align*}
By Assumption \ref{assum:regularity}, we have that $\|c(x_{k+1})\| \leq \frac{\lambda_f' + \lambda_J'\bar y_{\max} + \frac{1}{\beta_k}}{\nu \beta_k}$ for all $k \geq 1$. The second result follows by noting that $\beta_k \geq \beta_1$ for all $k$.
\end{proof}

The uniform bound from the previous lemma allows us to control the constraint violation in terms of the penalty parameter. Using this, we can now derive sufficient conditions on the penalty parameter under which Algorithm \ref{alg:al} is guaranteed to terminate with an approximately feasible solution.

\begin{lemma}\label{lem:beta_upperbound5.2}
Let Assumptions \ref{assum:smoothness} and \ref{assum:regularity} hold. If in any iteration $k$, we have \begin{equation*}
    \beta_k \geq \beta_\textrm{upper} := \frac{\nu + \lambda_f' + \lambda_J'\bar y_{\max} + \sqrt{4\epsilon \nu + (\nu + \lambda_f' + \lambda_J'\bar y_{\max})^2}}{2\epsilon \nu},
\end{equation*} then Algorithm \ref{alg:al} terminates at iteration $k+1$ with $x_{k+1}$ and $\hat{y}_{k+1}$ which satisfy the approximate optimality condition \eqref{eq:approximate_optimality_condition}.
\end{lemma}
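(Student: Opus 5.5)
We bound the two terms of the termination test of Algorithm \ref{alg:al} separately at iteration $k$. The stationarity term is controlled by the inner-solver tolerance. The feasibility term is controlled by Lemma \ref{lem:grad_upperbound}(i). We then check that $\beta_k\ge\beta_\textrm{upper}$ makes their sum at most $\epsilon$.

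\textbf{Stationarity term.} Since $\hat y_{k+1}=y_k+\beta_k c(x_{k+1})$, we have
\[
\nabla_x\mathcal{L}(x_{k+1},\hat y_{k+1}) = \nabla f(x_{k+1}) + J(x_{k+1})^T y_k + \beta_k J(x_{k+1})^T c(x_{k+1}) = \nabla_x \mathcal{L}_{\beta_k}(x_{k+1},y_k).
\]
This is the standard identity behind the multiplier update. Step 3 of Algorithm \ref{alg:al} then gives
\[
\dist\left(-\nabla_x\mathcal{L}(x_{k+1},\hat y_{k+1}),\partial g(x_{k+1})\right)\le \eta_k\le \frac{1}{\beta_k}.
\]

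\textbf{Feasibility term.} Lemma \ref{lem:grad_upperbound}(i) gives $\|c(x_{k+1})\|\le \frac{A+1/\beta_k}{\nu\beta_k}$, where $A:=\lambda_f'+\lambda_J'\bar y_{\max}$. Adding the two bounds, the termination quantity is at most
\[
\frac{1}{\beta_k}+\frac{A}{\nu\beta_k}+\frac{1}{\nu\beta_k^2}=\frac{(\nu+A)\beta_k+1}{\nu\beta_k^2}.
\]

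\textbf{Reduction to a quadratic.} It therefore suffices to have $\epsilon\nu\beta^2-(\nu+A)\beta-1\ge 0$ at $\beta=\beta_k$. This quadratic in $\beta$ has a positive leading coefficient and roots
\[
\frac{(\nu+A)\pm\sqrt{(\nu+A)^2+4\epsilon\nu}}{2\epsilon\nu}.
\]
The larger root is exactly $\beta_\textrm{upper}$, and the smaller root is negative. Hence $\beta_k\ge\beta_\textrm{upper}$ implies the inequality, so the quit test in step 4 succeeds and the algorithm terminates with $(x_{k+1},\hat y_{k+1})$ satisfying \eqref{eq:approximate_optimality_condition}.

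The main point of care is the first identity. The test uses the \emph{unprojected} $\hat y_{k+1}$, so no projection error appears. The inner tolerance must also be bounded by $1/\beta_k$ rather than $\eta_k'$, which is exactly what produces the $\nu$ in the linear coefficient $(\nu+A)$ of $\beta_\textrm{upper}$. Everything else is algebra.
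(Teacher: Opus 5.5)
Your proposal is correct and matches the paper's proof step for step. You use the identity $\nabla_x\mathcal{L}_{\beta_k}(x_{k+1},y_k)=\nabla_x\mathcal{L}(x_{k+1},\hat y_{k+1})$ with $\eta_k\le 1/\beta_k$ for the stationarity term, Lemma \ref{lem:grad_upperbound}(i) for the feasibility term, and then solve the resulting quadratic in $\beta_k$, whose larger root is $\beta_{\textrm{upper}}$; you also correctly note that the smaller root is negative, a detail the paper leaves implicit.
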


\begin{proof}

By step 4 of Algorithm \ref{alg:al}, the method terminates when
\begin{equation*}
    \dist (-\nabla_x \mathcal{L}(x_{k+1}, \hat{y}_{k+1}), \partial g(x_{k+1})) + \|c(x_{k+1})\|\leq \epsilon.
\end{equation*}
We consider two terms on the left-hand side of the inequality separately. By Lemma \ref{lem:grad_upperbound} part $(i)$, we have that $\|c(x_{k+1})\| \leq \frac{\lambda_f' + \lambda_J'\bar y_{\max} + \frac{1}{\beta_k}}{\nu \beta_k}$ holds for all $k$. Now, by step 3 of Algorithm \ref{alg:al}, the stopping criterion for the inner loop subproblem is set as
\begin{equation*}
    \dist (-\nabla_x \mathcal{L}_{\beta_k}(x_{k+1}, y_k), \partial g(x_{k+1})) \leq \eta_k.
\end{equation*}
Recalling that $\hat{y}_{k+1} = y_k + \beta_{k}c(x_{k+1})$, we have
\begin{equation*}
    \nabla_x \mathcal{L}_{\beta_k}(x_{k+1}, y_k)  = \nabla f(x_{k+1}) +  J(x_{k+1})^T\hat{y}_{k+1} = \nabla_x \mathcal{L}(x_{k+1}, \hat{y}_{k+1}).
\end{equation*}
Therefore, it follows that
\begin{equation*}
    \dist (-\nabla_x \mathcal{L}(x_{k+1}, \hat{y}_{k+1}), \partial g(x_{k+1})) = \dist (-\nabla_x \mathcal{L}_{\beta_k}(x_{k+1}, y_k), \partial g(x_{k+1})) \leq \eta_k \leq \frac{1}{\beta_k}.
\end{equation*}

Now, combining both quantities in the stopping criteria together, we have that
\begin{equation*}
    \dist (-\nabla_x \mathcal{L}(x_{k+1}, \hat{y}_{k+1}), \partial g(x_{k+1})) + \|c(x_{k+1})\|\leq \frac{1}{\beta_k} + \frac{\lambda_f' + \lambda_J'\bar y_{\max} + \frac{1}{\beta_k}}{\nu \beta_k}.
\end{equation*}

\mo{Thus, Algorithm \ref{alg:al} terminates whenever}
\begin{equation*}
    \frac{1}{\beta_k} + \frac{\lambda_f' + \lambda_J'\bar y_{\max} + \frac{1}{\beta_k}}{\nu \beta_k} \leq \epsilon.
\end{equation*}

Solving this inequality for $\beta_k$ yields a quadratic equation from which we obtain our desired result.
\end{proof}

The previous two lemmas are mild modifications of Theorem 4.1 of \cite{Sahin2019}, and are sufficient to guarantee convergence when $\beta_k$ becomes large. We now combine this with a modification of the argument in \cite{Birgin2020} to derive a complexity result for Algorithm \ref{alg:al}. 

\begin{theorem}\label{thm:complexity_outer5.3} (Outer-loop Complexity)
Let Assumptions \ref{assum:smoothness} and \ref{assum:regularity} hold. Suppose that there exists $N(\epsilon) \in \{1, 2, ...\}$ such that for all $k\geq N(\epsilon)$, we have $\eta_k \leq \frac{\epsilon}{2}$. Then, 

\smallskip
$(i)$ Suppose that $\|c(x_{k+1})\|\leq \tau\|c(x_{k})\|$ happens at least \[\ceil[\Bigg]{\max \Biggl\{N(\epsilon), \frac{\log(\frac{2c_\text{big}}{\epsilon})}{|\log(\tau)|} \Biggl\}}\] consecutive iterations, then we reach the optimality for $c_\text{big}$ defined in the Lemma $\ref{lem:grad_upperbound}$ part $(ii)$;

\smallskip
$(ii)$ Otherwise, it takes at most \[ \ceil[\Bigg]{\max \Biggl\{N(\epsilon), \frac{\log(\frac{2c_\text{big}}{\epsilon})}{|\log(\tau)|} \Biggl\} -1} \times \ceil[\Bigg]{\frac{\log(\frac{\beta_{\textrm{upper}}}{\beta_1})}{\log(\gamma)}} \] iterations to reach condition (\ref{eq:approximate_optimality_condition}) with $x_{k+1}$, $\hat{y}_{k+1}$, and $\beta_{\textrm{upper}}$ from Lemma $\ref{lem:beta_upperbound5.2}$.  
\end{theorem}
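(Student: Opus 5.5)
Let $K := \ceil[\big]{\max\{N(\epsilon), \log(2c_\text{big}/\epsilon)/|\log(\tau)|\}}$. The plan is a dichotomy on the behaviour of the penalty parameter: either the constraint violation keeps contracting long enough that both optimality measures are below $\epsilon/2$, or $\beta_k$ is multiplied by $\gamma$ often enough to exceed $\beta_\textrm{upper}$, at which point Lemma \ref{lem:beta_upperbound5.2} forces termination.

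For part $(i)$, the first step is to suppose the test $\|c(x_{j+1})\|\le\tau\|c(x_j)\|$ succeeds on $K$ consecutive iterations $j=k_0,\dots,k_0+K-1$. Chaining the inequalities and using Lemma \ref{lem:grad_upperbound}(ii) gives
\[
\|c(x_{k_0+K})\|\le \tau^{K}\|c(x_{k_0})\|\le \tau^{K}c_\text{big}\le \frac{\epsilon}{2},
\]
since $K\ge \log(2c_\text{big}/\epsilon)/|\log\tau|$. For the dual part, the identity from the proof of Lemma \ref{lem:beta_upperbound5.2}, $\nabla_x\mathcal L_{\beta_k}(x_{k+1},y_k)=\nabla_x\mathcal L(x_{k+1},\hat y_{k+1})$, gives
\[
\dist\bigl(-\nabla_x\mathcal L(x_{k+1},\hat y_{k+1}),\partial g(x_{k+1})\bigr)\le\eta_k .
\]
Since the last index of the run satisfies $k_0+K-1\ge K\ge N(\epsilon)$ (for $k_0\ge1$), we get $\eta_k\le\epsilon/2$ there. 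Summing the two halves shows the stopping test in step 4 is met at that iteration. One bookkeeping point: the run is counted so that its last index is at least $N(\epsilon)$; I would state it this way, which is the reading under which the claimed count works.

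For part $(ii)$, if no run of $K$ consecutive successes ever occurs, then every block of at most $K-1$ iterations without a penalty increase must be followed by an increase $\beta_{k+1}=\gamma\beta_k$. Hence after $j$ increases we have $\beta_k=\gamma^{j}\beta_1$. With $j=\ceil{\log(\beta_\textrm{upper}/\beta_1)/\log\gamma}$ increases we get $\beta_k\ge\beta_\textrm{upper}$, and Lemma \ref{lem:beta_upperbound5.2} yields termination at the next iteration. Counting iterations, each increase is preceded by at most $K-1$ non-increasing iterations. This gives a total of order $(K-1)\cdot j$, plus the at most $j$ increase steps themselves, which I expect can be absorbed into the stated product since the failing iteration itself ends a block.

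The main obstacle is this counting. The blocks are really ``at most $K-1$ successes plus one failure,'' that is, length at most $K$, so I need to check carefully whether the bound $\ceil{K-1}\times j$ is exact or needs a $+j$ or $+1$. I would first try to align the block accounting with the statement by letting a block be the $K-1$ iterations ending in the failure. If that does not match, I would accept the slightly larger $K\cdot j$ bound, which leaves the complexity order unchanged. A second subtlety is the interplay with $N(\epsilon)$ early on, since a run may start before $\eta_k\le\epsilon/2$. This is handled by including $N(\epsilon)$ in the max defining $K$.
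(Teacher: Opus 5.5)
Your proposal follows the paper's proof. In part $(i)$ you chain the $\tau$-contraction against $c_\text{big}$ and bound the stationarity residual by $\eta_k\le\epsilon/2$ via $\nabla_x\mathcal L_{\beta_k}(x_{k+1},y_k)=\nabla_x\mathcal L(x_{k+1},\hat y_{k+1})$. Your observation that a length-$K$ run ends at an index at least $K\ge N(\epsilon)$ is a cleaner version of the paper's ``$\max\{N(\epsilon),\tau^T c_\text{big}\}\le\epsilon/2$''. In part $(ii)$ you count $\gamma$-increases until $\beta_k\ge\beta_\textrm{upper}$ and then invoke Lemma \ref{lem:beta_upperbound5.2}, exactly as the paper does.

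On the bookkeeping, your suspicion is right, but your hope of absorbing the failing iterations into $(K-1)J$ is not. The paper simply multiplies the maximal success-run length $K-1$ by $J=\lceil\log(\beta_\textrm{upper}/\beta_1)/\log\gamma\rceil$. That count ignores both the $J$ failing iterations and the final terminating iteration. Nothing in the hypotheses rules out $K=2$, $J=1$ with the pattern success, failure, termination. That uses $3$ iterations against a printed bound of $1$. So you should state your block count: at most $KJ+1$ iterations, namely $J$ blocks of at most $K-1$ successes plus one failure, then one terminating iteration. This leaves the $\mathcal O(\log^2(\epsilon^{-1}))$ conclusion unchanged.
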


\begin{proof}
We first consider case $(i)$.
By step 4 of Algorithm \ref{alg:al}, the algorithm terminates when \[ \dist (-\nabla_x \mathcal{L}(x_{k+1}, \hat{y}_{k+1}), \partial g(x_{k+1})) + \|c(x_{k+1})\|\leq \epsilon.\] 
In the proof of Lemma \ref{lem:beta_upperbound5.2}, we obtain a relationship between the gradient of the Augmented Lagrangian function and the Lagrangian function. Combining the relationship with the assumption on $N(\epsilon)$, we have that \[ \dist (-\nabla_x \mathcal{L}_{\beta_k}(x_{k+1}, y_{k}), \partial g(x_{k+1})) = \dist (-\nabla_x \mathcal{L}(x_{k+1}, \hat{y}_{k+1}), \partial g(x_{k+1})) \leq \eta_k \leq \frac{\epsilon}{2}\] for some $x_{k+1}$ and $\hat{y}_{k+1}$. 

Meanwhile, suppose $\|c(x_{k+1})\|\leq \tau\|c(x_{k})\|$ happens for at least $T$ consecutive iterations. By Lemma \ref{lem:grad_upperbound}, we have that $\|c(x_{k+1})\| \leq c_\text{big}$ holds for all $k$. 

Thus, the algorithm terminates once when $\max\{N(\epsilon), \tau^T c_\text{big}\} \leq \frac{\epsilon}{2}$. Since $0 < \tau < 1 $, taking the natural logarithm from the second portion inside the max function yields \[ T \geq \frac{\log(\frac{2c_\text{big}}{\epsilon})}{|\log(\tau)|}.\]

We now consider case $(ii)$. In the worst-case scenario, $\|c(x_{k+1})\|\leq \tau\|c(x_{k})\|$ happens for exactly
\begin{equation} \label{eq:numconsec}
    \max \Biggl\{N(\epsilon), \frac{\log(\frac{2c_\text{big}}{\epsilon})}{|\log(\tau)|} \Biggl\} -1
\end{equation}
consecutive iterations. 

Under such circumstances, it is possible that we fail to reach the optimality from consecutive reduction in constraint violation. \mo{However, by Lemma \ref{lem:beta_upperbound5.2}, once $\beta_k$ exceeds $\beta_{upper}$, the algorithm terminates at $k+1$. In addition, by step 5 of Algorithm \ref{alg:al}, $\beta_{k+1} = \gamma^j \beta_1$, where $j$ is the number of iterations for which $\|c(x_{k+1})\| > \tau \|c(x_k)\|$. Thus, we have that $j \leq \frac{\log(\frac{\beta_{upper}}{\beta_1})}{\log(\gamma)}$. Combining this with \eqref{eq:numconsec} yields the result.} 

\end{proof}

The previous theorem \mo{demonstrates two modes of convergence}. First, if the \mo{constraint violation decreases sufficiently quickly}, then the algorithm terminates \mo{without needing to excessively increase the penalty parameter}. However, \mo{when the constraint violation is not well-behaved, the only option available is to increase penalty parameter significantly, which in turn yields convergence. We note that this behavior is reminiscent to classical convergence results for Augmented Lagrangian methods e.g. \cite[Proposition 4.2.3]{bertsekas1999nonlinear}, in which convergence can be driven \textbf{either} by good dual variable estimates or a sufficiently large penalty parameter. Most prior work (e.g. \cite{Sahin2019,Li2021,Li2023}) only focus on a single mode of convergence, that is, when the penalty parameter is sufficiently large. In the worst case, the outer loop requires at most $O(\log^2(\epsilon^{-1}))$ iterations to reach approximate feasibility and stationarity. This logarithmic dependence is a mild additional cost imposed by the adaptive nature of the algorithm.}

\subsection{Deterministic Complexity Result}

\mo{In order to prove a complete complexity result, we now consider the cost of solving the Augmented Lagrangian subproblem at each iteration. To this end, we prove the following result about the gradient of the Augmented Lagrangian.} 

\begin{lemma}\label{lem:Lipschitz5.4}
\mo{The gradient of the Augmented Lagrangian function, $\nabla_x \mathcal{L}_{\beta_k}(x,y)$, is Lipschitz continuous in $x$ with parameter $L_k = L_f + \bar{y}_{\max} L_J + \beta_k L_J c_{\text{big}} + \beta_k \lambda_J' L_c$,} where, $c_{\text{big}}$ and $\bar{y}_{\max}$ are defined in Lemma \ref{lem:grad_upperbound}. 
\end{lemma}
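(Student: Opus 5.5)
We write out the gradient explicitly,
\[
\nabla_x \mathcal{L}_{\beta_k}(x,y) = \nabla f(x) + J(x)^T y + \beta_k J(x)^T c(x),
\]
and bound the difference at two points $x,v$ term by term with the triangle inequality. The first two pieces are immediate from Assumption \ref{assum:smoothness}. We have $\|\nabla f(x)-\nabla f(v)\|\le L_f\|x-v\|$. For the multiplier term, $\|(J(x)-J(v))^T y\|\le \|J(x)-J(v)\|\,\|y\|\le L_J \bar y_{\max}\|x-v\|$, using that the multipliers fed to the subproblem are the projected $y_k$ with $\|y_k\|_2\le \bar y_{\max}$.

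For the penalty term we add and subtract a cross term:
\[
J(x)^Tc(x)-J(v)^Tc(v) = (J(x)-J(v))^T c(x) + J(v)^T(c(x)-c(v)).
\]
The second piece is bounded by $\lambda_J' L_c\|x-v\|$ using $\|J(v)\|\le\lambda_J'$ and the Lipschitz continuity of $c$. The first piece is bounded by $L_J\|c(x)\|\,\|x-v\|$. Multiplying both by $\beta_k$ gives the last two terms of $L_k$, provided we can replace $\|c(x)\|$ by $c_{\text{big}}$.

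This last replacement is the main obstacle. Lemma \ref{lem:grad_upperbound}(ii) only bounds $\|c(x_k)\|$ at the iterates, not at arbitrary $x$. I would handle it in one of two ways.

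\begin{enumerate}
\item Interpret the Lipschitz statement as holding on the region the inner solver actually visits, namely points where $\|c(x)\|\le c_{\text{big}}$. This is natural, since the analysis only uses $L_k$ along descent steps of the subproblem started from $x_k$. With the monotonicity built into Algorithm \ref{alg:innersolve}, these points stay in a sublevel set.
\item Choose the ordering of the cross term so that $\|c(\cdot)\|$ is evaluated at whichever point has the controlled constraint value, for example an iterate.
\end{enumerate}

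I would state explicitly that $c_{\text{big}}$ serves as a bound on $\|c\|$ over the relevant set, and cite Lemma \ref{lem:grad_upperbound}(ii) for it. Summing the four contributions then yields $L_k = L_f + \bar y_{\max}L_J + \beta_k L_J c_{\text{big}} + \beta_k\lambda_J' L_c$.
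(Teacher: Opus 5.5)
Your decomposition is exactly the paper's. You apply the triangle inequality to the three pieces of $\nabla_x\mathcal{L}_{\beta_k}$ and then split $(J(x)-J(v))^Tc(x)+J(v)^T(c(x)-c(v))$. The paper then writes $\|c(x)\|\le c_{\text{big}}$ ``by Lemma~\ref{lem:grad_upperbound}'' for arbitrary $x$, which is the step you single out. You are right that it is unjustified: Lemma~\ref{lem:grad_upperbound}(ii) controls $\|c\|$ only at the outer iterates. In fact the global claim can fail under Assumption~\ref{assum:smoothness}. Take $n=m=1$, $f\equiv 0$, $g\equiv 0$, $c(x)=x+\sin x$; then $|J|\le 2$, $L_J=1$, $L_c=2$, yet
\[
\tfrac{d}{dx}\bigl[J(x)c(x)\bigr]=(1+\cos x)^2-\sin x\,(x+\sin x)
\]
equals $-(\pi/2+2\pi j)$ at $x=\pi/2+2\pi j$. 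So $\nabla_x\mathcal{L}_{\beta}(\cdot,y)$ is not Lipschitz for any $\beta>0$, and Assumption~\ref{assum:regularity}, which only constrains iterates, cannot rescue a global statement.

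Neither of your remedies closes the gap, so your argument has the same hole as the paper's: citing Lemma~\ref{lem:grad_upperbound}(ii) ``for the relevant set'' is the unjustified step again.

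\textbf{Remedy 2} fails because the Lipschitz estimate is used at \emph{inner} iterates. These are $x$, $\psi_{\alpha_k}(x)$, and the segment between them in Lemma~\ref{lem:proximal_grad5.5}, and none of them is an outer iterate.

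\textbf{Remedy 1} states a true restricted lemma, but nothing shows the inner iterates satisfy $\|c(x)\|\le c_{\text{big}}$. Monotonicity only places them in $\{\phi_k\le\phi_k(x_k)\}$, where you get $\frac{\beta_k}{2}\|c(x)\|^2-\bar y_{\max}\|c(x)\|\le\phi_k(x_k)-f_{\min}-g_{\min}$, and that does not yield $c_{\text{big}}$. There are further problems:
\begin{itemize}
\item Monotonicity of proximal gradient is itself derived from the descent lemma you are trying to justify, so the argument is circular.
\item Algorithm~\ref{alg:innersolve} checks monotonicity only for its returned point, and stochastic inner iterates are not monotone.
\item The weak-convexity result invoked in Lemma~\ref{lem:aug_lag_grad_upperbound_stochastic5.8} needs the Lipschitz bound on all of $\dom(g)$.
\end{itemize}

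\textbf{The clean fix} is an explicit hypothesis such as $\sup_{x\in\dom(g)}\|c(x)\|\le\bar c$, which is a natural reading of ``bounded'' in Assumption~\ref{assum:smoothness}. Put $\bar c$ in place of $c_{\text{big}}$ in $L_k$. Since $\dom(g)$ is convex and contains every proximal iterate, this covers every use of the lemma.

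For the deterministic solver alone, your two ideas can be combined rigorously. Orient the split so that $\|c\|$ is evaluated at the base point $x$; this is all the descent lemma and Lemma~\ref{lem:proximal_grad5.5}(iii) require. Then bound $\|c\|$ at each inner iterate inductively via the sublevel-set inequality above. This gives a different constant in place of $c_{\text{big}}$, not the one stated.
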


\begin{proof}
By the triangle inequality, we have 
\begin{equation*}
 \|\nabla \mathcal{L}_{\beta_k}(x,y) - \nabla \mathcal{L}_{\beta_k}(v,y)\| \leq \|\nabla f(x) - \nabla f(v)\| + \bar{y}_{\max} \|J(x) - J(v)\| + \beta_k\|J(x)^T c(x) - J(v)^T c(v)\|.
\end{equation*}
We arrive at the result for the first two terms by applying Assumption \ref{assum:smoothness} directly. For the final term in the last inequality, we use Assumption \ref{assum:smoothness} and the upper bound on $\|c(x)\|$ from Lemma \ref{lem:grad_upperbound} to obtain 
\begin{align*}
 \beta_k\|J(x)c(x) - J(v)c(v)\|  
 & \leq \beta_k\|J(x) - J(v)\|\|c(x)\|+ \beta_k\|J(v)\|\|c(x)-c(v)\|\\
 & \leq (\beta_k L_J c_{\text{big}} + \beta_k \lambda_J' L_c)\|x-v\|,
\end{align*}
which proves the result.
\end{proof}

The next lemma plays a key role in connecting proximal gradient updates with the stopping criterion of Algorithm \ref{alg:al}. The proof is a minor twist on \cite[Lemma 9.5]{wright2022optimization}, in the sense that we do not assume convexity of our smooth function.

\begin{lemma}\label{lem:proximal_grad5.5}
Fix an outer iteration $k$. Let the inner subproblem at iteration $k$ be $\phi_k(x)=\mathcal{L}_{\beta_k}(x,y_k)+g(x).$ For any $\alpha_k>0$, define the proximal operator of the nonsmooth portion $g(x)$ as \[\operatorname{prox}_{\alpha_k g}(x) = \displaystyle\arg\min_u\left\{g(u)+\frac{1}{2\alpha_k}\|u-x\|^2\right\};\]

then for any $\alpha_k>0$, let the proximal gradient step be \[ \psi_{\alpha_k}(x)=\operatorname{prox}_{\alpha_k g}\bigl(x-\alpha_k \nabla_x \mathcal{L}_{\beta_k}(x,y_k)\bigr); \]
and the proximal gradient mapping as \[ G_{\alpha_k}(x)=\frac{1}{\alpha_k}\bigl(x-\psi_{\alpha_k}(x)\bigr). \]
Then the following hold:
\begin{enumerate}
    \item[(i)] 
    \[ G_{\alpha_k}(x)-\nabla_x\mathcal{L}_{\beta_k}(x,y_k)\in \partial g\bigl(\psi_{\alpha_k}(x)\bigr). \]

    \item[(ii)] For any $z\in \dom(g)$ and $\alpha_k\in(0,\frac{1}{L_k}]$, \[ \phi_k\bigl(\psi_{\alpha_k}(x)\bigr) \le \mathcal{L}_{\beta_k}(x,y_k)+g(z) -\langle \nabla_x\mathcal{L}_{\beta_k}(x,y_k),x-z\rangle +\langle G_{\alpha_k}(x),x-z\rangle -\frac{\alpha_k}{2}\|G_{\alpha_k}(x)\|^2.\]

    \item[(iii)] Moreover, when $\alpha_k=\frac{1}{L_k}$, the stationarity residual at the proximal gradient iterate  $\psi_{\alpha_k}(x)$ is bounded by the gradient mapping; namely, \[
    \operatorname{dist}\!\left( -\nabla_x\mathcal{L}_{\beta_k}\bigl(\psi_{\alpha_k}(x),y_k\bigr), \partial g\bigl(\psi_{\alpha_k}(x)\bigr) \right) \le
    2\|G_{\alpha_k}(x)\|.\]
    Consequently, given the inner subproblem tolerance $\eta_k$, if
    $\|G_{\alpha_k}(x)\|\le \frac{\eta_k}{2}$,
    then the stopping criterion of the inner subproblem in Algorithm \ref{alg:al} is satisfied.
\end{enumerate}
\end{lemma}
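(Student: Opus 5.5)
The proof is a direct adaptation of the standard proximal-gradient argument (cf.\ \cite[Lemma 9.5]{wright2022optimization}). Write $\psi=\psi_{\alpha_k}(x)$, $G=G_{\alpha_k}(x)$, $\nabla=\nabla_x\mathcal{L}_{\beta_k}(x,y_k)$, so that $\psi = x-\alpha_k G$.

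For part (i), I use the optimality condition of the prox subproblem. Since $g$ is convex, $\psi$ minimizes $g(u)+\frac{1}{2\alpha_k}\|u-(x-\alpha_k\nabla)\|^2$ exactly when
\[
0\in\partial g(\psi)+\tfrac{1}{\alpha_k}\bigl(\psi-x+\alpha_k\nabla\bigr)=\partial g(\psi)-G+\nabla .
\]
This gives $G-\nabla\in\partial g(\psi)$ immediately.

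For part (ii), I avoid convexity of $\mathcal{L}_{\beta_k}$ and rely only on the $L_k$-smoothness from Lemma \ref{lem:Lipschitz5.4}. The descent lemma with $\psi-x=-\alpha_kG$ and $\alpha_k\le 1/L_k$ gives
\[
\mathcal{L}_{\beta_k}(\psi,y_k)\le \mathcal{L}_{\beta_k}(x,y_k)-\alpha_k\langle\nabla,G\rangle+\tfrac{\alpha_k}{2}\|G\|^2 .
\]
Next, (i) and the subgradient inequality for $g$ give
\[
g(\psi)\le g(z)-\langle G-\nabla,\,z-\psi\rangle=g(z)+\langle G-\nabla,\,\psi-z\rangle .
\]
I expand $\psi-z=(x-z)-\alpha_kG$. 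This yields $\langle G-\nabla,x-z\rangle-\alpha_k\|G\|^2+\alpha_k\langle\nabla,G\rangle$. Adding the two inequalities, the $\alpha_k\langle\nabla,G\rangle$ terms cancel and $\frac{\alpha_k}{2}\|G\|^2-\alpha_k\|G\|^2=-\frac{\alpha_k}{2}\|G\|^2$, which is exactly the claimed bound. Convexity was never needed for the smooth part, since the linearization is replaced by the term $-\langle\nabla,x-z\rangle$ kept explicitly on the right-hand side.

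For part (iii), I take $\alpha_k=1/L_k$. By (i), $G-\nabla\in\partial g(\psi)$, so
\[
\dist\bigl(-\nabla_x\mathcal{L}_{\beta_k}(\psi,y_k),\partial g(\psi)\bigr)\le\bigl\|-\nabla_x\mathcal{L}_{\beta_k}(\psi,y_k)-(G-\nabla)\bigr\| .
\]
The triangle inequality bounds the right-hand side by $\|G\|+\|\nabla_x\mathcal{L}_{\beta_k}(x,y_k)-\nabla_x\mathcal{L}_{\beta_k}(\psi,y_k)\|$. Lemma \ref{lem:Lipschitz5.4} bounds the second term by $L_k\|x-\psi\|=L_k\alpha_k\|G\|=\|G\|$, so the residual is at most $2\|G\|$. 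The consequence for the stopping test of step 3 of Algorithm \ref{alg:al} then follows trivially.

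The main subtlety is using Lemma \ref{lem:Lipschitz5.4} legitimately. Its constant $L_k$ depends on $c_{\text{big}}$ through a bound on $\|c(\cdot)\|$, and Lemma \ref{lem:grad_upperbound} provides that bound only at the iterates. I would handle this by assuming, as the paper implicitly does, that $L_k$-Lipschitz continuity holds on the region containing $x$ and $\psi$; alternatively one could note that it is enough along the segment $[x,\psi]$. Everything else is bookkeeping.
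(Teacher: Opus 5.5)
Your proposal is correct and follows the paper's proof essentially step for step. For (i) you use the optimality condition of the prox subproblem; for (ii) you combine the descent lemma with the subgradient inequality from (i) and substitute $\psi_{\alpha_k}(x)=x-\alpha_k G_{\alpha_k}(x)$; for (iii) you use $G_{\alpha_k}(x)-\nabla_x\mathcal{L}_{\beta_k}(x,y_k)\in\partial g(\psi_{\alpha_k}(x))$ together with the triangle inequality and $L_k\alpha_k=1$. The caveat you raise is also fair: the proof of Lemma \ref{lem:Lipschitz5.4} bounds $\|c(\cdot)\|$ by $c_{\text{big}}$ at arbitrary points, although Lemma \ref{lem:grad_upperbound} only controls the iterates. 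The paper's own proof of this lemma invokes that Lipschitz bound in exactly the same way, so your argument is no weaker than the original.
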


\begin{proof}
By the first-order optimality condition for the proximal operator, we have 
\[\psi_{\alpha_k}(x) = \operatorname{prox}_{\alpha_k g}\bigl(x-\alpha_k \nabla_x \mathcal{L}_{\beta_k}(x,y_k)\bigr) =  \displaystyle\arg\min_u\left\{g(u)+\frac{1}{2\alpha_k}\|u- (x-\alpha_k \nabla_x \mathcal{L}_{\beta_k}(x,y_k))\|^2\right\}. \]
Thus, we have \[ 0\in \partial g(\psi_{\alpha_k}(x)) + \frac{1}{\alpha_k}\bigl(\psi_{\alpha_k}(x)-(x-\alpha_k \nabla_x \mathcal{L}_{\beta_k}(x,y_k))\bigr). \]
Plugging in  the definition of $G_{\alpha_k}(x)$ yields part $(i)$.

Next, since $\nabla_x \mathcal{L}_{\beta_k}(x,y_k)$ is $L_k$-Lipschitz by Lemma \ref{lem:Lipschitz5.4}, for $ \alpha_k \in (0,\frac{1}{L_k}]$, we have
\begin{align*}
\mathcal{L}_{\beta_k}(\psi_{\alpha_k}(x),y_k) &\le\mathcal{L}_{\beta_k}(x,y_k)+\langle \nabla_x \mathcal{L}_{\beta_k}(x,y_k),\psi_{\alpha_k}(x)-x\rangle+\frac{L_k}{2}\|\psi_{\alpha_k}(x)-x\|^2\\
& \le \mathcal{L}_{\beta_k}(x,y_k)- \alpha_k \langle  \nabla_x \mathcal{L}_{\beta_k}(x,y_k),G_{\alpha_k}(x)\rangle +\frac{\alpha_k}{2}\|G_{\alpha_k}(x)\|^2
\end{align*}

Meanwhile, by convexity of $g$ and result of $(i)$, for any $z\in \dom(g)$, we have 
\[g(\psi_{\alpha_k}(x)) \le g(z)+\langle G_{\alpha_k}(x) -\nabla_x \mathcal{L}_{\beta_k}(x,y_k),\psi_{\alpha_k}(x)-z\rangle.\]

Adding the two inequalities and plugging in $\psi_{\alpha_k}(x) = x - \alpha_k G_{\alpha_k}(x)$ yields part $(ii)$.

Finally, we establish the connection between the stopping criterion in Algorithm \ref{alg:al} and the proximal gradient mapping $G_{\alpha_k}(x)$. By part $(i)$, we can choose
\[
p:= G_{\alpha_k}(x)-\nabla_x\mathcal{L}_{\beta_k}(x,y_k)\in \partial g\bigl(\psi_{\alpha_k}(x)\bigr),
\]
then it follows that
\begin{align*}
\operatorname{dist}\!\left( -\nabla_x\mathcal{L}_{\beta_k}\bigl(\psi_{\alpha_k}(x),y_k\bigr), \partial g\bigl(\psi_{\alpha_k}(x)\bigr) \right)
&= \inf_{p \in \partial g\bigl(\psi_{\alpha_k}(x)\bigr)} \| -\nabla_x \mathcal{L}_{\beta_k}(\psi_{\alpha_k}(x),y_k)-p\| \\
&\le \|\nabla_x \mathcal{L}_{\beta_k}(x,y_k)-\nabla_x \mathcal{L}_{\beta_k}(\psi_{\alpha_k}(x),y_k)-G_{\alpha_k}(x)\| \\
&\le \|\nabla_x \mathcal{L}_{\beta_k}(x,y_k)-\nabla_x \mathcal{L}_{\beta_k}(\psi_{\alpha_k}(x),y_k)\|+\|G_{\alpha_k}(x)\|.
\end{align*}
By Lemma \ref{lem:Lipschitz5.4},
\[\|\nabla_x\mathcal{L}_{\beta_k}(x,y_k) - \nabla_x \mathcal{L}_{\beta_k}(\psi_{\alpha_k}(x),y_k)\| \le L_k\|x-\psi_{\alpha_k}(x)\| =
L_k\alpha_k\|G_{\alpha_k}(x)\|. \]
Choose $\alpha_k= \frac{1}{L_k}$, we obtain
\[ \operatorname{dist}\bigl(-\nabla_x\mathcal{L}_{\beta_k}\bigl(\psi_{\alpha_k}(x),y_k\bigr), \partial g\bigl(\psi_{\alpha_k}(x)\bigr)  \le
(1+L_k\alpha_k)\|G_{\alpha_k}(x)\| = 2\|G_{\alpha_k}(x)\|. \]
Therefore, if \[ \|G_{\alpha_k}(x)\|\le \frac{\eta_k}{2}, \]
then \[ \operatorname{dist}\bigl(-\nabla_x\mathcal{L}_{\beta_k}\bigl(\psi_{\alpha_k}(x),y_k\bigr), \partial g\bigl(\psi_{\alpha_k}(x)\bigr)\le \eta_k. \]
This shows that it suffices to drive the gradient mapping norm below $\frac{\eta_k}{2}$ in order to satisfy the subproblem stopping criterion in Algorithm \ref{alg:al}.

\end{proof}

\mo{Next, we prove a uniform upper bound on $\L_{\beta_k}(x_k, y_k)$, which will appear in our final complexity result.}
\begin{lemma} \label{lem:Lupperbound}
    Let Assumptions \ref{assum:smoothness} and \ref{assum:regularity} hold. In addition, assume that
    \begin{equation*}
        \L_{\beta_k}(x_{k+1}, y_k) + g(x_{k+1}) \leq \L_{\beta_{k}}(x_k, y_k) + g(x_k)
    \end{equation*}
    holds at every iteration $k$. Let
    \begin{equation*}
        \hat{N}(\epsilon) := \ceil[\Bigg]{\max \Biggl\{N(\epsilon), \frac{\log(\frac{2c_\text{big}}{\epsilon})}{|\log(\tau)|} \Biggl\} -1} \quad \text{and} \quad \cL := \frac{(4 \nu \ymax + \gamma (\lambda_f' + \lambda_J' \ymax + \frac{1}{\beta_1})) (\lambda_f' + \lambda_J' \ymax + \frac{1}{\beta_1})}{2\beta_1\nu^2(1-\gamma^{-1})},
    \end{equation*}
    where $\ymax$ is defined in Lemma \ref{lem:grad_upperbound}. Then, for all $k$,
    \begin{equation*}
        \L_{\beta_k}(x_k, y_k) + g(x_k) \leq  \L_{\beta_1}(x_1,y_1) + g(x_1) + \cL \hat{N}(\epsilon).
    \end{equation*}
\end{lemma}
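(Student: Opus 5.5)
We bound the one-step change of the merit value $\Phi_k := \L_{\beta_k}(x_k,y_k)+g(x_k)$ and telescope. Between outer iterations $k$ and $k+1$ three things change: $x_k \to x_{k+1}$, $y_k \to y_{k+1}$, and $\beta_k \to \beta_{k+1}$. The monotonicity hypothesis handles the $x$-step,
\begin{equation*}
\L_{\beta_k}(x_{k+1},y_k)+g(x_{k+1}) \le \Phi_k .
\end{equation*}
It therefore remains to control the increase due to the dual update and the penalty update:
\begin{equation*}
\Phi_{k+1}-\Phi_k \le \bigl(y_{k+1}-y_k\bigr)^T c(x_{k+1}) + \frac{\beta_{k+1}-\beta_k}{2}\|c(x_{k+1})\|^2 .
\end{equation*}

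For the dual term, both $y_k$ and $y_{k+1}$ lie in the ball of radius $\ymax$. Hence $|(y_{k+1}-y_k)^Tc(x_{k+1})| \le 2\ymax\|c(x_{k+1})\|$. The penalty term is nonzero only when $\beta_{k+1}=\gamma\beta_k$, in which case it equals $\frac{(\gamma-1)\beta_k}{2}\|c(x_{k+1})\|^2$. Write $A:=\lambda_f'+\lambda_J'\ymax+\frac{1}{\beta_1}$. Lemma \ref{lem:grad_upperbound}(i) gives $\|c(x_{k+1})\|\le \frac{A}{\nu\beta_k}$, since $\frac{1}{\beta_k}\le \frac{1}{\beta_1}$. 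Then
\begin{equation*}
\Phi_{k+1}-\Phi_k \le \frac{2\ymax A}{\nu\beta_k} + \frac{(\gamma-1)A^2}{2\nu^2\beta_k} = \frac{(4\nu\ymax+(\gamma-1)A)A}{2\nu^2\beta_k},
\end{equation*}
which is at most $\frac{(4\nu\ymax+\gamma A)A}{2\nu^2\beta_k}$.

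The key point, and the main obstacle, is that the increments are not summable in general: $\beta_k$ only grows on a subset of iterations. So the summation has to be organized by the value of $\beta_k$. Following the structure of Theorem \ref{thm:complexity_outer5.3}, $\beta_k=\gamma^{j}\beta_1$, where $j$ counts the iterations on which the constraint-decrease test failed. I would argue that the number of iterations spent at a fixed level $\beta=\gamma^j\beta_1$ is at most $\hat N(\epsilon)$. A run of that many consecutive successful decreases would trigger termination by the argument of Theorem \ref{thm:complexity_outer5.3}(i), since $\hat N(\epsilon)+1$ successes already force $\|c\|\le \epsilon/2$ and $\eta_k\le\epsilon/2$. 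Summing the per-step bound over levels then gives
\begin{equation*}
\Phi_k-\Phi_1 \le \sum_{j\ge 0} \hat N(\epsilon)\,\frac{(4\nu\ymax+\gamma A)A}{2\nu^2\beta_1\gamma^{j}} = \hat N(\epsilon)\,\frac{(4\nu\ymax+\gamma A)A}{2\nu^2\beta_1(1-\gamma^{-1})} = \cL\,\hat N(\epsilon).
\end{equation*}
This is exactly the claimed bound.

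The delicate bookkeeping is the counting of iterations per $\beta$-level. There may be an off-by-one issue: the step that increases $\beta$ is counted at the old level. If a block at one level can have $\hat N(\epsilon)+1$ steps including the failing one, I would absorb this by using the slack between $\gamma-1$ and $\gamma$ in the constant. Specifically, the failing step contributes the penalty term while the successful steps do not, so one extra dual-only step can be charged against that slack.
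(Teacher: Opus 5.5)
Your route is the paper's route: the same one-step identity combined with the monotonicity hypothesis, and the same per-step bound from Lemma~\ref{lem:grad_upperbound}(i) together with $|(y_{k+1}-y_k)^Tc(x_{k+1})|\le 2\ymax\|c(x_{k+1})\|$. Your factor $(\gamma-1)A$ is slightly sharper than the paper's $\gamma A$. You then use the same grouping of iterations by the value of $\beta_k$, followed by a geometric series.

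The step that does not go through is the count of increments per $\beta$-level, and your repair of it is not valid. The fact you cite ($\hat N(\epsilon)+1$ consecutive successes force termination) only bounds the number of \emph{successful} iterations at a level by $\hat N(\epsilon)$. The iteration that fails the test $\|c(x_{k+1})\|\le\tau\|c(x_k)\|$ is still run with the old value $\beta_k=b$, so its increment also carries the factor $1/b$. A level can therefore contribute $\hat N(\epsilon)+1$ increments.

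With $A=\lambda_f'+\lambda_J'\ymax+1/\beta_1$, your budget at level $b$ is $\hat N(\epsilon)(4\nu\ymax+\gamma A)A/(2\nu^2 b)$. The worst case is $\hat N(\epsilon)+1$ dual increments of size $4\nu\ymax A/(2\nu^2 b)$ plus one penalty increment $(\gamma-1)A^2/(2\nu^2 b)$. Budget minus worst case equals $A\bigl[((\hat N(\epsilon)-1)\gamma+1)A-4\nu\ymax\bigr]/(2\nu^2 b)$. Charging the extra dual step to the unused penalty slack therefore requires $4\nu\ymax\le((\hat N(\epsilon)-1)\gamma+1)A$. The slack is in units of $A^2$ and the extra charge in units of $\nu\ymax A$, and nothing in Assumptions~\ref{assum:smoothness}--\ref{assum:regularity} relates them. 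For example, when $\hat N(\epsilon)=1$ you would need $4\nu\ymax\le\lambda_f'+\lambda_J'\ymax+1/\beta_1$.

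The paper's proof does not resolve this either. It asserts, citing Theorem~\ref{thm:complexity_outer5.3}, that at most $\hat N(\epsilon)$ iterations occur before $\beta_k$ increases, and charges $\hat N(\epsilon)$ terms per level, so it has the same off-by-one. In fact the inequality as stated can fail when $\hat N(\epsilon)=0$:
\begin{itemize}
\item Take $n=m=1$, $g\equiv 0$, $c(x)=x$, $f(x)=-2\sin(x-2)$, so that $\lambda_f'=2$ and $\lambda_J'=\nu=1$.
\item Take $x_1=2$, $y_1=0$, $y_{\max}=2$, $\beta_1=1$, $\tau=0.1$, $\epsilon=1$, $\eta_k'=1/(2k)$.
\item Then $\ymax=2$, $c_{\text{big}}=5$, $N(\epsilon)=1$, and hence $\hat N(\epsilon)=0$.
\item Since $\nabla_x\L_{1}(2,0)=0$, the subproblem may return $x_2=x_1$.
\item The method does not stop, because $\|c(x_2)\|=2>\epsilon$, and the decrease test fails.
\item Consequently $\L_{\beta_2}(x_2,y_2)=4+2\gamma>2=\L_{\beta_1}(x_1,y_1)$, contradicting the claimed bound.
\end{itemize}

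What your argument (and the paper's) actually establishes is the bound with $(\hat N(\epsilon)+1)\cL$ in place of $\cL\hat N(\epsilon)$. That is still $\mathcal{O}(\log\epsilon^{-1})$ and suffices for every later use, e.g.\ in Theorem~\ref{complexity_inner_deter}. You should state and prove that version rather than try to absorb the extra step into the constant.
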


\begin{proof}
    By the definition of $\L_{\beta_k}(x_k,y_k)$, the statement of the lemma, and the results of Lemma \ref{lem:grad_upperbound},
    \begin{align*}
        \L_{\beta_k}(x_k,y_k) + g(x_k) &= \L_{\beta_{k-1}}(x_k,y_{k-1}) + g(x_k) + (y_k-y_{k-1})^T c(x_k) + \frac{\beta_k - \beta_{k-1}}{2} \|c(x_k)\|^2 \\
        &\leq \L_{\beta_{k-1}}(x_{k-1},y_{k-1}) + g(x_{k-1}) + \frac{(4 \nu \ymax + \gamma (\lambda_f' + \lambda_J' \ymax + \frac{1}{\beta_1})) (\lambda_f' + \lambda_J' \ymax +  \frac{1}{\beta_1})}{2\nu^2 \beta_{k-1}}.
    \end{align*}
    Applying this bound recursively, we have that
    \begin{equation*}
        \L_{\beta_k}(x_k, y_k) + g(x_k) \leq  \L_{\beta_1}(x_1, y_1) + g(x_1) + \frac{(4 \nu \ymax + \gamma (\lambda_f' + \lambda_J' \ymax + \frac{1}{\beta_1})) (\lambda_f' + \lambda_J' \ymax +  \frac{1}{\beta_1})}{2\nu^2} \sum_{j=1}^{k-1} \frac{1}{\beta_{j-1}}.
    \end{equation*}
    By Theorem \ref{thm:complexity_outer5.3}, we know that the maximum number of consecutive iterations before we increase $\beta_k$ or terminate is bounded by $\hat{N}(\epsilon)$. Thus, for all $j$, we have that $\beta_j \geq \beta_1 \gamma^{\floor{k/\hat{N}(\epsilon)}}$ and thus,
    \begin{align*}
        \L_{\beta_k}(x_k, y_k) + g(x_k) &\leq  \L_{\beta_1}(x_1, y_1) + g(x_1) \\
        &\quad+ \frac{\hat{N}(\epsilon)(4 \nu \ymax + \gamma (\lambda_f' + \lambda_J' \ymax + \frac{1}{\beta_1})) (\lambda_f' + \lambda_J' \ymax +  \frac{1}{\beta_1})}{2 \beta_1\nu^2} \sum_{j=1}^{\floor{(k-1)/\hat{N}(\epsilon)}} \frac{1}{\gamma^j} \\
        &\leq  \L_{\beta_1}(x_1, y_1) + g(x_1) + \frac{\hat{N}(\epsilon)(4 \nu \ymax + \gamma (\lambda_f' + \lambda_J' \ymax + \frac{1}{\beta_1})) (\lambda_f' + \lambda_J' \ymax + \frac{1}{\beta_1})}{2\beta_1\nu^2(1-\gamma^{-1})},
    \end{align*}
    where the final inequality follows by recognizing the summation as a geometric series with parameter $\gamma^{-1}$.
\end{proof}

The mild additional assumption that, the inner subproblem solution is at least as good in function value as the previous iterate can be easily enforced by any monotonic inner solver started at $x_k$. It should be noted that if the algorithm did not adaptively select $\beta_k$ and instead updated it at each iteration, then $\hat{N}(\epsilon) = 1$ and the previous result would be independent of $\epsilon$. Thus, the fact that $\L_{\beta_k}(x_k, y_k) = \mathcal{O}(\log(\epsilon^{-1}))$ is a direct consequence of the adaptivity of the algorithm, which we regard as a minor price to pay for the additional benefits of an adaptive method.

Next, we prove that the subproblem is uniformly lower bounded at each $k$, independently of $\beta_k$.

\begin{lemma} \label{lem:Llowerbound}
    Let Assumption \ref{assum:smoothness} hold. 
    Then,
    \begin{equation*}
        \underset{x \in \dom(g)}{\inf} \ \L_{\beta_k}(x, y_k) + g(x) \geq f_{\min} + g_{\min} - \frac{1}{2\beta_1} \ymax^2.
    \end{equation*}
\end{lemma}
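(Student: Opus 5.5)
We prove the lower bound pointwise and then take the infimum. Fix $k$ and any $x \in \dom(g)$. By definition,
\begin{equation*}
\L_{\beta_k}(x,y_k) + g(x) = f(x) + g(x) + c(x)^T y_k + \frac{\beta_k}{2}\|c(x)\|^2 .
\end{equation*}
The only term that can be negative without control is the linear multiplier term $c(x)^T y_k$. We absorb it into the quadratic penalty by completing the square in $c(x)$:
\begin{equation*}
c(x)^T y_k + \frac{\beta_k}{2}\|c(x)\|^2 = \frac{\beta_k}{2}\Bigl\|c(x) + \frac{y_k}{\beta_k}\Bigr\|^2 - \frac{1}{2\beta_k}\|y_k\|^2 \geq -\frac{1}{2\beta_k}\|y_k\|^2 .
\end{equation*}
Equivalently, one can use Young's inequality $c(x)^T y_k \geq -\frac{\beta_k}{2}\|c(x)\|^2 - \frac{1}{2\beta_k}\|y_k\|^2$.

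Next we make the bound uniform in $k$. By the definition of $\ymax$ in Lemma \ref{lem:grad_upperbound}, we have $\|y_k\|_2 \leq \ymax$. Step 5 of Algorithm \ref{alg:al} only multiplies $\beta_k$ by $\gamma > 1$ or leaves it unchanged, so $\beta_k \geq \beta_1$ for all $k$. Together these give $-\frac{1}{2\beta_k}\|y_k\|^2 \geq -\frac{1}{2\beta_1}\ymax^2$.

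Finally, Assumption \ref{assum:smoothness} gives $f(x) \geq f_{\min}$ and $g(x) \geq g_{\min}$. Combining the three bounds,
\begin{equation*}
\L_{\beta_k}(x,y_k) + g(x) \geq f_{\min} + g_{\min} - \frac{1}{2\beta_1}\ymax^2
\end{equation*}
for every $x \in \dom(g)$. Taking the infimum over $x$ yields the claim.

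There is no real obstacle here. The one point to handle correctly is the direction of monotonicity of $\beta_k$: the bound $1/\beta_k \leq 1/\beta_1$ is what makes the estimate independent of $k$. It is also worth noting that $\ymax$ is finite because of the $\ell_\infty$ projection in Algorithm \ref{alg:al}, which gives $\ymax \leq \max\{\|y_1\|_2, \sqrt{m}\,y_{\max}\}$.
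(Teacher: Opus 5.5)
Your proof is correct and follows essentially the same approach as the paper: bound the multiplier term via Young's inequality (equivalently, completing the square), then use $\|y_k\| \leq \ymax$, $\beta_k \geq \beta_1$, and the lower bounds $f_{\min}$, $g_{\min}$ from Assumption \ref{assum:smoothness}. Your added justification that $\ymax \leq \max\{\|y_1\|_2, \sqrt{m}\,y_{\max}\}$ is accurate and fills in a detail the paper leaves implicit.
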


\begin{proof}
    By the definition of the Augmented Lagrangian,
    \begin{equation*}
        \underset{x \in \dom(g)}{\inf} \mathcal{L}_{\beta_k}(x, y_k) + g(x) = \underset{x \in \dom(g)}{\inf} f(x) + g(x) + y_k^T c(x) + \frac{\beta_k}{2} \|c(x)\|^2 \geq f_{\min} + g_{\min} - \frac{1}{2 \beta_1} \ymax^2,
    \end{equation*}
    where we used Young's inequality, the definition of $\ymax$, and $\beta_k \geq \beta_1$ in the final inequality.
\end{proof}

Given these results, we are prepared to obtain an overall complexity guarantee for the deterministic setting.

\begin{theorem}\label{complexity_inner_deter} (Overall Complexity for Deterministic Setting)

Suppose that the conditions of Theorem \ref{thm:complexity_outer5.3} hold.  \mo{If, at every outer iteration $k$, the subproblem in Step 3 of Algorithm \ref{alg:al} is solved via the proximal gradient method with $\alpha_k = 1/L_k$, then,} the total number of proximal gradient iterations required by Algorithm \ref{alg:al} to return an $\epsilon$-approximate feasible solution is
\[ \widetilde{\mathcal O}(\epsilon^{-3}). \]
\end{theorem}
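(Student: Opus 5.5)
The plan is to bound the cost of each outer iteration by the standard nonconvex proximal-gradient rate, using Lemmas \ref{lem:Lupperbound} and \ref{lem:Llowerbound} for the function-value gap. The total cost is then the per-iteration cost times the outer-iteration count from Theorem \ref{thm:complexity_outer5.3}.

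\textbf{Per-iteration cost.} Fix an outer iteration $k$ and run proximal gradient on $\phi_k$ from $u_0 = x_k$ with $\alpha_k = 1/L_k$, where $L_k$ is given by Lemma \ref{lem:Lipschitz5.4}. Taking $z = x$ in Lemma \ref{lem:proximal_grad5.5}(ii) gives sufficient decrease:
\[
\phi_k(u_{t+1}) \le \phi_k(u_t) - \frac{1}{2L_k}\|G_{\alpha_k}(u_t)\|^2 .
\]
Three consequences follow.
\begin{itemize}
\item The method is monotone, so the hypothesis of Lemma \ref{lem:Lupperbound} holds automatically.
\item Summing over $T$ steps yields $\min_{t<T}\|G_{\alpha_k}(u_t)\|^2 \le 2L_k(\phi_k(x_k)-\inf\phi_k)/T$.
\item By Lemma \ref{lem:proximal_grad5.5}(iii), it suffices that $\|G_{\alpha_k}\| \le \eta_k/2$. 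The inner loop therefore stops after at most
\[
T_k \le \left\lceil \frac{8L_k\bigl(\phi_k(x_k)-\inf\phi_k\bigr)}{\eta_k^2}\right\rceil
\]
iterations.
\end{itemize}

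\textbf{Bounding the gap.} Lemma \ref{lem:Lupperbound} gives $\phi_k(x_k) \le \L_{\beta_1}(x_1,y_1)+g(x_1)+\cL\hat N(\epsilon)$. Lemma \ref{lem:Llowerbound} gives $\inf\phi_k \ge f_{\min}+g_{\min}-\ymax^2/(2\beta_1)$. Together they bound the gap by $\Delta := \mathcal O(\hat N(\epsilon)) = \mathcal O(\log\epsilon^{-1})$.

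\textbf{Bounding $L_k$ and $\eta_k$.} This is the step where the $\epsilon$-dependence is set. Lemma \ref{lem:Lipschitz5.4} gives $L_k = \mathcal O(\beta_k)$. Lemma \ref{lem:beta_upperbound5.2} says the algorithm stops before $\beta_k$ exceeds $\gamma\beta_{\rm upper}$, and $\beta_{\rm upper} = \mathcal O(\epsilon^{-1})$, so $L_k = \mathcal O(\epsilon^{-1})$ throughout. For the tolerance, choose the forcing sequence so that $\eta_k' \ge c\,\epsilon$ while $\eta_k \le \epsilon/2$ after $N(\epsilon)$. For example, take $\eta_k' = \max\{\epsilon/2,\ \eta_1' \theta^{k}\}$, which gives $N(\epsilon) = \mathcal O(\log\epsilon^{-1})$. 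Then $\eta_k = \min\{1/\beta_k,\eta_k'\} \gtrsim \min\{\epsilon/\gamma, \epsilon/2\}$, using $\beta_k \le \gamma\beta_{\rm upper} = \mathcal O(\epsilon^{-1})$. Hence $\eta_k^{-2} = \mathcal O(\epsilon^{-2})$.

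Combining these, each outer iteration costs $T_k = \mathcal O(\epsilon^{-1}\cdot\epsilon^{-2}\cdot\log\epsilon^{-1}) = \widetilde{\mathcal O}(\epsilon^{-3})$.

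\textbf{Summing.} Theorem \ref{thm:complexity_outer5.3} bounds the number of outer iterations by $\mathcal O(\log^2\epsilon^{-1})$, since $\hat N(\epsilon) = \mathcal O(\log\epsilon^{-1})$ and $\log(\beta_{\rm upper}/\beta_1)/\log\gamma = \mathcal O(\log\epsilon^{-1})$. Multiplying the two bounds gives a total of $\widetilde{\mathcal O}(\epsilon^{-3})$ proximal-gradient iterations.

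\textbf{Main obstacle.} The delicate point is making sure $L_k$ and $\eta_k$ really are controlled uniformly over all outer iterations. In particular, the $1/\beta_k$ cap in $\eta_k$ must not push $\eta_k$ below order $\epsilon$. This relies on $\beta_k$ never exceeding $\gamma\beta_{\rm upper}$, which in turn relies on the termination guarantee of Lemma \ref{lem:beta_upperbound5.2}. A tighter bookkeeping, summing $\beta_k$ over a geometric sequence, would improve only logarithmic factors, so I would not pursue it.
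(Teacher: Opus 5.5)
Your proposal is correct and follows essentially the same route as the paper: sufficient decrease from Lemma~\ref{lem:proximal_grad5.5}(ii) with $z=x$, which also supplies the monotonicity needed for Lemma~\ref{lem:Lupperbound}; telescoping with the gap bounded by Lemmas~\ref{lem:Lupperbound} and~\ref{lem:Llowerbound}; the stopping test via $\|G_{\alpha_k}\|\le\eta_k/2$ from Lemma~\ref{lem:proximal_grad5.5}(iii); and finally $L_k=\mathcal O(\beta_k)=\mathcal O(\epsilon^{-1})$, $\eta_k=\Theta(\epsilon)$, $\hat N(\epsilon)=\mathcal O(\log\epsilon^{-1})$, multiplied by the polylogarithmic outer-iteration count. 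Your explicit $\epsilon$-dependent forcing sequence and the observation that $\beta_k\le\max\{\beta_1,\gamma\beta_{\textrm{upper}}\}$ make precise what the paper only asserts, namely $\eta_k=\Theta(\epsilon)$ and $N(\epsilon)=\mathcal O(\log\epsilon^{-1})$; the only nit is that your $\eta_k'$ tends to $\epsilon/2$ rather than to $0$ as Algorithm~\ref{alg:al} nominally requires, which is harmless because only the first $\mathcal O(\log^2\epsilon^{-1})$ terms are ever used.
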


\begin{proof}

Consider the behavior of the inner subproblem solver for a fixed inner iteration $k$. By Lemma \ref{lem:proximal_grad5.5}$(ii)$ applied to the inner subproblem $\phi_k$ with $z=x$, for every step of proximal gradient, we have
\begin{equation*}
\phi_k(\psi_{\alpha_k}(x))\le \phi_k(x)-\frac{\alpha_k}{2}\|G_{\alpha_k}(x)\|^2.
\end{equation*}
We note that this inequality clearly proves monotonicity of the iterates in the inner solver, and thus the results of Lemma \ref{lem:Lupperbound} hold. Summing the above inequality for subproblem iterations $t=1,\dots,T$, we have
\begin{equation*}
    \phi_k(x_k^{T+1}) \le \phi_k(x_k^1)-\frac{\alpha_k}{2}\sum_{t=1}^{T}\|G_t\|^2,
\end{equation*}
where we use $G_t$ as shorthand to denote $G_{\alpha_k}(x^t_k)$. Rearranging this inequality and 
using the results of Lemmas \ref{lem:Lupperbound} and \ref{lem:Llowerbound},
\begin{equation*}
    \frac{\alpha_k}{2}\sum_{t=1}^{T}\|G_t\|^2 \le \phi_k(x_k^1)-\phi_k(x_k^{T+1}) \le \L_{\beta_1}(x_1, y_1) + g(x_1) - f_{\min} - g_{\min} + \cL \hat{N}(\epsilon) + \frac{1}{2 \beta_1} \ymax^2,
\end{equation*}

Using $\alpha_k=\frac{1}{L_k}$, we have that
\begin{equation*}
    \min_{1\le t\le T}\|G_t\|^2 \le \frac{2L_k(\L_{\beta_1}(x_1, y_1) + g(x_1) - f_{\min} - g_{\min} + \cL \hat{N}(\epsilon) + \frac{1}{2 \beta_1} \ymax^2)}{T}.
\end{equation*}
Hence, if
\begin{equation*}
    T_k \ge \frac{8L_k(\L_{\beta_1}(x_1, y_1) + g(x_1) - f_{\min} - g_{\min} + \cL \hat{N}(\epsilon) + \frac{1}{2 \beta_1} \ymax^2)}{\eta_k^2},
\end{equation*}
then there exists some $t\in\{1,\dots,T_k\}$ such that \[ \|G_t\|\le \frac{\eta_k}{2}. \]
By part $(iii)$ of Lemma \ref{lem:proximal_grad5.5}, the corresponding iterate $x_k^{t+1}$ satisfies
\[
\operatorname{dist}\bigl(-\nabla_x L_{\beta_k}(x_k^{t+1},y_k),\partial g(x_k^{t+1})\bigr)\le \eta_k.
\]

Therefore, the number of proximal gradient iterations required by the $k$-th inner solve is
\[
T_k=O\!\left(\frac{L_k \hat{N}(\epsilon)}{\eta_k^2}\right).
\]
By Lemma \ref{lem:Lipschitz5.4}, we have that $L_k=O(\beta_k)$, while Lemma \ref{lem:beta_upperbound5.2} proves that $\beta_k=O(\epsilon^{-1})$. Additionally, by the definition of $\hat{N}(\epsilon)$, $\hat{N}(\epsilon) = \mathcal{O}(\log(\epsilon^{-1}))$ while Algorithm \ref{alg:al} sets $\eta_k=\Theta(\epsilon)$. Therefore, each subproblem solve requires at most
\[
O\!\left(\frac{\log(\epsilon^{-1})/\epsilon}{\epsilon^2}\right)
=
O(\log(\epsilon^{-1})\epsilon^{-3})
\]
proximal gradient iterations. Combining this with the logarithmic outer-loop bound from Theorem \ref{thm:complexity_outer5.3} yields the overall complexity up to logarithmic factors.

\end{proof}

\subsection{Stochastic Complexity Results}
\label{sec:stochasticanalysis}

We next turn to the stochastic setting. The first step is to define a stochastic gradient mapping and establish its unbiasedness, smoothness, and variance properties. Meanwhile, we showed the Lipschitz property still holds for the Augmented Lagrangian function under the stochastic setting, \mo{provided Assumption \ref{assum:Mean-squared_Smoothness} is satisfied}. These are formalized in the following lemma, which we adopted from \cite{Li2023}. We include the proof for completeness. 

\begin{lemma}\label{lem:Lipschitz_stochastic5.7}
Let Assumptions \ref{assum:smoothness}, \ref{assum:stochastic_first_oracle} and \ref{assum:Unbiasedness_and_Bounded_Variance} hold and fix $y\in \mathbb{R}^m$ \mo{ such that $\|y\| \leq \ymax$.} Let \[\Delta_\beta(x, y, \zeta) = \nabla f(x, \xi_1)+ J(x, \xi_1)^T y + \beta J(x, \xi_1)^T c(x, \xi_2),\] where $\zeta = (\xi_1, \xi_2)$ and $\xi_1, \xi_2$ are independent and follow the same distribution as $\xi$. Then for any $x,v \in \dom(g)$, it holds:
\begin{equation*}
    \mathbb{E}_\zeta[\Delta_\beta(x, y, \zeta)] = \nabla \mathcal{L}_{\beta}(x,y) \quad \text{and} \quad \mathbb{E}_\zeta[\|\Delta_\beta(x, y, \zeta) - \nabla \mathcal{L}_{\beta}(x,y)\|^2] \leq \sigma_{\mathcal{L}_{\beta}}^2,
\end{equation*}
where $\sigma_{\L_{\beta}} = \sqrt{3\sigma_f^2 + 3\sigma_c^2\mo{\ymax^2} + 3\beta^2 \sigma_c^2}.$ If, in addition, Assumption \ref{assum:Mean-squared_Smoothness} holds, then,
\begin{equation*}
    \mathbb{E}_\zeta[\|\Delta_\beta(x, y, \zeta) - \Delta_\beta(v, y, \zeta)\|^2] \leq L_{\L_{\beta}}^2\|x-v\|^2,
\end{equation*}
holds for any $x,v \in \dom(g)$, where $L_{\L_{\beta}} = \sqrt{3L_f^2 + 3L_J^2\mo{\ymax^2} + 3\beta^2 L_J^2}.$
\end{lemma}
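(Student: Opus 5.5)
The plan is to prove the three claims in turn: unbiasedness, the variance bound, and the mean-squared Lipschitz bound. Each follows from Assumptions \ref{assum:Unbiasedness_and_Bounded_Variance} and \ref{assum:Mean-squared_Smoothness}, the independence of $\xi_1$ and $\xi_2$, and the elementary inequality $\|a+b+c\|^2\le 3(\|a\|^2+\|b\|^2+\|c\|^2)$.

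\emph{Unbiasedness.} Expand $\Delta_\beta(x,y,\zeta)$ term by term.
\begin{itemize}
\item $\mathbb{E}_{\xi_1}[\nabla f(x,\xi_1)]=\nabla f(x)$ by Assumption \ref{assum:Unbiasedness_and_Bounded_Variance}.
\item $\mathbb{E}_{\xi_1}[J(x,\xi_1)^T y]=J(x)^T y$, since $y$ is fixed and $J$ is unbiased.
\item For the penalty term, independence gives $\mathbb{E}_{\zeta}[J(x,\xi_1)^T c(x,\xi_2)]=\mathbb{E}_{\xi_1}[J(x,\xi_1)]^T\,\mathbb{E}_{\xi_2}[c(x,\xi_2)]=J(x)^T c(x)$.
\end{itemize}
Summing the three terms gives $\nabla f(x)+J(x)^T y+\beta J(x)^T c(x)=\nabla_x\mathcal{L}_\beta(x,y)$.

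\emph{Variance bound.} Write the deviation as
\[
\Delta_\beta(x,y,\zeta)-\nabla\mathcal{L}_\beta(x,y)=A+B+C,
\]
where $A=\nabla f(x,\xi_1)-\nabla f(x)$, $B=(J(x,\xi_1)-J(x))^T y$, and $C=\beta\bigl(J(x,\xi_1)^T c(x,\xi_2)-J(x)^T c(x)\bigr)$. Apply the three-term inequality and take expectations term by term.
\begin{itemize}
\item $\mathbb{E}\|A\|^2\le\sigma_f^2$ (the assumption writes this constant as $\sigma_g^2$, but it is clearly meant to be $\sigma_f^2$).
\item $\mathbb{E}\|B\|^2\le \mathbb{E}\|J(x,\xi_1)-J(x)\|^2\|y\|^2\le \sigma_c^2\ymax^2$, using the operator-norm bound and $\|y\|\le\ymax$.
\item $\mathbb{E}\|C\|^2\le\beta^2\sigma_c^2$, by the last line of Assumption \ref{assum:Unbiasedness_and_Bounded_Variance}.
\end{itemize}
This gives $\sigma_{\mathcal{L}_\beta}^2=3\sigma_f^2+3\sigma_c^2\ymax^2+3\beta^2\sigma_c^2$. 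No cross terms need to be handled, because the factor-$3$ inequality absorbs them.

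\emph{Mean-squared Lipschitz bound.} Take the difference $\Delta_\beta(x,y,\zeta)-\Delta_\beta(v,y,\zeta)$ and split it the same way:
\[
\bigl(\nabla f(x,\xi_1)-\nabla f(v,\xi_1)\bigr)+\bigl(J(x,\xi_1)-J(v,\xi_1)\bigr)^T y+\beta\bigl(J(x,\xi_1)^T c(x,\xi_2)-J(v,\xi_1)^T c(v,\xi_2)\bigr).
\]
Apply the three-term inequality again and bound each piece with the corresponding line of Assumption \ref{assum:Mean-squared_Smoothness}; the middle piece also uses $\|y\|\le\ymax$. This yields $L_{\mathcal{L}_\beta}^2=3L_f^2+3L_J^2\ymax^2+3\beta^2L_J^2$.

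There is no real obstacle here. The only points needing care are: the cross term in the unbiasedness step requires independent samples $\xi_1,\xi_2$, since otherwise $\mathbb{E}[J^Tc]\neq J^Tc$; the matrix--vector terms must be bounded with operator norms dominated by the Frobenius or Euclidean quantities in the assumptions; and the fixed multiplier $y$ must be bounded by $\ymax$.
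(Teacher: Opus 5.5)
Your proposal is correct and follows essentially the same route as the paper: independence of $\xi_1,\xi_2$ for unbiasedness, then the split into three pieces, the bound $\|a+b+c\|^2\le 3(\|a\|^2+\|b\|^2+\|c\|^2)$, and term-by-term use of the assumptions (with $\|y\|\le \bar{y}_{\max}$ for the middle term) for both the variance and mean-squared smoothness bounds. Your write-up is slightly cleaner than the paper's in two places: you take expectations before applying the variance constants, whereas the paper writes that step as a pointwise inequality, and you correctly read $\sigma_g$ as $\sigma_f$.
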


\begin{proof}
Since $\xi_1$ and $\xi_2$ are independent, it holds
\[ \mathbb{E}_{\zeta}[\Delta_\beta(x, y, \zeta)] = \mathbb{E}_{\xi_1}[\nabla f(x, \xi_1)] + \mathbb{E}_{\xi_1}[J(x, \xi_1)^T y] + \beta \mathbb{E}_{\xi_1}[ J(x, \xi_1)]^T \mathbb{E}_{\xi_2}[c(x, \xi_2)].\] 

Since $\xi_1$ and $\xi_2$ follow the same distribution of $\xi$. We have that \[ \mathbb{E}_{\zeta}[\Delta_\beta(x, y, \zeta)] = \nabla f(x) + J(x)^{T}y + \beta J(x)^{T} c(x) = \nabla \mathcal{L}_{\beta}(x,y). \]
In addition, by the triangle inequality,
\begin{align*}
 \|\Delta_\beta(x, y, \zeta) - \nabla \L_\beta(x,y)\|^2 
 & \leq  3\|\nabla f(x, \xi_1) - \nabla f(x)\|^2\\
 &  + 3\|J(x, \xi_1) - J(x)\|^2 \mo{\|y\|^2} \\
 &  + 3\beta^2\|J(x, \xi_1)^T c(x, \xi_2) - J(x)^T c(x)\|^2\\
 & \leq 3(\sigma_f^2 + \sigma_c^2\mo{\ymax^2} + \beta^2\sigma_c^2). 
\end{align*}

Similarly, we can repeat the above proof when Assumption \ref{assum:Mean-squared_Smoothness} holds to derive the bound for $L_{\L_{\beta}}$.

\end{proof}

\mo{Given the previous lemma, we now provide the following result about the stochastic proximal gradient method. Due to the nature of our inner loop solver, Algorithm \ref{alg:innersolve}, we adapt standard results to prove a high probability convergence result, which will be sufficient to show that Algorithm \ref{alg:innersolve} terminates in a reasonable number of iterations with high-probability.}

\begin{lemma}
\label{lem:aug_lag_grad_upperbound_stochastic5.8}

Let Assumptions \ref{assum:smoothness}, \ref{assum:stochastic_first_oracle} and \ref{assum:Unbiasedness_and_Bounded_Variance} hold and fix outer iteration $k$ for Algorithm \ref{alg:al} and inner iteration $p$ for Algorithm \ref{alg:innersolve}. Meanwhile, in Algorithm \ref{alg:innersolve}, let the stochastic method $\mathcal{M}$ be the stochastic proximal gradient method applied to $ \phi_k(x):=L_{\beta_k}(x,y_k)+g(x)$ for $T_p$ iterations, with stepsize $\alpha_{k,p} > 0$, i.e. at stochastic proximal gradient iteration $t \in \{1, \dots, T_p\}$,
\begin{equation*}
    x_{t+1, k, p} = \operatorname{prox}_{\alpha_{k,p} g}\!\bigl(x_{t, k, p}-\alpha_{k,p} \Delta_{\beta_k}(x_{t, k, p}, y_k, \zeta_t)\bigr),
\end{equation*}
where $\Delta_\beta(x,y,\zeta)$ is defined in Lemma \ref{lem:Lipschitz_stochastic5.7}.
Sample $t^\star$ uniformly from $\{1, \dots, T_p\}$ and assume that $\L_{\beta_k}(x, y_k) + g(x)$ is lower bounded for all $x \in \dom(g)$. Let $L_k$ be defined in Lemma \ref{lem:Lipschitz5.4}, $\sigma_{\L_{\beta_k}}$ be defined in Lemma \ref{lem:Lipschitz_stochastic5.7}, and let $\Omega_{k} := \L_{\beta_k}(x_{k}, y_k) + g(x_{k}) - \underset{x \in \dom(g)}{\min}( \L_{\beta_k}(x, y_k) + g(x))$ denote the function value gap.
Define the failure event as
\begin{equation*}
    F_{k,p} := \left\{\dist\left(-\nabla_x \mathcal{L}_{\beta_k}(x_{k,p}, y_k), \partial g(x_{k,p})\right) ^2 > \eta_k^{2}\right\} \cup \left\{ \mathcal{L}_{\beta_k}(x_{k,p}, y_k) + g(x_{k,p}) > \mathcal{L}_{\beta_k}(x_{k}, y_k) + g(x_{k})\right\}.
\end{equation*}
Then, when $\alpha_{k,p} = \min\left\{\frac{1}{2 L_k}, \sqrt{\frac{\Omega_{k}}{L_k \sigma_{\L_{\beta_k}}^2 T_p}}\right\}$ and $x_{k,p} = x_{t^*,k,p}$,
\begin{equation*}
    \mathbb P\!\left(F_{k,p}\right)\le\frac{72(1+\frac{1}{\sqrt{2}})^2}{\eta_k^2}\max\left\{\frac{2 L_k \Omega_{k}}{T_{k,p}}, \sigma_{\L_{\beta_k}} \sqrt{\frac{2 L_k \Omega_{k}}{T_{k,p}}} \right\}.
\end{equation*}

\end{lemma}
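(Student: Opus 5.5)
Our plan is to first bound the expected squared stationarity residual at the randomly selected iterate $x_{t^\star,k,p}$ via the standard nonconvex stochastic proximal gradient analysis, and then convert it into a probability bound with Markov's inequality. For the descent step we will use the $L_k$-smoothness of $\nabla_x\L_{\beta_k}(\cdot,y_k)$ from Lemma \ref{lem:Lipschitz5.4} together with the prox optimality condition, exactly as in Lemma \ref{lem:proximal_grad5.5}$(i)$--$(ii)$. The difference is that $\nabla_x\L_{\beta_k}$ is replaced by $\Delta_{\beta_k}(x_t,y_k,\zeta_t)$, with error $e_t:=\Delta_{\beta_k}-\nabla_x\L_{\beta_k}$. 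Let $\tilde G_t$ denote the stochastic gradient mapping and $G_t$ the deterministic one. Using nonexpansiveness of the prox, $\|\tilde G_t - G_t\|\le\|e_t\|$. With $\alpha_{k,p}\le \frac{1}{2L_k}$, Young's inequality should give a one-step bound of the form
\begin{equation*}
\phi_k(x_{t+1})\le \phi_k(x_t)-\tfrac{\alpha_{k,p}}{4}\|G_t\|^2+c\,\alpha_{k,p}\|e_t\|^2+\alpha_{k,p}\langle e_t,\cdot\rangle .
\end{equation*}
Here the inner-product term is the cross term $-\alpha_{k,p}\langle e_t,G_t\rangle$ (or an equivalent quantity depending only on $x_t$). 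It has zero conditional mean by Lemma \ref{lem:Lipschitz_stochastic5.7}.

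Next we take total expectations and telescope over $t=1,\dots,T_p$, starting from $x_{1,k,p}=x_k$. The lower bound on $\phi_k$ bounds the total decrease by $\Omega_k$, and the variance bound $\sigma_{\L_{\beta_k}}^2$ from Lemma \ref{lem:Lipschitz_stochastic5.7} controls the noise. This yields
\begin{equation*}
\mathbb E\|G_{t^\star}\|^2\lesssim \frac{\Omega_k}{\alpha_{k,p}T_p}+L_k\alpha_{k,p}\sigma_{\L_{\beta_k}}^2 .
\end{equation*}
We then plug in $\alpha_{k,p}=\min\{\frac{1}{2L_k},\sqrt{\Omega_k/(L_k\sigma^2 T_p)}\}$. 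In the first case the bound is $O(L_k\Omega_k/T_p)$. In the second case it is $O(\sigma\sqrt{L_k\Omega_k/T_p})$. Together these give the $\max\{\cdot,\cdot\}$ form, which is where the factor $2L_k\Omega_k/T_{k,p}$ comes from.

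To pass from the gradient mapping to the stopping residual, we mimic Lemma \ref{lem:proximal_grad5.5}$(iii)$. Since $\alpha_{k,p}\le 1/L_k$, the residual at the point $\psi_{\alpha}(x)$ is at most $(1+L_k\alpha)\|G\|\le 2\|G\|$. We will also need to relate the quantities at $x_{t^\star}$ and at the returned point. We expect the constant $(1+\frac{1}{\sqrt2})^2$ to emerge from bounding $\mathbb E[(\|G\|+\|e\|/\sqrt2)^2]$-type terms, and the factor $72$ from the various Young constants. We would not fight for the exact constants and would instead track them loosely. We then apply Markov: $\mathbb P(\dist^2>\eta_k^2)\le \mathbb E[\dist^2]/\eta_k^2$.

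The main obstacle is the second event in $F_{k,p}$, namely monotonicity $\phi_k(x_{k,p})\le\phi_k(x_k)$, which is not naturally controlled by a gradient-mapping bound. We would first try to handle it as follows. Because the iterates start at $x_k$ and the expected decrease inequality shows $\mathbb E[\phi_k(x_{t^\star})-\phi_k(x_k)]\le \text{(noise term)}$, a Markov-type bound applied to the positive part should be dominated by the same quantity. Failing that, we would bound the positive part of $\phi_k(x_{t^\star})-\phi_k(x_k)$ by a multiple of $\alpha\sum_t\|e_t\|^2$ and absorb it into the same max expression via a union bound. If this becomes too delicate, a fallback is to note that the constant $72(1+1/\sqrt2)^2$ leaves room to split the probability budget between the two events.
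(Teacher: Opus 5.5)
Your target inequality $\mathbb E\|G_{t^\star}\|^2\lesssim \Omega_k/(\alpha_{k,p}T_p)+L_k\alpha_{k,p}\sigma_{\L_{\beta_k}}^2$ is the right one, but a descent argument on $\phi_k$ itself cannot produce it. This is a genuine gap. In your one-step bound the noise enters as $c\,\alpha_{k,p}\|e_t\|^2$. The reason is that the cross term $\langle e_t,x_{t+1}-\psi_{\alpha_{k,p}}(x_t)\rangle$ depends nonlinearly on $e_t$ through the prox and is only $O(\alpha_{k,p}\|e_t\|^2)$. Telescoping therefore gives $\Omega_k/(\alpha_{k,p}T_p)+O(\sigma_{\L_{\beta_k}}^2)$, a floor that does not vanish as $T_{k,p}\to\infty$, so the $\max\{\cdot,\cdot\}$ rate does not follow.

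No sharper use of Young's inequality repairs this. Take $f\equiv 0$ (any $L>0$ is a valid smoothness constant), $g=|\cdot|$, $x=0$ and $e=\pm\sigma$ with $\sigma>1$. Then $G_\alpha(0)=0$, but $\mathbb E\,\phi(x^+)-\phi(0)=\alpha(\sigma-1)$, which dominates $L\alpha^2\sigma^2$ for small $\alpha$. This is the familiar reason direct prox-SGD analyses need growing minibatches.

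The missing idea, which the paper uses, is a different Lyapunov function. Since $\phi_k$ is $L_k$-weakly convex, Davis--Drusvyatskiy (Corollary 3.6) track the Moreau envelope $\varphi_{1/(2L_k)}(x)=\min_u\{\phi_k(u)+L_k\|u-x\|^2\}$. Via nonexpansiveness of $\operatorname{prox}_{\alpha_{k,p}g}$, they compare $x_{t+1}$ with the proximal point $\operatorname{prox}_{\phi_k/(2L_k)}(x_t)$, which does not depend on $\zeta_t$. Inside a squared norm the noise cross term then has zero conditional mean, so $\sigma^2$ enters only at order $L_k\alpha_{k,p}^2$. Drusvyatskiy--Lewis (Theorem 4.5) converts the resulting envelope bound via $\|G_{1/L_k}(x)\|\le\tfrac32(1+\tfrac{1}{\sqrt2})\|\nabla\varphi_{1/(2L_k)}(x)\|$. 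This is where $18(1+\tfrac1{\sqrt2})^2=8\cdot\tfrac94(1+\tfrac1{\sqrt2})^2$ comes from, and squaring the factor $2$ of Lemma~\ref{lem:proximal_grad5.5}$(iii)$ gives the $72$. The constants are not Young's-inequality artifacts.

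None of your fallbacks for the monotonicity event can work. Markov's inequality says nothing about $\mathbb P(X>0)$, since a tiny $\mathbb E X_+$ is compatible with $X>0$ almost surely. Bounding the size of the positive part likewise does not bound the probability that it is positive. The paper does not supply this argument either; it only asserts that both events in $F_{k,p}^c$ follow from one event ``via a mild modification'' of Drusvyatskiy--Lewis.

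In fact no bound that vanishes in $T_{k,p}$ can hold for this event in general. Take $g\equiv 0$, $c\equiv 0$, Gaussian gradient noise, and $x_k$ a strict but non-global local minimizer of $f$, so $\Omega_k>0$. Exit times from the basin grow exponentially in $1/\alpha_{k,p}$, while $T_{k,p}$ grows only like $\alpha_{k,p}^{-2}$. So the iterates stay near $x_k$ with probability tending to one, hence $\phi_k(x_{t^\star})>\phi_k(x_k)$ with probability tending to one, while the right-hand side tends to zero.

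The issue you flagged about relating $x_{t^\star}$ to the returned point is also real. Lemma~\ref{lem:proximal_grad5.5}$(iii)$ bounds the residual at $\psi_{1/L_k}(x_{t^\star})$, not at $x_{t^\star}$, and for nonsmooth $g$ these can differ badly. For example, if $\phi_k(x)=|x|$ and $0<\delta<1/L_k$, the residual at $\delta$ is $1$ while $\|G_{1/L_k}(\delta)\|=L_k\delta$. The paper glosses over this as well. The clean repair for the first event is to return $\psi_{1/L_k}(x_{t^\star})$, which the full-batch oracle assumed in Algorithm~\ref{alg:innersolve} can compute.
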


\begin{proof}

By Lemma \ref{lem:Lipschitz_stochastic5.7}, the estimator $\Delta_{\beta_k}(x,y_k,\zeta)$ is unbiased for $\nabla_x \mathcal{L}_{\beta_k}(x,y_k)$ and has variance bounded by $\sigma_{\L_{\beta_k}}^2$. Moreover, $\nabla_x \mathcal{L}_{\beta_k}(\cdot,y_k)$ is $L_k$-Lipschitz by Lemma \ref{lem:Lipschitz5.4} and $g$ is convex, thus $\phi_k$ is $L_k$-weakly convex. Hence, the assumptions of \cite[Corollary 3.6]{Davis2019} apply to $\phi_k(x)$ and thus, $\mathbb{E}[\phi_k(x_{t^*,k,p}) \leq \phi_k(x_{x_k})]$,
\begin{equation*}
    \mathbb{E}[\|G_{1/L_k}(x_{t^*,k,p})\|^2] \leq 18(1+\frac{1}{\sqrt{2}})^2 \max\left\{\frac{2 L_k \Omega_{k}}{T_{k,p}}, \sigma_{\L_{\beta_k}} \sqrt{\frac{2 L_k \Omega_{k}}{T_{k,p}}} \right\},
\end{equation*}
holds, by using \cite[Theorem 4.5]{drusvyatskiy2018error} to connect the norm of the proximal gradient mapping and the norm of the gradient of the Moreau envelope. We note that the two events in $F_{k,p}^c$ can be simultaneously implied by the same event via a mild modification of the proof of \cite[Theorem 4.5]{drusvyatskiy2018error}, and thus combining this inequality with \ref{lem:proximal_grad5.5} $(iii)$ and Markov's inequality yields the result.
\end{proof}

Given the probabilistic bound in the previous lemma, we are now able to establish worst-case complexity bounds for Algorithm \ref{alg:innersolve} which hold with high probability. In the following, we use the terminology ``full oracle evaluations" to denote computation of $\nabla f(x)$ (and possible $c(x)$ and $J(x)$, when the constraints are stochastic) as opposed to a stochastic approximation of these quantities.

\begin{lemma}\label{lem:complexity_inner_stochastic5.9} (Inner-loop Complexity for Stochastic Proximal Gradient)
Let Assumptions \ref{assum:smoothness}, \ref{assum:stochastic_first_oracle} and \ref{assum:Unbiasedness_and_Bounded_Variance} hold and fix outer iteration $k$ for Algorithm \ref{alg:al}. Let the conditions of Lemma \ref{lem:aug_lag_grad_upperbound_stochastic5.8} hold. Let $C_k := 72(1+\frac{1}{\sqrt{2}})^2\max\left\{2 L_k \Omega_{k}, \sigma_{\L_{\beta_k}} \sqrt{2 L_k \Omega_{k}} \right\}$. Then, with probability 1, the inner-loop solver terminates finitely. In addition, with probability $1-\hat\delta$, the total number of evaluations required to satisfy the stopping criteria of Algorithm \ref{alg:innersolve} is no more than
\begin{align*}
   &\max\left\{\left\lceil\frac{r^2 C_k^2 - T_1 \eta_k^4 \hat{\delta}^2}{\eta_k^4 \hat{\delta}^2(r-1)}\right\rceil,T_1\right\} \text{\quad \; stochastic oracle evaluations,} \\
    &\max\left\{\left\lceil\frac{2}{\log r}\,\log\!\Big(\tfrac{C_k}{\eta_k^2\hat\delta\sqrt{T_1}}\Big)\right\rceil, 1\right\} \text{ \quad full oracle evaluations.}
\end{align*}
\end{lemma}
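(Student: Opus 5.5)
The plan is to turn the per-round failure probability from Lemma \ref{lem:aug_lag_grad_upperbound_stochastic5.8} into a geometric-growth argument over the rounds $p$ of Algorithm \ref{alg:innersolve}.

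With $T_{k,p} = r^{p-1}T_1$, that lemma gives
\[
\mathbb{P}(F_{k,p}) \le \frac{72(1+\tfrac{1}{\sqrt{2}})^2}{\eta_k^2}\max\Big\{\tfrac{2L_k\Omega_k}{T_{k,p}},\ \sigma_{\L_{\beta_k}}\sqrt{\tfrac{2L_k\Omega_k}{T_{k,p}}}\Big\}.
\]
Once $T_{k,p}\ge 1$, the right-hand side is at most $C_k/(\eta_k^2\sqrt{T_{k,p}})$. This holds because $a/T \le a/\sqrt{T}$ for $T\ge1$, so each term in the max is dominated by the corresponding term of $C_k$ divided by $\sqrt{T_{k,p}}$.

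\textbf{Finite termination with probability one.} The loop continues past round $p$ only if $F_{k,1},\dots,F_{k,p}$ all occur. Hence
\[
\mathbb{P}(\text{no termination}) \le \inf_p \mathbb{P}(F_{k,p}) \le \inf_p \frac{C_k}{\eta_k^2\sqrt{r^{p-1}T_1}} = 0,
\]
since $T_{k,p}\to\infty$. This uses only the per-round bound, with no independence between rounds. It also applies to the final round as a bound conditional on the past, because each round restarts from $x_k$ with fresh samples and the lemma's bound is deterministic in those inputs.

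\textbf{High-probability count.} Let $p^\star$ be the smallest $p$ with $C_k/(\eta_k^2\sqrt{T_1 r^{p-1}})\le\hat\delta$, i.e.
\[
p^\star - 1 = \Big\lceil \tfrac{2}{\log r}\log\big(\tfrac{C_k}{\eta_k^2\hat\delta\sqrt{T_1}}\big)\Big\rceil,
\]
floored at the trivial case. The event that the algorithm has not stopped by round $p^\star$ is contained in $F_{k,p^\star}$, whose probability is at most $\hat\delta$. So with probability $1-\hat\delta$ the loop runs at most $p^\star$ rounds.

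Each round needs one full-oracle check of the stopping test, since evaluating $\dist(\cdot,\partial g)$ and $\phi_k$ requires exact quantities. This gives the stated logarithmic count of full oracle evaluations. I expect some off-by-one bookkeeping in the exponent, depending on how one indexes the rounds.

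The total number of stochastic evaluations is
\[
\sum_{p\le p^\star} T_1 r^{p-1} = T_1\,\frac{r^{p^\star}-1}{r-1}.
\]
Using $T_1 r^{p^\star-1} \le r\cdot C_k^2/(\eta_k^4\hat\delta^2)$, which holds because $p^\star$ is minimal (the previous round's size falls below the threshold $C_k^2/(\eta_k^4\hat\delta^2)$), we get
\[
T_1 r^{p^\star} \le \frac{r^2C_k^2}{\eta_k^4\hat\delta^2},
\]
and therefore
\[
\sum_{p\le p^\star} T_1 r^{p-1} \le \frac{r^2C_k^2/(\eta_k^4\hat\delta^2) - T_1}{r-1}.
\]
This is exactly the stated expression. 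The max with $T_1$ covers the case where the first round already succeeds.

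\textbf{Main obstacle.} The delicate step is making this last inequality match precisely. The minimality of $p^\star$ must be used to bound the final (largest) batch by $r$ times the threshold, and the geometric sum then contributes a second factor of $r$ through $r^{p^\star}$. I would handle the case $C_k/(\eta_k^2\hat\delta\sqrt{T_1})\le 1$ separately to obtain the $T_1$ and $1$ floors.
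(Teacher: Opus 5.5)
Your proposal follows the paper's proof essentially step for step: the per-round Markov bound $\mathbb{P}(F_{k,p})\le C_k/(\eta_k^2\sqrt{T_{k,p}})$ decaying like $r^{-p/2}$, a threshold round $p^\star$ where this bound falls below $\hat\delta$, one full-oracle check per round, and a geometric sum controlled by the minimality of $p^\star$. There are two small differences. You get finite termination from $\mathbb{P}(\bigcap_p F_{k,p})\le\inf_p\mathbb{P}(F_{k,p})=0$ rather than from Borel--Cantelli, which is simpler and equally valid. You also justify the $T_{k,p}\ge 1$ step and the conditioning on earlier failed rounds, both of which the paper leaves implicit.

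The off-by-one you anticipate is genuine. With $T_{k,1}=T_1$ as in Algorithm~\ref{alg:innersolve}, your argument yields $1+\lceil\frac{2}{\log r}\log(\frac{C_k}{\eta_k^2\hat\delta\sqrt{T_1}})\rceil$ full-oracle checks, which is one more than the stated count. The paper's proof has the same slip: it sums rounds $p=0,\dots,p^\star$ while counting only $p^\star$ checks. So this is bookkeeping rather than a gap in your approach.
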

\begin{proof}

By the update rule of Algorithm \ref{alg:innersolve}, the number of proximal stochastic iterations at iteration $p$ is given by $T_{k,p} = r^p T_1$. By Lemma \ref{lem:aug_lag_grad_upperbound_stochastic5.8}, we have 
\begin{equation} \label{eq:failureprob}
    \mathbb{P}(F_{k,p}) \le \frac{C_k}{\eta_k^2 \sqrt{T_{k,p}}}
    = \frac{C_k}{\eta_k^2 \sqrt{T_1}}\, r^{-\frac{p}{2}}.
\end{equation}
Then,
\begin{equation*}
    \sum_{p=0}^{\infty}\mathbb{P}(F_{k,p})
    \le \frac{C_k}{\eta_k^2\sqrt{T_1}}\sum_{p=0}^{\infty} r^{-\frac{p}{2}}<\infty,
\end{equation*}
and thus by the Borel-Cantelli Lemma, we have that $\mathbb{P}(F_p\ \text{infinitely often})=0$. Therefore, the algorithm is guaranteed to converge when $T_{p,k}$ is sufficiently large, which yields the first result.

Next, recalling \eqref{eq:failureprob}, it follows that the probability of success is at least $1 - \hat{\delta}$ whenever
\begin{equation*}
    p \;\ge\; \left\lceil\frac{2}{\log r}\,\log\!\Big(\tfrac{C_k}{\eta_k^2\hat\delta\sqrt{T_1}}\Big)\right\rceil := p^*.
\end{equation*}
We note that this is the number of full oracle evaluations, as we require one full evaluation at each iteration of Algorithm \ref{alg:innersolve} to check the stopping criteria. Next, the total number of proximal stochastic gradient iterations needed to satisfy the stopping criteria of Algorithm \ref{alg:innersolve}, with probability at least $1- \hat{\delta}$, is no more than
\begin{equation*}
    T_1 \sum_{p=0}^{p^*} r^p = T_1 \frac{r^{p^*+1}-1}{r-1} \leq \frac{r^2 \frac{C_k^2}{\eta_k^4 \hat{\delta}^2} - T_1}{r-1} = \frac{r^2 C_k^2 - T_1 \eta_k^4 \hat{\delta}^2}{\eta_k^4 \hat{\delta}^2(r-1)}.
\end{equation*}
\end{proof}

If, in addition to Assumptions \ref{assum:stochastic_first_oracle} and \ref{assum:Unbiasedness_and_Bounded_Variance}, we have the stronger assumption \ref{assum:Mean-squared_Smoothness}, then we can obtain an analogous result when PStorm \cite{Li2023} is used as the stochastic method in Algorithm \ref{alg:innersolve}.





\begin{lemma}\label{pstorm_inner_stochastic}(Inner-loop Complexity for PStorm)
Let Assumptions \ref{assum:smoothness}, \ref{assum:stochastic_first_oracle}, \ref{assum:Unbiasedness_and_Bounded_Variance} and \ref{assum:Mean-squared_Smoothness} hold and fix outer iteration $k$ for Algorithm \ref{alg:al}. Let $\phi_k$, $\Omega_k$, and $F_{k,p}$ be defined as in Lemma \ref{lem:aug_lag_grad_upperbound_stochastic5.8}, and let $\sigma_{\L_{\beta}}$ and $L_{\L_{\beta}}$ be defined as in lemma \ref{lem:Lipschitz_stochastic5.7}. Let the conditions of Lemma \ref{lem:aug_lag_grad_upperbound_stochastic5.8} hold. Let $A_k := 1920^\frac{3}{2}3^{\frac{3}{2}}(\frac{24^2}{10})^{\frac{1}{2}}(\max\{\Omega_k, 1\})$ denotes the numerical constant in Lemma $3$ from \cite{Li2023}. Then, with probability 1, the inner-loop solver terminates finitely. In addition, with probability at least $1-\hat\delta$, the total number of evaluations required to satisfy the stopping criteria of Algorithm \ref{alg:innersolve} is no more than
\begin{align*}
   &\max\left\{\left\lceil\frac{r^2 A_k\sigma_{\L_{\beta}}L_{\L_{\beta}} - T_1 \eta_k^3 \hat{\delta}^\frac{3}{2}}{\eta_k^3 \hat{\delta}^\frac{3}{2}(r-1)}\right\rceil,T_1\right\} \text{\quad \; stochastic oracle evaluations,} \\
    &\max\left\{\left\lceil\frac{1}{\log r}\log\left(\frac{A_k\,\sigma_{\L_{\beta_k}}\,L_{\L_{\beta_k}}}{\eta_k^3\,\hat{\delta}^\frac{3}{2}\,T_1}\right)\right\rceil, 1\right\} \quad \text{ full oracle evaluations.}
\end{align*}

\end{lemma}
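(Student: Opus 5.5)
The proof runs exactly parallel to that of Lemma \ref{lem:complexity_inner_stochastic5.9}; the only change is that the $T^{-1/2}$ failure rate of stochastic proximal gradient is replaced by the faster $T^{-2/3}$-type rate of PStorm. The first step is to obtain, for a fixed outer iteration $k$ and inner round $p$, a bound on $\mathbb{P}(F_{k,p})$.

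Lemma \ref{lem:Lipschitz_stochastic5.7} gives that $\Delta_{\beta_k}(\cdot,y_k,\zeta)$ is unbiased with variance at most $\sigma_{\L_{\beta_k}}^2$. Under Assumption \ref{assum:Mean-squared_Smoothness}, it is also mean-squared $L_{\L_{\beta_k}}$-smooth. In addition, $\phi_k$ is $L_k$-weakly convex and bounded below, by Lemma \ref{lem:Llowerbound}, with gap $\Omega_k$. These are precisely the hypotheses of Lemma 3 of \cite{Li2023}.

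Applying that lemma to PStorm run for $T_{k,p}$ iterations, with an output index chosen uniformly at random, should give a bound of the form
\[
\mathbb{E}\bigl[\|G_{1/L_k}(x_{k,p})\|^2\bigr] \le \Bigl(\frac{A_k\,\sigma_{\L_{\beta_k}} L_{\L_{\beta_k}}}{T_{k,p}}\Bigr)^{2/3},
\]
with the numerical constant absorbed into $A_k$. Next, exactly as in Lemma \ref{lem:aug_lag_grad_upperbound_stochastic5.8}, I would use Lemma \ref{lem:proximal_grad5.5}(iii) and the modified error-bound argument from \cite{drusvyatskiy2018error}. These show that both events in $F_{k,p}^c$, namely the stationarity condition and the monotonicity condition, are implied by a smallness condition on the gradient mapping. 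Markov's inequality then gives a failure probability
\[
\mathbb{P}(F_{k,p}) \le \frac{c\,(A_k \sigma_{\L_{\beta_k}} L_{\L_{\beta_k}})^{2/3}}{\eta_k^2\, T_{k,p}^{2/3}}.
\]
This constant bookkeeping is the main obstacle. The stated bound, with $\eta_k^3\hat\delta^{3/2}$, suggests the authors instead apply Markov to $\|G\|^3$, or take a square root, to get $\mathbb{P}(F_{k,p}) \lesssim \bigl(A_k\sigma L/(\eta_k^3 T_{k,p})\bigr)^{2/3}$. I would arrange the exponents so that requiring $\mathbb{P}(F_{k,p})\le\hat\delta$ reads $T_{k,p} \ge A_k\sigma_{\L_{\beta_k}} L_{\L_{\beta_k}}/(\eta_k^3\hat\delta^{3/2})$, absorbing any remaining numerical factors into $A_k$.

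Finite termination with probability one follows from Borel--Cantelli. Since $T_{k,p}=r^pT_1$, the probabilities $\mathbb{P}(F_{k,p})$ decay like $r^{-2p/3}$ and are therefore summable. Hence $F_{k,p}$ occurs only finitely often almost surely, and Algorithm \ref{alg:innersolve} breaks at a finite $p$.

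For the high-probability count, success holds with probability at least $1-\hat\delta$ once
\[
r^pT_1 \ge \frac{A_k\sigma_{\L_{\beta_k}} L_{\L_{\beta_k}}}{\eta_k^3\hat\delta^{3/2}},
\]
that is, once $p \ge p^* := \bigl\lceil \frac{1}{\log r}\log\bigl(\frac{A_k\sigma L}{\eta_k^3\hat\delta^{3/2}T_1}\bigr)\bigr\rceil$. Each round uses one full oracle evaluation to check the stopping test, which gives the stated full-oracle count, with the max with $1$ covering the case of an immediate break.

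The stochastic oracle count is the geometric sum
\[
T_1\sum_{p=0}^{p^*} r^p = T_1\frac{r^{p^*+1}-1}{r-1}.
\]
Using $r^{p^*}\le r\,A_k\sigma L/(\eta_k^3\hat\delta^{3/2}T_1)$, this is at most $\bigl(r^2A_k\sigma L - T_1\eta_k^3\hat\delta^{3/2}\bigr)/\bigl(\eta_k^3\hat\delta^{3/2}(r-1)\bigr)$. Taking the max with $T_1$ covers the case $p^*=0$.
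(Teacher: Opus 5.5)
Your overall route is the paper's: an expected-residual bound for the PStorm output from Lemma 3 of \cite{Li2023}, Markov's inequality with $T_{k,p}=r^pT_1$, Borel--Cantelli for almost-sure termination, and the same $p^*$ and geometric sum. The step that does not go through is your handling of the second event in $F_{k,p}$. A small stationarity measure at $x_{k,p}$, whether $\|G_{1/L_k}(x_{k,p})\|$ or the Moreau-envelope gradient, says nothing about how $\phi_k(x_{k,p})$ compares with $\phi_k(x_k)$. A nonconvex $\phi_k$ can have nearly stationary points whose values exceed $\phi_k(x_k)$, and a stochastic method started at $x_k$ can end near one. You also cannot recover the descent event from an in-expectation function-value bound: $\phi_k(x_{k,p})-\phi_k(x_k)$ has no sign, so Markov's inequality does not apply to it. The paper's proof of this lemma does not attempt such an implication. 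It takes the descent condition $\L_{\beta_k}(x_{k,p},y_k)+g(x_{k,p})\le\L_{\beta_k}(x_k,y_k)+g(x_k)$ as holding for every $p$, so $F_{k,p}$ reduces to the stationarity event alone. You need to make that hypothesis explicit, or give a real argument for the descent event; as written, your bound on $\mathbb{P}(F_{k,p})$ is not justified.

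Two smaller remarks. First, the detour through the gradient mapping is unnecessary and slightly mismatched. Lemma \ref{lem:proximal_grad5.5}(iii) controls the residual at $\psi_{1/L_k}(x_{k,p})$, not at $x_{k,p}$, which is the point the stopping test in Algorithm \ref{alg:innersolve} examines. Its factor $2$ also becomes a factor $4$ after squaring, and this cannot be absorbed into $A_k$, which is an explicitly defined constant. The paper instead uses Lemma 3 of \cite{Li2023} directly as $\mathbb{E}[\dist(-\nabla_x\L_{\beta_k}(x_{k,p},y_k),\partial g(x_{k,p}))^2]\le(A_k\sigma_{\L_{\beta_k}}L_{\L_{\beta_k}}/T_{k,p})^{2/3}$. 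Second, the exponent bookkeeping you call the main obstacle is not one. With $X=A_k\sigma_{\L_{\beta_k}}L_{\L_{\beta_k}}$, Markov applied to the squared residual gives $\eta_k^{-2}(X/T_{k,p})^{2/3}=(X/(\eta_k^3T_{k,p}))^{2/3}$. Requiring this to be at most $\hat\delta$ is exactly $T_{k,p}\ge X/(\eta_k^3\hat\delta^{3/2})$, so no Markov on cubes or other rearrangement is needed.
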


\begin{proof}
As in the previous Lemma, we take the update rule of Algorithm \ref{alg:innersolve} as $T_{k,p} = r^p T_1$. By Lemma 3 of \cite{Li2023} and the definition
of $A_k$, the output $x_{k,p}$ of the $p$-th PStorm call
satisfies
\[\mathbb{E}\!\left[\operatorname{dist}\!\left(-\nabla_x\L_{\beta_k}(x_{k,p},y_k),\partial g(x_{k,p})\right)^2\right]\leq \left(\frac{A_k\,\sigma_{\L_{\beta_k}}\, L_{\L_{\beta_k}}}{T_{k,p}}\right)^{2/3}.\]

In addition, the condition 
\[\L_{\beta_k}(x_{k,p},y_k)+g(x_{k,p})\leq\L_{\beta_k}(x_k,y_k)+g(x_k)\]
is assumed to hold for every $p$, so the failure event
in $F_{k,p}$ cannot occur. Hence, by Markov's inequality,
\[
\mathbb{P}(F_{k,p})
\leq
\frac{1}{\eta_k^2}
\left(\frac{A_k\,\sigma_{\L_{\beta_k}}\,L_{\L_{\beta_k}}}{T_{k,p}}\right)^{\frac{2}{3}} = \left(\frac{A_k\,\sigma_{\L_{\beta_k}}\,L_{\L_{\beta_k}}}{\eta_k^3T_{1}}\right)^{\frac{2}{3}}r^{\frac{-2p}{3}}.
\]
Since $r>1$, by similar argument as the previous Lemma, we have that $\mathbb{P}\bigl(F_{p}\;\text{infinitely often}\bigr)=0,$
and the inner-loop solver terminates finitely almost surely.

Next, it is sufficient to choose $p$ so that 
\[ \left(\frac{A_k\, \sigma_{\L_{\beta_k}}\,L_{\L_{\beta_k}}}{\eta_k^3T_1r^{p}}\right)^{2/3} \leq \hat{\delta}. \]

Thus, the number of PStorm calls needed that the probability of success is at least $1-\hat{\delta}$ whenever 
\[ p \geq \left\lceil \frac{1}{\log r} \log\left( \frac{A_k\,\sigma_{\L_{\beta_k}}\,L_{\L_{\beta_k}}}{\eta_k^3\hat{\delta}^{\frac{3}{2}}T_1}\right)\right\rceil:= p^*\]

Finally, the total number of PStorm iterations needed to satisfy the stopping criteria of Algorithm \ref{alg:innersolve}, with probability at least $1- \hat{\delta}$, is no more than
\begin{equation*}
    T_1 \sum_{p=0}^{p^*} r^p \leq \frac{r^2 A_k\sigma_{\L_{\beta}}L_{\L_{\beta}} - T_1 \eta_k^3 \hat{\delta}^\frac{3}{2}}{\eta_k^3 \hat{\delta}^\frac{3}{2}(r-1)}.
\end{equation*}

\end{proof}



Finally, we combine the outer-loop complexity from Theorem \ref{thm:complexity_outer5.3} with the inner-loop analysis from Lemma \ref{lem:complexity_inner_stochastic5.9} to obtain the overall complexity of our proposed algorithm in the stochastic setting.

\begin{theorem}\label{thm:overall-epochs} (Overall Complexity for Stochastic Results using Stochastic Proximal Gradient)
Let Assumptions \ref{assum:smoothness}, \ref{assum:regularity}, \ref{assum:stochastic_first_oracle}, and \ref{assum:Unbiasedness_and_Bounded_Variance} hold. Let the conditions of Lemma \ref{lem:aug_lag_grad_upperbound_stochastic5.8} hold. Then, if the objective is stochastic and the constraints are deterministic, with probability at least $1-\delta$, the total number of evaluations required by Algorithm \ref{alg:al} to return an $\epsilon$-approximate feasible solution is 
\begin{equation*}
   \tilde{\mathcal O}\!\left(\delta^{-2}\epsilon^{-6}\right) \text{\; stochastic oracle evaluations and } \mathcal{O}\left(\log(\epsilon^{-1})\log(\delta^{-1}\epsilon^{-1})\right) \text{ full oracle evaluations.}
\end{equation*}

If both the objective and the constraints are stochastic, then with probability at least $1-\delta$, the total number of evaluations required by Algorithm \ref{alg:al} to return an $\epsilon$-approximate feasible solution is 
\begin{equation*}
   \tilde{\mathcal O}\!\left(\delta^{-2}\epsilon^{-7}\right) \text{\; stochastic oracle evaluations and } \mathcal{O}\left(\log(\epsilon^{-1})\log(\delta^{-1}\epsilon^{-1})\right) \text{ full oracle evaluations.}
\end{equation*}
\end{theorem}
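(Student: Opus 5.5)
The plan is to combine the outer-loop bound of Theorem \ref{thm:complexity_outer5.3} with the inner-loop bound of Lemma \ref{lem:complexity_inner_stochastic5.9}, applied at every outer iteration. The failure probabilities are split by a union bound.

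\textbf{Step 1: outer iterations and failure budget.} Let $K(\epsilon)$ be the bound from Theorem \ref{thm:complexity_outer5.3}(ii). It is $\mathcal{O}(\log^2(\epsilon^{-1}))$, since $\hat N(\epsilon)=\mathcal{O}(\log(\epsilon^{-1}))$ and $\log(\beta_{\textrm{upper}}/\beta_1)=\mathcal{O}(\log(\epsilon^{-1}))$. We allot failure probability $\hat\delta=\delta/K(\epsilon)$ to each inner solve. On the intersection of the success events, every $x_{k+1}$ satisfies both the inexactness condition and the monotonicity condition, so the hypotheses of Lemma \ref{lem:Lupperbound} and Theorem \ref{thm:complexity_outer5.3} hold. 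That intersection has probability at least $1-\delta$.

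\textbf{Step 2: uniform bounds on $\Omega_k$ and $C_k$.} We need to control $\Omega_k$ uniformly. Lemmas \ref{lem:Lupperbound} and \ref{lem:Llowerbound} give
\[
\Omega_k\le \L_{\beta_1}(x_1,y_1)+g(x_1)-f_{\min}-g_{\min}+\cL\hat N(\epsilon)+\tfrac{1}{2\beta_1}\ymax^2=\mathcal{O}(\log(\epsilon^{-1})).
\]
By Lemma \ref{lem:Lipschitz5.4}, $L_k=\mathcal{O}(\beta_k)$. By Lemma \ref{lem:beta_upperbound5.2}, $\beta_k\le\gamma\beta_{\textrm{upper}}=\mathcal{O}(\epsilon^{-1})$. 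By Lemma \ref{lem:Lipschitz_stochastic5.7}, $\sigma_{\L_{\beta_k}}=\mathcal{O}(1)$ when $\sigma_c=0$, and $\sigma_{\L_{\beta_k}}=\mathcal{O}(\beta_k)$ otherwise.

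Plugging into $C_k$, the maximum is dominated by $\sigma_{\L_{\beta_k}}\sqrt{2L_k\Omega_k}$ in the relevant regime, though to be safe we keep both terms. This gives:
\begin{itemize}
  \item deterministic constraints: $C_k=\tilde{\mathcal O}(\epsilon^{-1/2})$, or $\tilde{\mathcal O}(\epsilon^{-1})$ if the $2L_k\Omega_k$ term dominates;
  \item stochastic constraints: $C_k=\tilde{\mathcal O}(\epsilon^{-3/2})$.
\end{itemize}
The tolerance is $\eta_k=\Theta(\epsilon)$ once $k\ge N(\epsilon)$. Here I would choose $\eta_k'$ of order $\epsilon$, as in Theorem \ref{complexity_inner_deter}.

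\textbf{Step 3: per-iteration and total cost.} Lemma \ref{lem:complexity_inner_stochastic5.9} bounds the stochastic evaluations per outer iteration by $\mathcal{O}(C_k^2\eta_k^{-4}\hat\delta^{-2})$. This is $\tilde{\mathcal O}(\epsilon^{-1}\epsilon^{-4}\delta^{-2})$ in the first case, and the expected order $\epsilon^{-6}$ when the larger term of the max is kept. In the second case it is $\tilde{\mathcal O}(\epsilon^{-3}\epsilon^{-4}\delta^{-2})=\tilde{\mathcal O}(\epsilon^{-7}\delta^{-2})$.

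The main obstacle is matching the stated exponents exactly. This requires tracking which branch of the max in $C_k$ is active for each $\sigma_c$ regime, and absorbing the $K(\epsilon)^2$ factor from $\hat\delta^{-2}$ into the $\tilde{\mathcal O}$. I would bound $C_k$ crudely by the sum of both terms and verify the exponent $6$ (respectively $7$) directly.

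Multiplying by $K(\epsilon)$ outer iterations only adds logarithmic factors. The full-oracle count is $K(\epsilon)$ times $\mathcal{O}(\log(C_k\eta_k^{-2}\hat\delta^{-1}))=\mathcal{O}(\log(\delta^{-1}\epsilon^{-1}))$. This yields the stated $\mathcal{O}(\log(\epsilon^{-1})\log(\delta^{-1}\epsilon^{-1}))$ after absorbing the extra logarithmic factors from $K(\epsilon)$, as the paper does.
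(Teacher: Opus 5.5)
Your proposal is essentially the paper's proof: a union bound with $\hat\delta=\delta/j_{\max}$, the uniform bound $\Omega_k=\mathcal{O}(\log(\epsilon^{-1}))$ from Lemmas \ref{lem:Lupperbound} and \ref{lem:Llowerbound}, and substituting $L_k=\mathcal{O}(\epsilon^{-1})$, $\sigma_{\L_{\beta_k}}\in\{\mathcal{O}(1),\mathcal{O}(\epsilon^{-1})\}$ and $\eta_k$ of order $\epsilon$ into Lemma \ref{lem:complexity_inner_stochastic5.9}. Two points need fixing.

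\emph{The max in $C_k$.} No hedging is needed. You have $C_k^2=\mathcal{O}(\max\{L_k^2\Omega_k^2,\ \sigma_{\L_{\beta_k}}^2L_k\Omega_k\})$, which up to logs is the paper's $\max\{L_k^2,\sigma_{\L_{\beta_k}}^2L_k\}$.
\begin{itemize}
\item With deterministic constraints the $L_k\Omega_k$ branch is active, so $C_k=\tilde{\mathcal O}(\epsilon^{-1})$ and the count is $\epsilon^{-6}$. Your remark that the $\sigma$ branch dominates is wrong in this case, and so is the alternative $C_k=\tilde{\mathcal O}(\epsilon^{-1/2})$.
\item With stochastic constraints the $\sigma$ branch is active, so $C_k=\tilde{\mathcal O}(\epsilon^{-3/2})$ and the count is $\epsilon^{-7}$.
\end{itemize}

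\emph{The full-oracle count.} Your $K(\epsilon)=\mathcal{O}(\log^2(\epsilon^{-1}))$ is the correct reading of Theorem \ref{thm:complexity_outer5.3}. With it, what your argument actually gives is $\mathcal{O}(\log^2(\epsilon^{-1})\log(\delta^{-1}\epsilon^{-1}))$ full oracle evaluations. A logarithmic factor cannot be absorbed into a plain $\mathcal{O}$, so that step is not valid as written. The paper reaches the stated form only by asserting $j_{\max}=\mathcal{O}(\log(\epsilon^{-1}))$. That assertion contradicts Theorem \ref{thm:complexity_outer5.3} and the paper's own remark after it that the outer loop needs $\mathcal{O}(\log^2(\epsilon^{-1}))$ iterations. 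So this is a looseness in the statement itself rather than a flaw specific to your route.
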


\begin{proof}
Let $j$ denote the number of times Algorithm \ref{alg:al} calls the inner solver, and let $j_{\max}$ be the maximum number invokes the inner solver. By Theorem \ref{thm:complexity_outer5.3}, we have $j \leq j_{\max}$ and 
\[
    j_{\max} = \mathcal{O}(\log(\epsilon^{-1})).
\]

Now consider one call of the inner solver, corresponding to outer iteration $k$. By Lemma \ref{lem:complexity_inner_stochastic5.9}, with probability at least $1-\hat{\delta}$, the inner solver terminates in no more than
\begin{equation*}
    \left\lceil\frac{r^2 C_k^2-T_1\eta_k^4 \hat{\delta^2}}{\eta_k^4 \hat{\delta}^2(r-1)}\right\rceil,
\end{equation*}
stochastic oracle evaluations. Since, $\eta_k = \Omega(\epsilon)$, $C_k = \mathcal{O}(\max\{L_k \Omega_k, \sigma_{L_{\beta_k}} \sqrt{L_k \Omega_k}\})$ and $\Omega_k = \mathcal{O}(\log(\epsilon^{-1}))$ by Lemmas  \ref{lem:Lupperbound} and \ref{lem:Llowerbound}, a single call to the inner solver requires at most
\begin{equation*}
    \tilde{\mathcal{O}}\left(\frac{ \max\{L_k^2, \sigma_{L_{\beta_k}}^2 L_k\}}{\epsilon^4 \hat{\delta}^2}\right),
\end{equation*}
with probability at least $1-\hat{\delta}$ stochastic oracle evaluations.

To ensure that the probability of failure for the output of Algorithm \ref{alg:al} across all inner solves is at most $\delta$, 
let $\hat{\delta} := \delta/j_{\max}$. Then, by the union bound,
\[
     \mathbb{P}(\text{All inner solves terminate quickly}) \geq  1-j\hat{\delta}
    \;\geq \; 1 - j_{\max}\hat{\delta} \;=\; 1-\delta.
\]

Therefore, with probability at least $1-\delta$, the total number of stochastic evaluations required by Algorithm \ref{alg:al} is bounded by
\begin{equation*}
    \tilde{\mathcal{O}}\left(\frac{j_{\max}^{3} \max\{L_k^2, \sigma_{L_{\beta_k}}^2 L_k\}}{\epsilon^4 \delta^2}\right) = \tilde{\mathcal{O}}\left(\frac{\max\{L_k^2, \sigma_{L_{\beta_k}}^2 L_k\}}{\epsilon^4 \delta^2}\right),
\end{equation*}

Finally, we consider $\max\{L_k^2, \sigma_{L_{\beta_k}}^2 L_k\}$ separately depending on whether the constraints are deterministic or stochastic. When the constraints are deterministic, $\sigma_{L_\beta}$ term is a constant, so $\max\{L_k^2, \sigma_{L_{\beta_k}}^2 L_k\} = \mathcal O (\beta_k^2) = \mathcal O (\epsilon^{-2})$, and thus the overall complexity is $\tilde{\mathcal O}(\delta^{-2}\epsilon^{-6})$. When the constraints are stochastic, $\sigma_{L_\beta} = \mathcal O (\beta_k) = \mathcal{O}(\epsilon^{-1})$, and therefore the overall complexity is $\tilde{\mathcal O}(\delta^{-2}\epsilon^{-7})$.

Finally, by Lemma \ref{lem:complexity_inner_stochastic5.9} with probability at least $1-\hat{\delta}$, the number of full oracle evaluations required for a single call of the inner solver is no more than
\begin{equation*}
    \left\lceil\frac{2}{\log r}\,\log\!\Big(\tfrac{C_k}{\eta_k^2\hat\delta\sqrt{T_1}}\Big)\right\rceil.
\end{equation*}
Recalling the definition of $\hat{\delta}$ above and $j_{\max}$, it follows that, with probability at least $1-\delta$, the total number of full oracle evaluations is no more than
\begin{equation*}
    j_{\max} \left\lceil\frac{2}{\log r}\,\log\!\Big(\tfrac{C_k}{\eta_k^2\hat\delta\sqrt{T_1}}\Big)\right\rceil = \mathcal{O}(\log(\epsilon^{-1}) \log(\epsilon^{-1} \delta^{-1})),
\end{equation*}
regardless of whether the constraints are deterministic or stochastic.
\end{proof}

\begin{corollary}\label{pstorm_Overall} (Overall Complexity for Stochastic Results using PStorm)



Let Assumptions \ref{assum:smoothness}, \ref{assum:regularity}, \ref{assum:stochastic_first_oracle}, \ref{assum:Unbiasedness_and_Bounded_Variance}, and \ref{assum:Mean-squared_Smoothness} hold. Let the conditions of Lemma \ref{pstorm_inner_stochastic} hold. Then, if the objective is stochastic and the constraints are deterministic, with probability at least $1-\delta$, the total number of evaluations required by Algorithm \ref{alg:al} to return an $\epsilon$-approximate feasible solution is 
\begin{equation*}
   \tilde{\mathcal O}\!\left(\delta^{-\frac{3}{2}}\epsilon^{-4}\right) \text{\; stochastic oracle evaluations and } \mathcal{O}\left(\log(\epsilon^{-1})\log(\delta^{-1}\epsilon^{-1})\right) \text{ full oracle evaluations.}
\end{equation*}

If both the objective and the constraints are stochastic, then with probability at least $1-\delta$, the total number of evaluations required by Algorithm \ref{alg:al} to return an $\epsilon$-approximate feasible solution is 
\begin{equation*}
   \tilde{\mathcal O}\!\left(\delta^{-\frac{3}{2}}\epsilon^{-5}\right) \text{\; stochastic oracle evaluations and } \mathcal{O}\left(\log(\epsilon^{-1})\log(\delta^{-1}\epsilon^{-1})\right) \text{ full oracle evaluations.}
\end{equation*}

\end{corollary}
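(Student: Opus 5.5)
The plan is to mirror the proof of Theorem \ref{thm:overall-epochs} exactly, substituting Lemma \ref{pstorm_inner_stochastic} for Lemma \ref{lem:complexity_inner_stochastic5.9}. There are four steps.

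\textbf{Step 1: count the inner solves.} By Theorem \ref{thm:complexity_outer5.3}, the number of outer iterations, and hence of calls to Algorithm \ref{alg:innersolve}, is at most $j_{\max} = \mathcal{O}(\log^2(\epsilon^{-1}))$, which is polylogarithmic. We set $\hat\delta := \delta/j_{\max}$. The union bound then guarantees that, with probability at least $1-\delta$, every inner solve satisfies the high-probability bound of Lemma \ref{pstorm_inner_stochastic}.

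\textbf{Step 2: bound one inner solve.} Fix outer iteration $k$. By Lemma \ref{pstorm_inner_stochastic}, the stochastic oracle count is
\[
\mathcal{O}\!\left(\frac{A_k\,\sigma_{\L_{\beta_k}} L_{\L_{\beta_k}}}{\eta_k^3\hat\delta^{3/2}}\right).
\]
We bound each factor.
\begin{itemize}
\item The constant $A_k$ is proportional to $\max\{\Omega_k,1\}$. By Lemmas \ref{lem:Lupperbound} and \ref{lem:Llowerbound}, $\Omega_k = \mathcal{O}(\log(\epsilon^{-1}))$. The monotonicity hypothesis of Lemma \ref{lem:Lupperbound} holds because Algorithm \ref{alg:innersolve} only accepts $x_{k,p}$ with $\phi_k(x_{k,p})\le\phi_k(x_k)$.
\item By Algorithm \ref{alg:al} and the assumption on $N(\epsilon)$, $\eta_k=\Theta(\epsilon)$ on the relevant iterations, so $\eta_k^{-3} = \mathcal{O}(\epsilon^{-3})$.
\item The factor $\hat\delta^{-3/2}$ contributes $\delta^{-3/2}$ up to polylogarithmic factors from $j_{\max}$.
\end{itemize}

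\textbf{Step 3: split on the type of constraints.} By Lemma \ref{lem:Lipschitz_stochastic5.7}, $L_{\L_{\beta_k}} = \sqrt{3L_f^2+3L_J^2\ymax^2+3\beta_k^2L_J^2} = \mathcal{O}(\beta_k)$. Lemma \ref{lem:beta_upperbound5.2} gives $\beta_k \le \gamma\beta_{\textrm{upper}} = \mathcal{O}(\epsilon^{-1})$.
\begin{itemize}
\item \emph{Deterministic constraints} ($\sigma_c=0$): here $\sigma_{\L_{\beta_k}} = \sqrt{3}\sigma_f$ is constant, so $\sigma L = \mathcal{O}(\epsilon^{-1})$. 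Each inner solve then costs $\tilde{\mathcal{O}}(\delta^{-3/2}\epsilon^{-4})$.
\item \emph{Stochastic constraints}: here $\sigma_{\L_{\beta_k}} = \mathcal{O}(\beta_k)=\mathcal{O}(\epsilon^{-1})$, so $\sigma L=\mathcal{O}(\epsilon^{-2})$. Each inner solve then costs $\tilde{\mathcal{O}}(\delta^{-3/2}\epsilon^{-5})$.
\end{itemize}
Multiplying by $j_{\max}$ adds only logarithmic factors, which gives the stated stochastic oracle bounds.

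\textbf{Step 4: count full oracle evaluations.} Each inner solve needs
\[
\left\lceil \frac{1}{\log r}\log\!\left(\frac{A_k\sigma L}{\eta_k^3\hat\delta^{3/2}T_1}\right)\right\rceil = \mathcal{O}(\log(\epsilon^{-1}\delta^{-1}))
\]
full evaluations, since the argument of the logarithm is polynomial in $\epsilon^{-1}$ and $\delta^{-1}$. Multiplying by $j_{\max}$ gives $\mathcal{O}(\log(\epsilon^{-1})\log(\delta^{-1}\epsilon^{-1}))$, matching the claimed bound in the same sense as Theorem \ref{thm:overall-epochs}.

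\textbf{Main obstacle.} The key subtlety is the bookkeeping of the $\hat\delta$ and $\Omega_k$ dependence inside $A_k$, and checking that the monotonicity condition used in Lemma \ref{pstorm_inner_stochastic} is compatible with Lemma \ref{lem:Lupperbound}. I would treat both as assumed via the stated conditions. Given that, the remaining work is arithmetic on the exponents of $\epsilon$.
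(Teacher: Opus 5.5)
Your proposal follows the paper's route. The paper gives no separate proof of this corollary; it obtains it by rerunning the proof of Theorem~\ref{thm:overall-epochs} with Lemma~\ref{pstorm_inner_stochastic} in place of Lemma~\ref{lem:complexity_inner_stochastic5.9}. Your bookkeeping for the stochastic oracle counts is correct: $\sigma_{\L_{\beta_k}}L_{\L_{\beta_k}}=\mathcal{O}(\epsilon^{-1})$ or $\mathcal{O}(\epsilon^{-2})$, times $\eta_k^{-3}=\mathcal{O}(\epsilon^{-3})$, with $A_k$, $j_{\max}$ and $\hat\delta=\delta/j_{\max}$ contributing only polylogarithmic factors beyond $\delta^{-3/2}$.

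The one step that does not follow is Step~4. In Step~1 you correctly read off from Theorem~\ref{thm:complexity_outer5.3}(ii) that $j_{\max}=\mathcal{O}(\log^2(\epsilon^{-1}))$. The bound there is the product of $\hat N(\epsilon)$ with the number $\lceil\log(\beta_{\textrm{upper}}/\beta_1)/\log\gamma\rceil$ of penalty increases, and the remark after that theorem says the same. Multiplying by your per-solve count $\mathcal{O}(\log(\delta^{-1}\epsilon^{-1}))$ therefore gives $\mathcal{O}(\log^2(\epsilon^{-1})\log(\delta^{-1}\epsilon^{-1}))$ full oracle evaluations, not the claimed $\mathcal{O}(\log(\epsilon^{-1})\log(\delta^{-1}\epsilon^{-1}))$. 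Because this count is stated with $\mathcal{O}$ rather than $\tilde{\mathcal{O}}$, the extra logarithm cannot be absorbed. The paper reaches a single logarithm only because its proof of Theorem~\ref{thm:overall-epochs} asserts $j_{\max}=\mathcal{O}(\log(\epsilon^{-1}))$, which is stronger than what Theorem~\ref{thm:complexity_outer5.3} provides. Appealing to ``the same sense as Theorem~\ref{thm:overall-epochs}'' just inherits that unproved step. To close it you must either prove a single-logarithm bound on the number of outer iterations, or state the full-oracle bound with the additional $\log(\epsilon^{-1})$ factor.

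You should also state explicitly the hypotheses on the forcing sequence that you and the paper both use tacitly. First, $\hat N(\epsilon)=\mathcal{O}(\log(\epsilon^{-1}))$ is needed both for $j_{\max}$ and for $\Omega_k$ via Lemma~\ref{lem:Lupperbound}, and it requires $N(\epsilon)=\mathcal{O}(\log(\epsilon^{-1}))$. Second, $\eta_k^{-3}=\mathcal{O}(\epsilon^{-3})$ requires a lower bound $\eta_k=\Omega(\epsilon)$. The assumption on $N(\epsilon)$ only gives $\eta_k\le\epsilon/2$; the lower bound instead comes from $\beta_k=\mathcal{O}(\epsilon^{-1})$ together with $\eta_k'$ not falling far below $\epsilon$ on the iterations actually executed.
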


\section{Numerical Experiments} \label{sec:experiments}
In this section, we present several numerical results. We evaluate the method in both deterministic and stochastic settings. The deterministic experiments investigate the performance of two choices of the Augmented Lagrangian method: whether the penalty parameter is increased consistently or adaptively, and whether a full or short dual step size is used. The stochastic experiments additionally examine the effectiveness of employing Algorithm \ref{alg:innersolve} to solve the inner subproblems versus a simple, fixed epoch strategy more commonly used in the literature \cite{Li2023}.

\subsection{Deterministic Experiments} \label{sec:detexperiments}

We evaluate the four Augmented Lagrangian variants on constrained optimization problems from the PyCUTEst library \cite{PyCUTEst2022}, which covers both convex and nonconvex objectives with equality and inequality constraints. For a problem, let $n$ and $m$ denote the numbers of variables and constraints, respectively, before introducing slack variables. To keep runtime manageable, we select problems satisfying $n +m < 10^3$. Each one-sided inequality constraint of the form $c_i(x) \leq 0$ is reformulated as $c_i(x)+s_i=0, s_i\geq 0,$ and the nonnegativity restrictions on the slack variables are imposed as bound constraints. In addition, we exclude problems without numerical objective values and certain problems which caused memory issues. All experiments were conducted on a workstation equipped with an Intel Core i9-14900KF processor and 32 GB of RAM. The computations were performed using the CPU only, without GPU acceleration. After applying these criteria, the final test set contains $485$ problems.

In the deterministic experiments, we use the L-BFGS-B algorithm as our inner-loop solver, implemented via the SciPy optimization library \cite{scipy2020}. For the outer-loop update rules, the four variants form a $2\times 2$ comparison between the penalty parameter update rule and the dual variable step size. The terms \emph{Full} and \emph{Short} refer to the dual variable step size, whereas \emph{Always} and \emph{Adaptive} refer to the penalty parameter update frequency. 

For the \emph{Full}-step variants, the multiplier update is performed in the same manner as in Algorithm \ref{alg:al}. For the \emph{Short}-step variants, the update is $y_{k+1} = y_k+\alpha_k^{\mathrm{short}} c(x_{k+1})$, with
\[ \alpha_k^{\mathrm{short}} = \min\left( \frac{\|c(x_1)\|\log^2 2} {\|c(x_{k+1})\|(k+1)\log^2(k+2)},\,1\right),\]
as proposed in \cite{Sahin2019} and variants of which are used in \cite{Li2021,Li2023}.

The two \emph{Always} variants increase the penalty parameter at every outer iteration according to $\beta_{k+1}=\gamma\beta_k$. On the other hand, the two \emph{Adaptive} variants implement the penalty parameter update as in Algorithm \ref{alg:al}.

To be more specific, we list them in order:

\begin{enumerate}[leftmargin=2em,itemsep=2pt,topsep=2pt]
  \item \textbf{Always–Full:} Update the dual variable every iteration using the full step, taking the penalty parameter $\beta$ as the dual step size; cap dual components by $10^9$ for numerical stability.
  \item \textbf{Always–Short:} Update every iteration using the short step size as in Sahin et al.\ \cite{Sahin2019}, which bounds the dual variable and yields favorable complexity guarantees.
  \item \textbf{Adaptive–Full (Algorithm \ref{alg:al}):} Use the full step ($\beta_k$ as dual step size) but update the dual variable only when constraint violation fails to decrease sufficiently.
  \item \textbf{Adaptive–Short:} As in Always–Short method, but update the dual variable using the adaptive scheme.
\end{enumerate}







We compare the methods using the performance profiles from Dolan and Mor\'e \cite{dolan2002benchmarking}.  Let \(\mathcal{P}\) denote the set of test problems and let \(\mathcal{S}\) denote the set of solver methods. For problem \(pr\in\mathcal{P}\) and solver \(s\in\mathcal{S}\), let
$t_{pr,s}$ be the number of gradient evaluations required to satisfy the termination condition for Algorithm \ref{alg:al}. For each problem solved by at least one method, define \[ t_{pr}^* := \min_{s\in\mathcal{S}} t_{pr,s}.\]

The performance ratio is defined as 
\begin{equation*}
    r_{pr,s} :=
    \begin{cases}
        \displaystyle
        \frac{t_{pr,s}}{t_{pr}^*},
        &
        
        \text{ if solve} \;s \;\text{solved problem } pr
        ;
        \\[10pt]
        100,
        &
        \text{otherwise}.
    \end{cases}
\end{equation*}
Thus, an unsuccessful run receives performance ratio of $100$ and does not count as solved for any performance factor. For $\alpha\geq 1$, define the performance profile of solver $s$ as
\begin{equation*}
\label{eq:performance-profile}
    \rho_s(\alpha)
    :=
    \frac{1}{|\mathcal{P}|}
    \left|
        \left\{
            pr\in\mathcal{P}:
            r_{pr,s}\leq\alpha
        \right\}
    \right|.
\end{equation*}
The value \(\rho_s(1)\) is the fraction of problems for which solver
\(s\) is the most efficient method, up to ties. Larger values of
\(\alpha\) measure the robustness of the solver over the test set.
We display the profiles for \(1\leq\alpha < 100\), which shows the proportion of problems each solver can solve up to 99 times of the shortest number of gradient evaluations.


Figure~\ref{fig:perfprofiles}  reports the performance profiles for four combinations of the termination tolerance and penalty-update parameters. The proposed Adaptive-Full method attains the highest performance profile throughout the displayed range in all four panels, indicating the best overall combination of efficiency and robustness on this test set. Its advantage becomes more pronounced when penalty increases are triggered more frequently and when the penalty growth factor is larger, suggesting that the method remains effective across a range of parameter choices. The Always-Full method is competitive under the looser termination tolerance, but its performance deteriorates under more aggressive penalty growth. The two short-step variants solve fewer problems within the evaluation budget and are significantly outperformed by the methods using a full dual stepsize across all experiments. Overall, these results suggest that combining a full dual step with an adaptive penalty-update rule provides a better balance between feasibility progress and inner-subproblem conditioning than either increasing the penalty parameter at every iteration or using a short dual step.

\begin{figure}[h]
  \centering

\resizebox{0.81\textwidth}{!}{%
\begin{minipage}{\textwidth}
  
  \begin{subfigure}{0.48\textwidth}
    \centering
    \includegraphics[width=\textwidth]{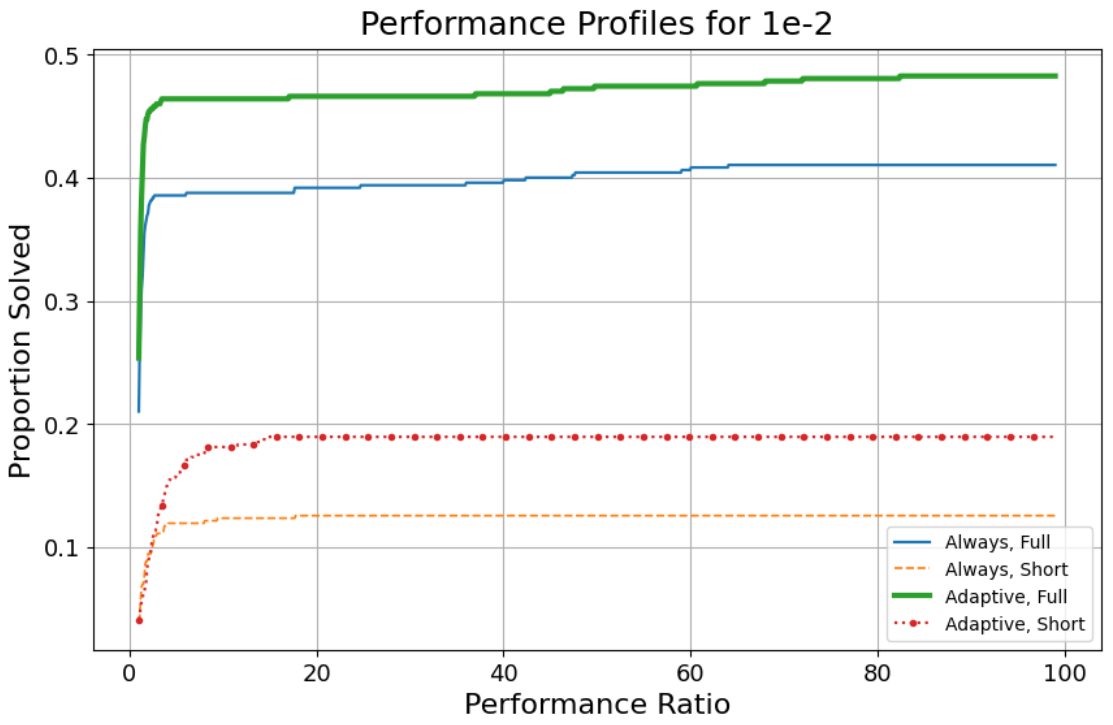}
    \caption{Tolerance $10^{-2}$ with $\tau = 0.8$ and $\gamma = 1.5$}
  \end{subfigure}\hfill
  \begin{subfigure}{0.48\textwidth}
    \centering
    \includegraphics[width=\textwidth]{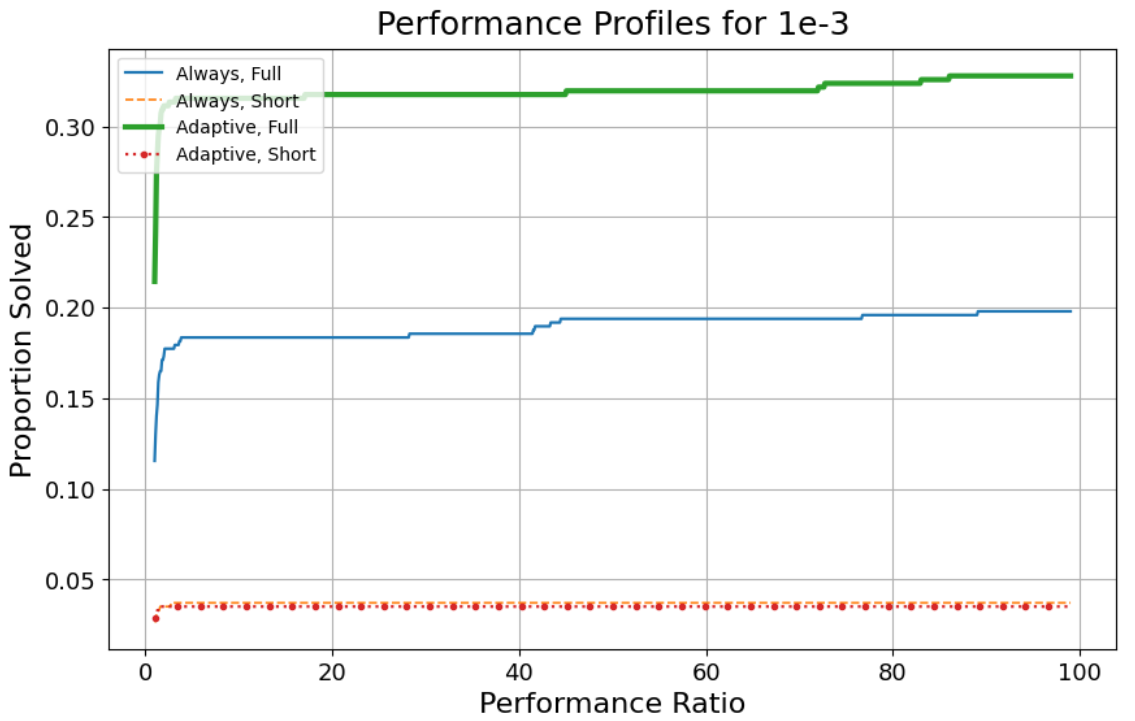}
    \caption{Tolerance $10^{-3}$ with $\tau = 0.8$ and $\gamma = 1.5$}
  \end{subfigure}

  \vspace{0.5em}

  \begin{subfigure}{0.48\textwidth}
    \centering
    \includegraphics[width=\textwidth]{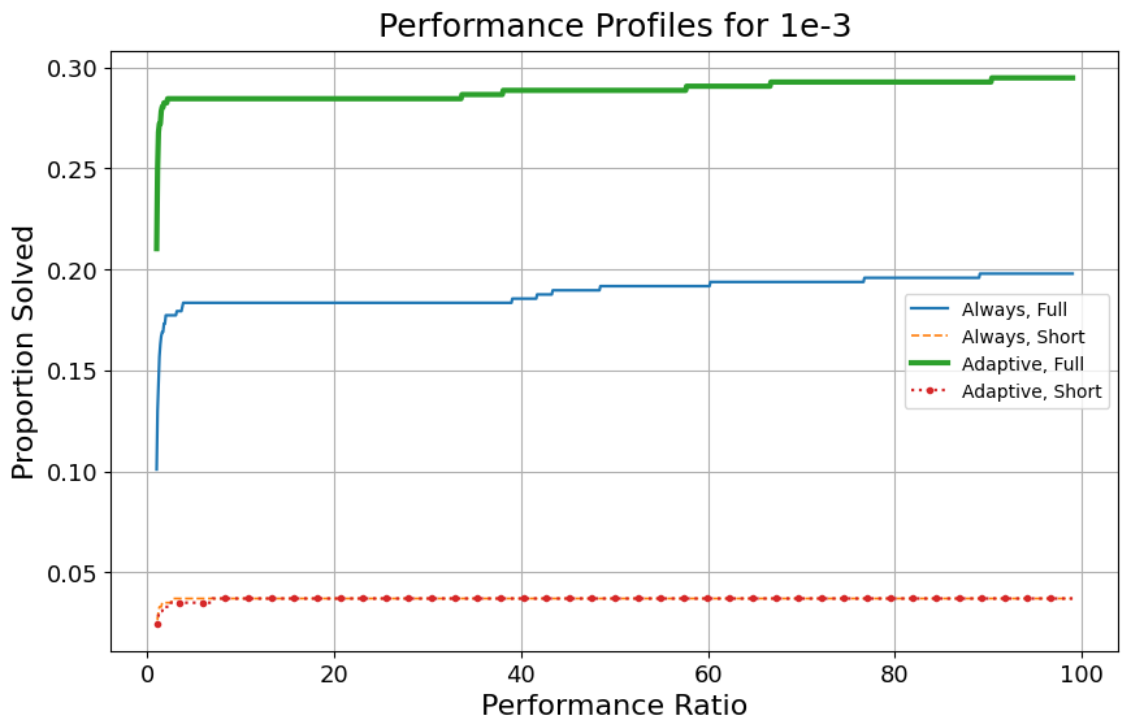}
    \caption{Tolerance $10^{-3}$ with $\tau = 0.5$ and $\gamma = 1.5$}
  \end{subfigure}\hfill
  \begin{subfigure}{0.48\textwidth}
    \centering
    \includegraphics[width=\textwidth]{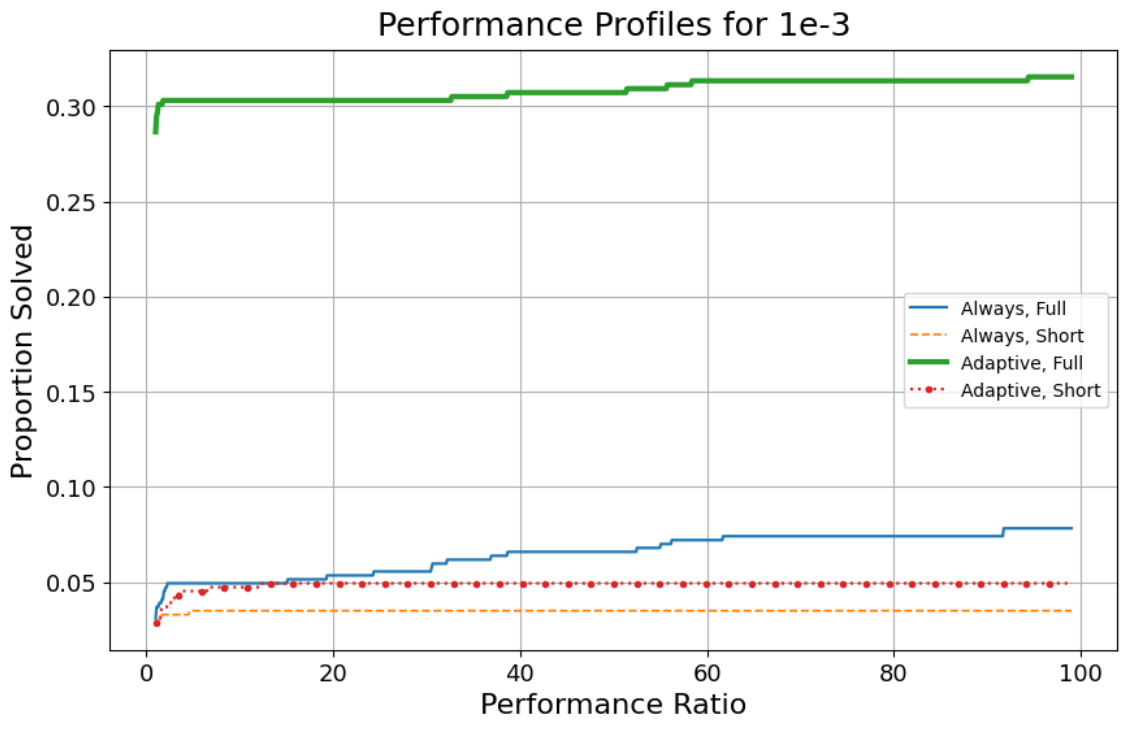}
    \caption{Tolerance $10^{-3}$ with $\tau = 0.8$ and $\gamma = 2$}
  \end{subfigure}

  \end{minipage}
}
\caption{Performance profiles $\rho_s(\alpha)$ for four dual variable update variants. The figures compare the performance profiles of four dual-variable update variants under different stopping tolerances and penalty growth factors. Panels (a)--(b) correspond to two stopping tolerances $10^{-2}$, and $10^{-3}$ respectively. (c) studies a more frequent penalty growth $\tau = 0.5$ with tolerance $10^{-3}$. (d) studies a more aggressive penalty growth factor $\gamma = 2$ with tolerance $10^{-3}$.}
  \label{fig:perfprofiles}

\end{figure}




Table \ref{tbl:final_beta} below lists a subset of the test problems on which the Always-Full method failed to reach the termination tolerances, while Algorithm \ref{alg:al} (Adaptive-Full) reached an approximate feasible stationary point. From the table, two patterns stand out: (i) the last (before termination) penalty parameter produced by the Always update method is typically several orders of magnitude larger than the penalty parameter under the Adaptive rule (see the column $\beta_{\text{Always}}/\beta_{\text{Adaptive}}\gg 1$), and (ii) the Always-Full method generally accumulates more gradient evaluations yet still fail to converge to the desired accuracy, whereas the Adaptive-Full method succeeds with generally many fewer gradient evaluations. These statistics indicate a systematic difference in how the two methods shape the inner subproblems. This phenomenon is likely due to the conditioning generated by the Augmented Lagrangian subproblem. When the penalty parameter grows aggressively, the curvature introduced by $\tfrac{\beta}{2}\,\|c(x)\|^{2}$ dominates, making the inner problem extremely difficult to solve. The Adaptive update method, by increasing the penalty parameter only when the constraint violation fails to decrease sufficiently, avoids this aggressive penalty parameter growth, which keeps the inner problems reasonably conditioned, and thereby achieves the stopping criteria on these instances. This evidence supports the incorporation of an adaptive penalty parameter update for practical Augmented Lagrangian methods.

\begin{table}[htbp]
\centering
\scriptsize
\setlength{\tabcolsep}{4pt}
\renewcommand{\arraystretch}{1.05}

\resizebox{\textwidth}{!}{%
\begin{tabular}{lccccc}
\toprule
& \multicolumn{2}{c}{Always-Full} & \multicolumn{2}{c}{Algorithm \ref{alg:al}} & \\
\cmidrule(lr){2-3}\cmidrule(lr){4-5}
Problem & Gradient Evaluations & Last $\beta$ & Gradient Evaluations & Last $\beta$ & $\beta_{\text{Always}}/\beta_{\text{Adaptive}}$ \\
\midrule
ALLINITC & 1703 & $3.41376\times 10^{22}$ & 541 & 83966.6 & $4.06562\times 10^{17}$ \\
BROWNALE & 1340 & $2.75095\times 10^{11}$ & 242 & 191.751 & $1.43465\times 10^{9}$ \\
BT4 & 1287 & $4.49548\times 10^{21}$ & 275 & 127.834 & $3.51665\times 10^{19}$ \\
BT6 & 1265 & $2.67046\times 10^{17}$ & 379 & 55977.7 & $4.77058\times 10^{12}$ \\
BT7 & 2503 & $3.36344\times 10^{25}$ & 762 & 7371.55 & $4.56273\times 10^{21}$ \\
BYRDSPHR & 1320 & $2.99699\times 10^{21}$ & 148 & 16.8341 & $1.78031\times 10^{20}$ \\
CB2 & 872 & $2.95282\times 10^{24}$ & 361 & 1456.11 & $2.02788\times 10^{21}$ \\
CB3 & 881 & $4.42922\times 10^{24}$ & 273 & 287.627 & $1.53992\times 10^{22}$ \\
CHACONN1 & 852 & $2.55411\times 10^{26}$ & 219 & 191.751 & $1.33199\times 10^{24}$ \\
CHACONN2 & 884 & $3.36344\times 10^{25}$ & 244 & 127.834 & $2.63110\times 10^{23}$ \\
\bottomrule
\end{tabular}%
}

\caption{First ten alphabetically ordered test problems with gradient evaluations and final penalty parameters where the Always-Full method fails but Algorithm \ref{alg:al} succeeds with tolerance $10^{-3}$, $\tau = 0.8$, and $\gamma = 1.5$.}
\label{tbl:final_beta}
\end{table}

\subsection{Stochastic Experiments} \label{subsec:stochasticexperiments}

We next evaluate the method in the stochastic setting. Following \cite{Li2023} and \cite{Yan2022}, we consider the finite sum Neyman-Pearson classification problem on the normalized Spambase dataset \cite{kelly_uci_ml_repository}. Let $\{a_i^{+}\}_i^{n^+}$ and $\{a_i^{-}\}_i^{n^-}$ denote the positive and negative class feature vectors, respectively. We solve 

\[
\begin{aligned}
\min_{x \in \mathbb{R}^n}\quad 
& \frac{1}{n^{+}}\sum_{i=1}^{n^{+}} 
  \log\!\Bigl(1+\exp\!\bigl(-x^{T}a_i^{+}\bigr)\Bigr) \\[4pt]
\text{s.t.}\quad 
& \frac{1}{n^{-}}\sum_{i=1}^{n^{-}}
  \log\!\Bigl(1+\exp\!\bigl(x^{\top}a_i^{-}\bigr)\Bigr) -\hat{c} \leq 0. 
\end{aligned}
\]
 The objective function is a logistic surrogate for false-negative error, whereas the constraint is a logistic surrogate to control false-positive error. Here, we consider a range of $\hat{c} \in \{0.6,0.7,0.8,0.9\}$. The dataset is divided into $80\%$ training and $20\%$ test observations using random sampling. Each feature is standardized using the mean and standard deviation computed from the training data and the same transformation is applied to the test set. Our results are averaged over 20 trials, for which the same randomly chosen $x_0$ is used, while $y_0 = 0$. For shorthand while plotting, we define \begin{equation*}
     R = \dist \left( -\nabla_x L(x, y), \partial g(x) \right) + \|c(x)\|.
 \end{equation*}
 We handle the inequality constraint via adding a slack variable $s$ and the constraint $s\geq 0$ and use projected stochastic gradient and PStorm \cite{Li2023} algorithm to solve the stochastic subproblem.

\begin{figure}[h]
    \centering

    \begin{subfigure}[b]{0.48\textwidth}
        \centering
        \includegraphics[width=\textwidth,height=4.14cm]{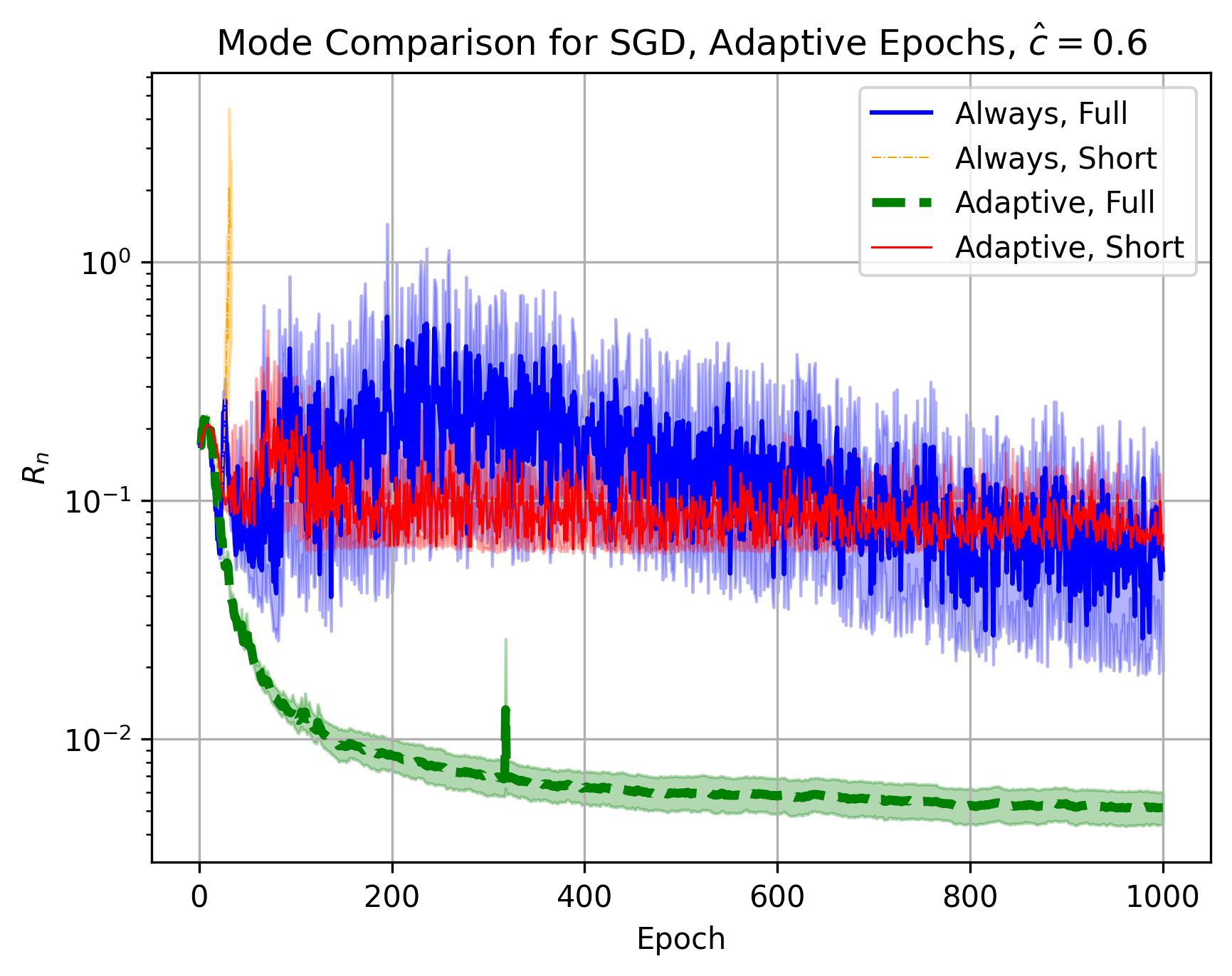}
        \caption{SGD, Adaptive epochs, Accuracy, $\hat{c} = 0.6$.}
        \label{fig:sgd_adap_acc_0.6}
    \end{subfigure}
    \hfill
    \begin{subfigure}[b]{0.48\textwidth}
        \centering
        \includegraphics[width=\textwidth,height=4.14cm]{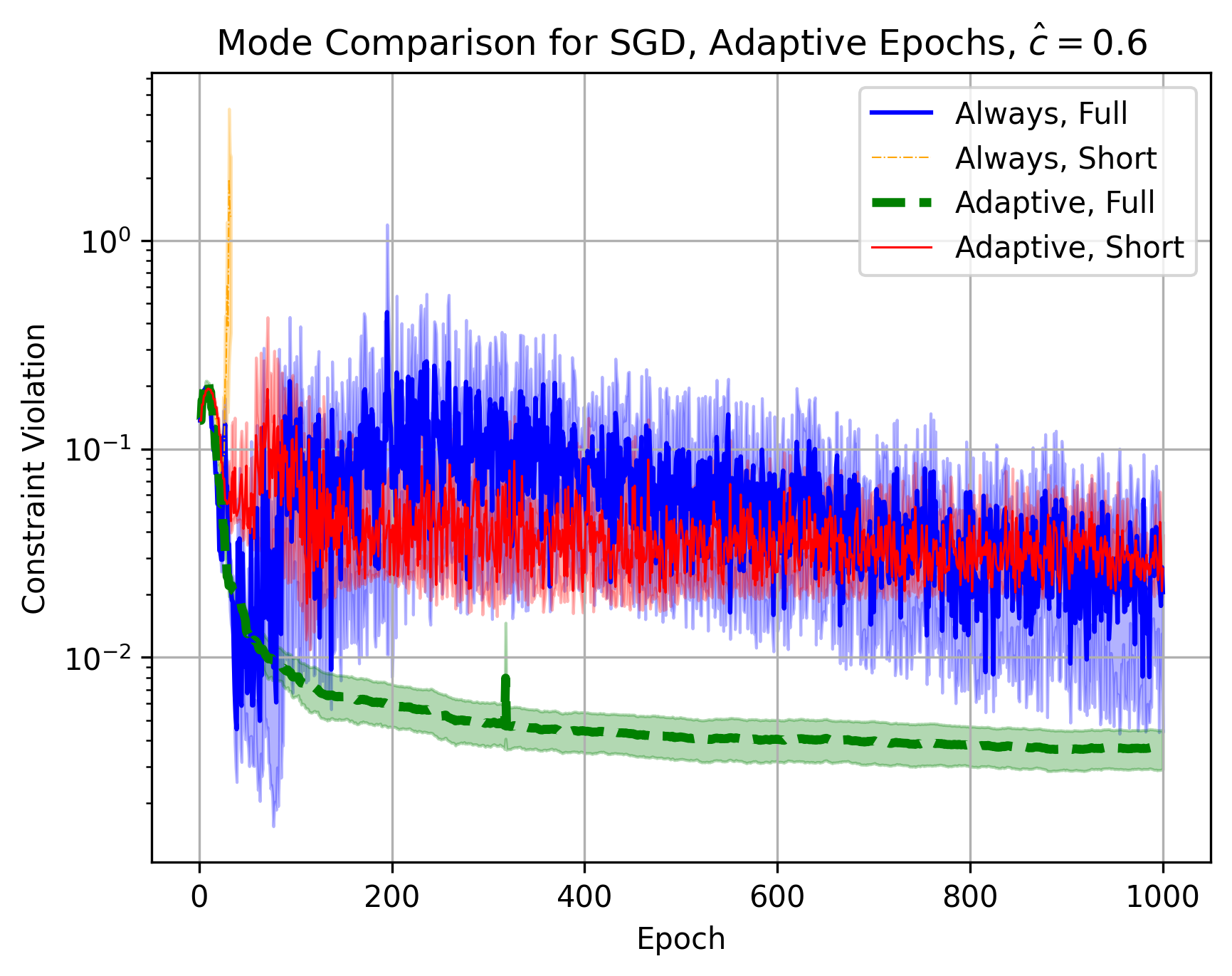}
        \caption{SGD, Adaptive epochs, Constr. violation, $\hat{c} = 0.6$.}
        \label{fig:sgd_adap_constr_0.6}
    \end{subfigure}

    \vspace{0.4cm} 

    \begin{subfigure}[b]{0.48\textwidth}
        \centering
        \includegraphics[width=\textwidth,height=4.14cm]{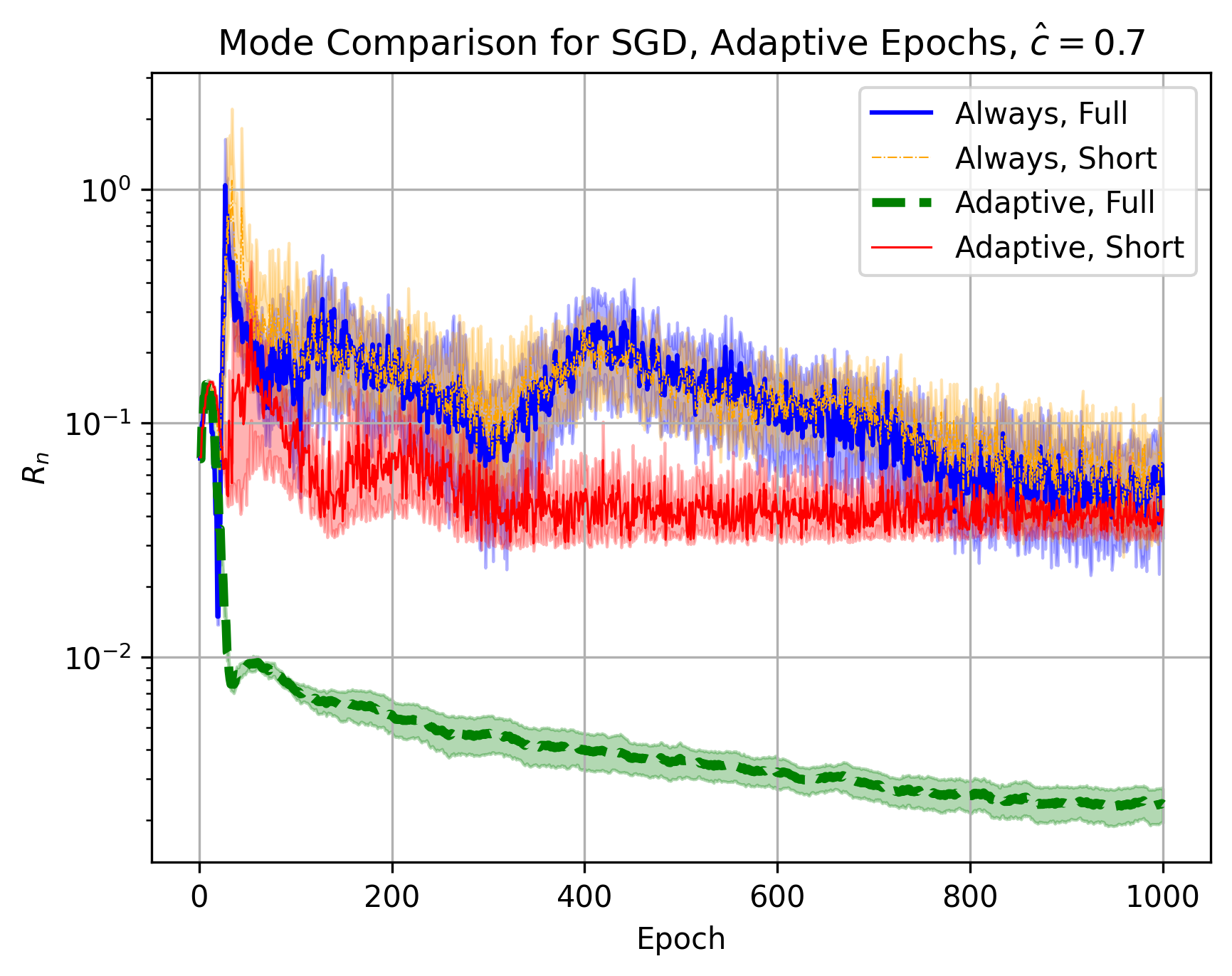}
        \caption{SGD, Adaptive epochs, Accuracy, $\hat{c} = 0.7$.}
        \label{fig:sgd_adap_acc_0.7}
    \end{subfigure}
    \hfill
    \begin{subfigure}[b]{0.48\textwidth}
        \centering
        \includegraphics[width=\textwidth,height=4.14cm]{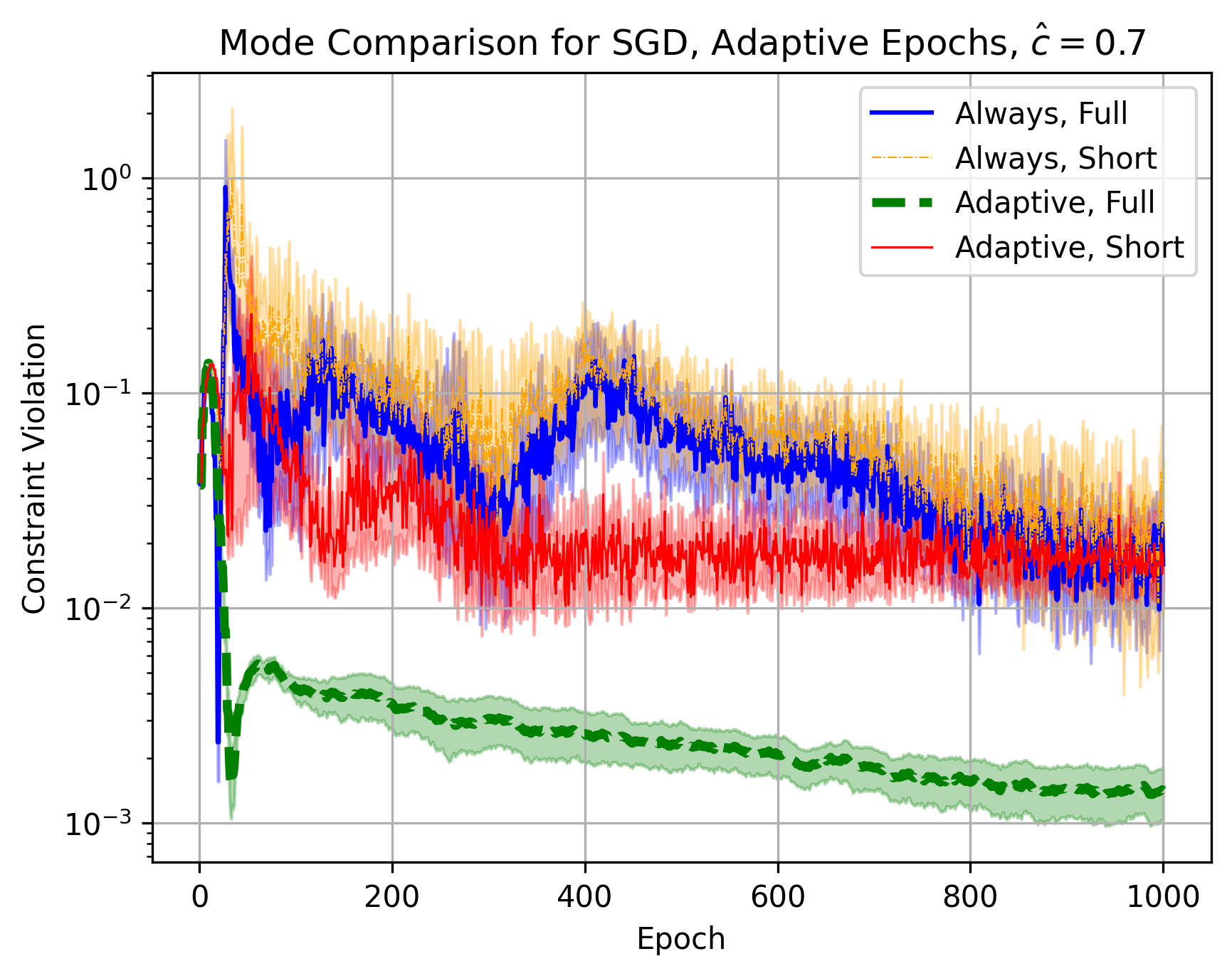}
        \caption{SGD, Adaptive epochs, Constr. violation, $\hat{c} = 0.7$.}
        \label{fig:sgd_adap_constr_0.7}
    \end{subfigure}

    \hfill

      \begin{subfigure}[b]{0.48\textwidth}
        \centering
        \includegraphics[width=\textwidth,height=4.14cm]{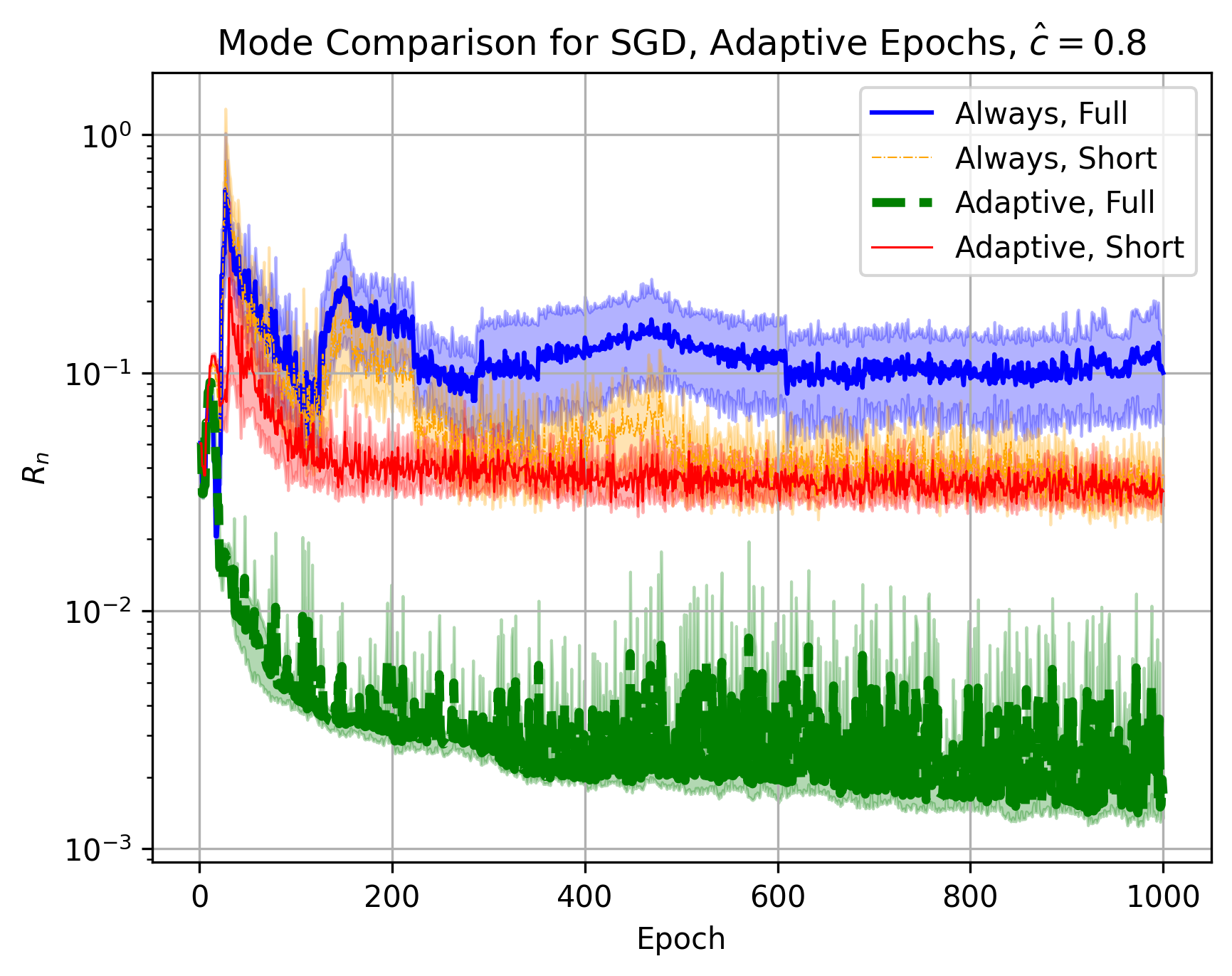}
        \caption{SGD, Adaptive epochs, Accuracy, $\hat{c} = 0.8$.}
        \label{fig:sgd_adap_acc_0.8}
    \end{subfigure}
    \hfill
    \begin{subfigure}[b]{0.48\textwidth}
        \centering
        \includegraphics[width=\textwidth,height=4.14cm]{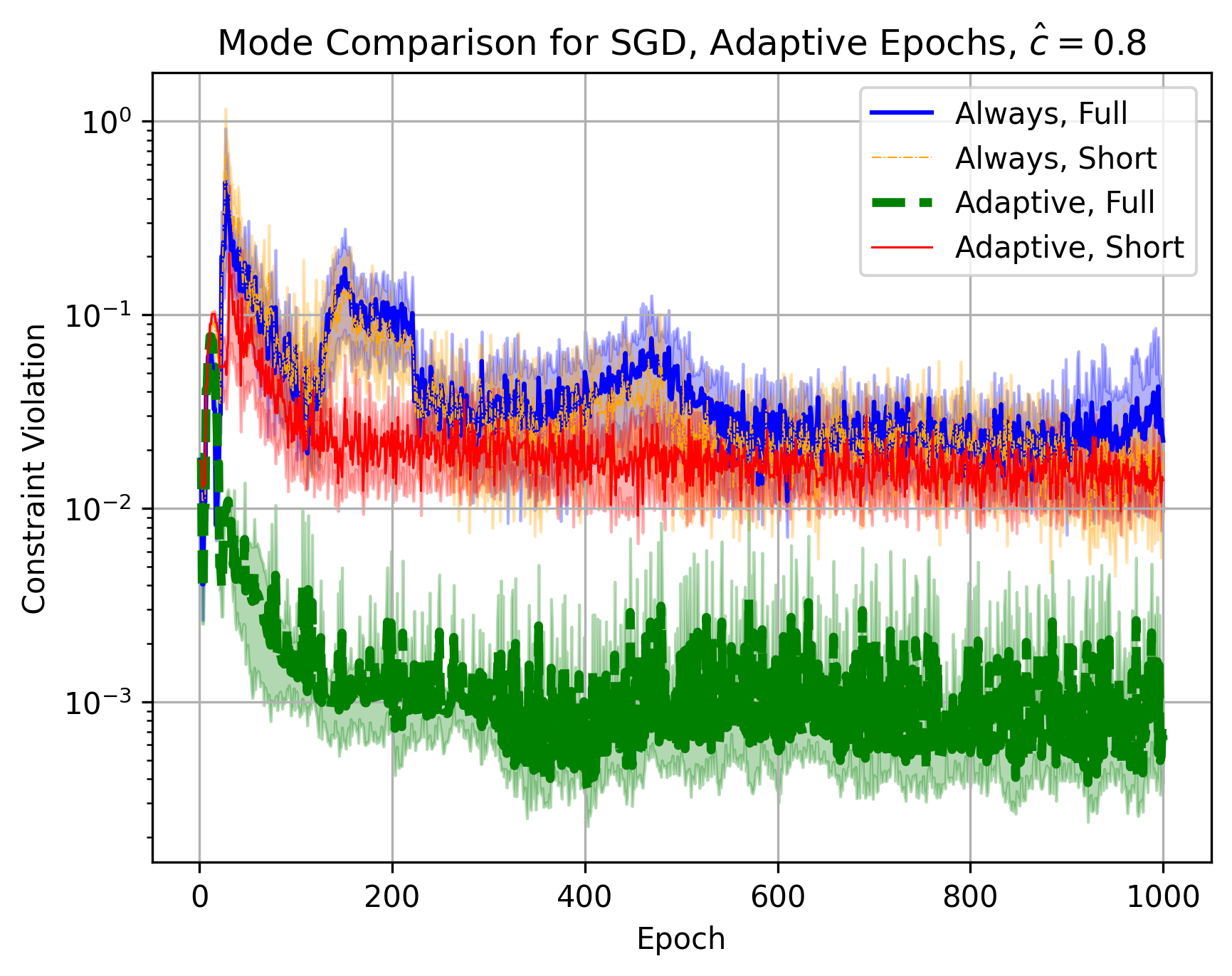}
        \caption{SGD, Adaptive epochs, Constr. violation, $\hat{c} = 0.8$.}
        \label{fig:sgd_adap_constr_0.8}
    \end{subfigure}

    \vspace{0.4cm} 

    \begin{subfigure}[b]{0.48\textwidth}
        \centering
        \includegraphics[width=\textwidth,height=4.14cm]{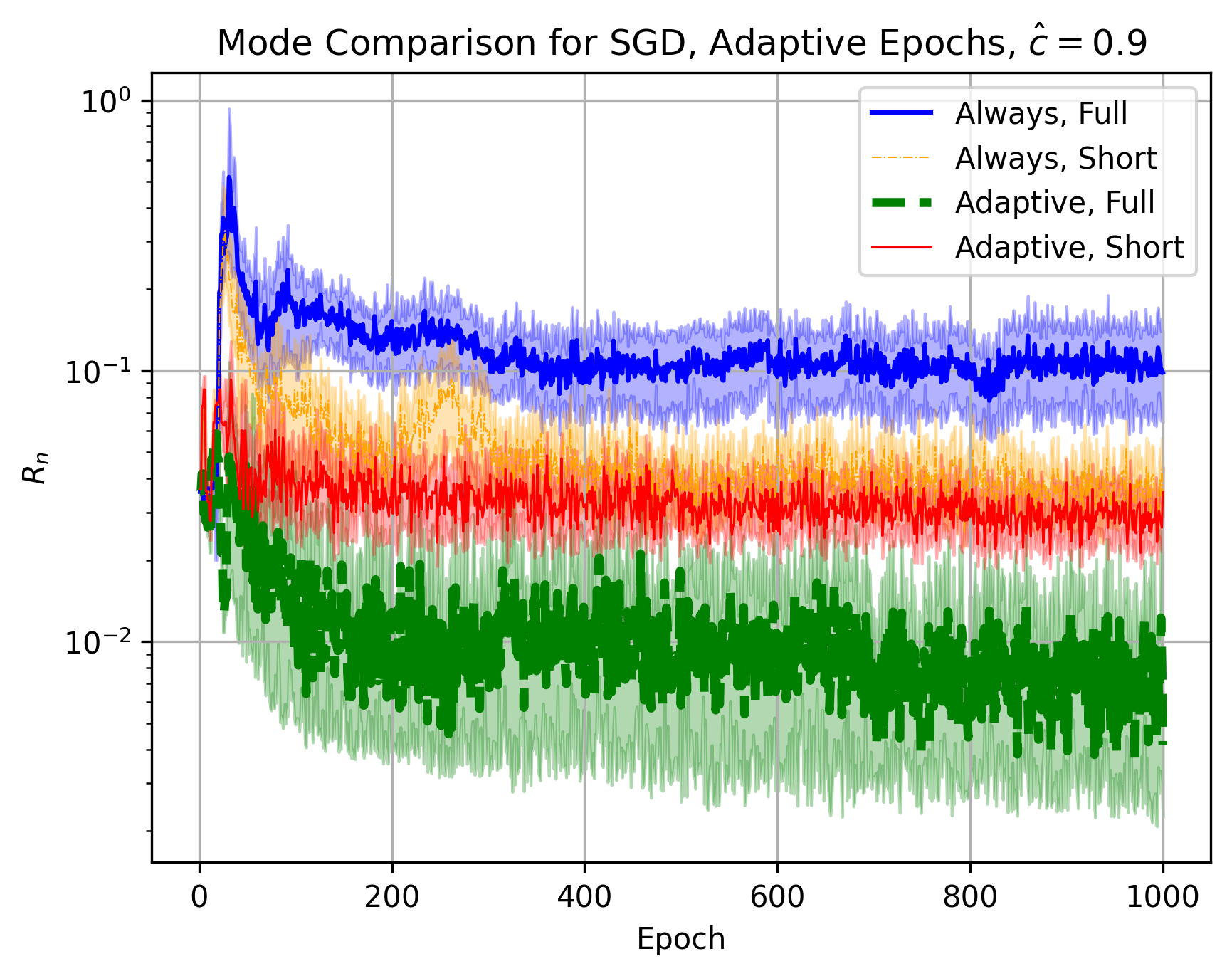}
        \caption{SGD, Adaptive epochs, Accuracy, $\hat{c} = 0.9$.}
        \label{fig:sgd_adap_acc_0.9}
    \end{subfigure}
    \hfill
    \begin{subfigure}[b]{0.48\textwidth}
        \centering
        \includegraphics[width=\textwidth,height=4.14cm]{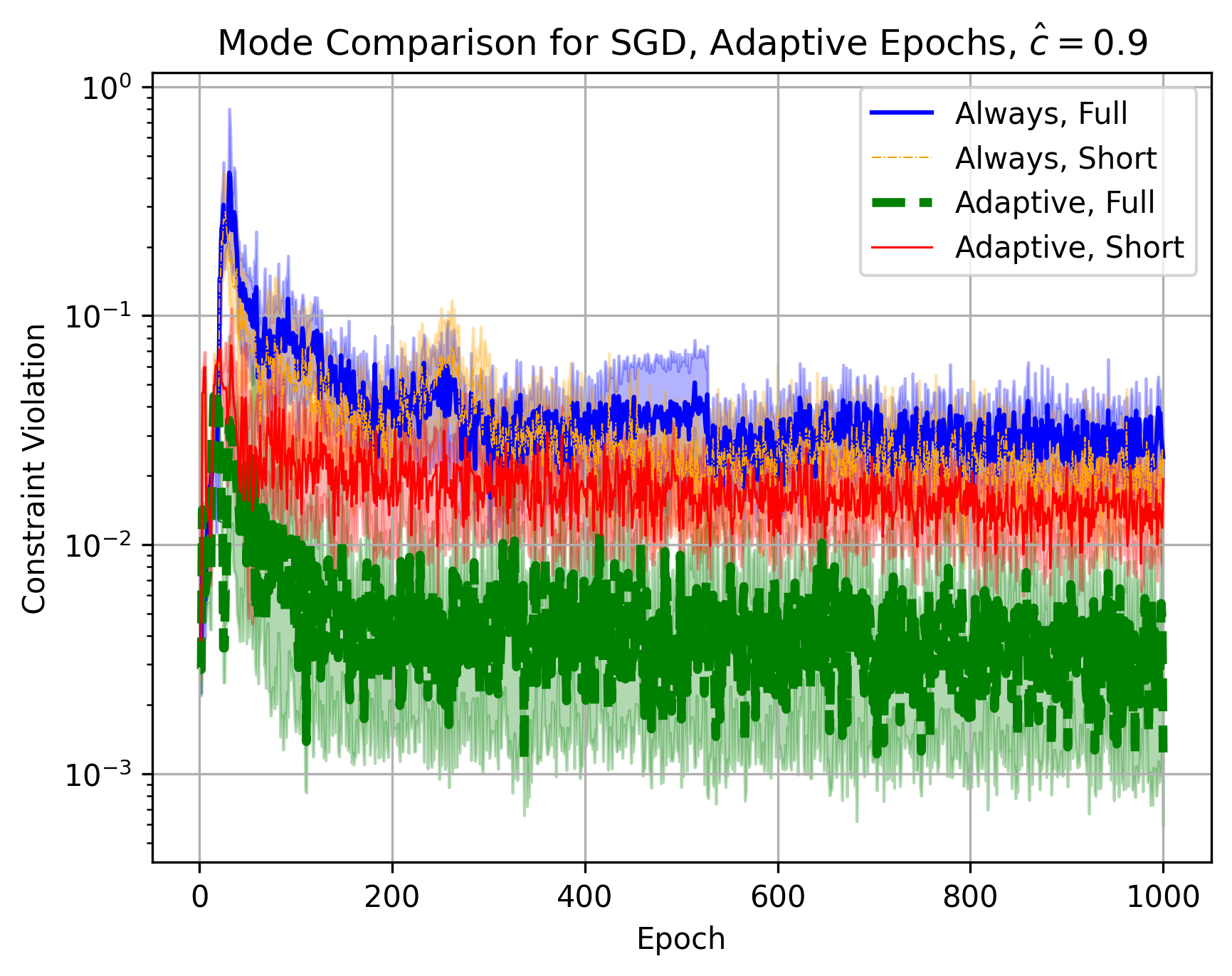}
        \caption{SGD, Adaptive epochs, Constr. violation, $\hat{c} = 0.9$.}
        \label{fig:sgd_adap_constr_0.9}
    \end{subfigure}
    
    \caption{Comparison of different outer loop modes for SGD with adaptive inner epochs varying $\hat{c}$. Average results over 20 trials with 95\% confidence intervals.}
    \label{fig:modes_comparison_all}
\end{figure}

\begin{figure}[h]
    \centering

    \begin{subfigure}[b]{0.48\textwidth}
        \centering
        \includegraphics[width=\textwidth,height=4.14cm]{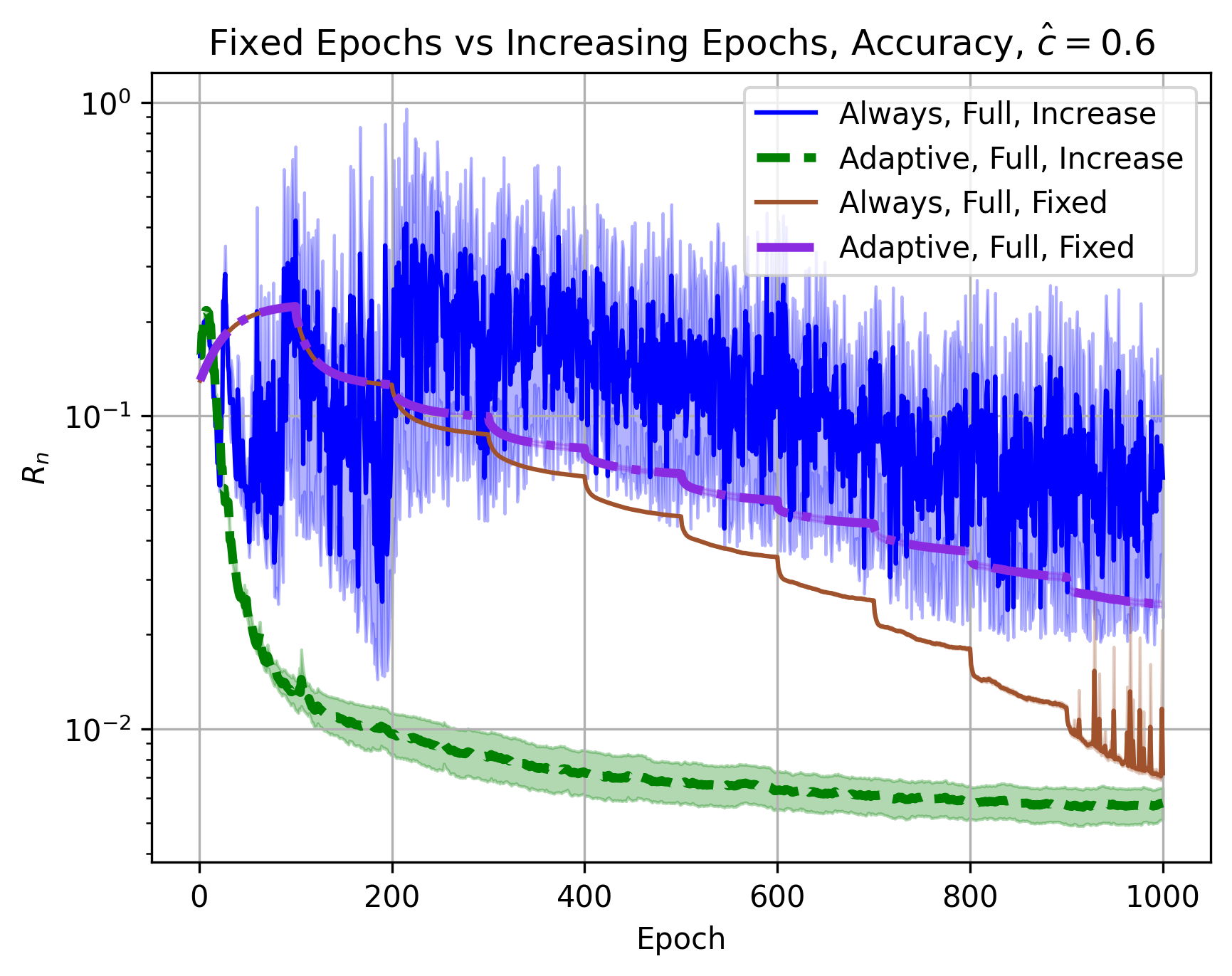}
        \caption{SGD, Adapt vs. Fixed, Accuracy, $\hat{c} = 0.6$.}
        \label{fig:sgd_comp_acc_0.6}
    \end{subfigure}
    \hfill
    \begin{subfigure}[b]{0.48\textwidth}
        \centering
        \includegraphics[width=\textwidth,height=4.14cm]{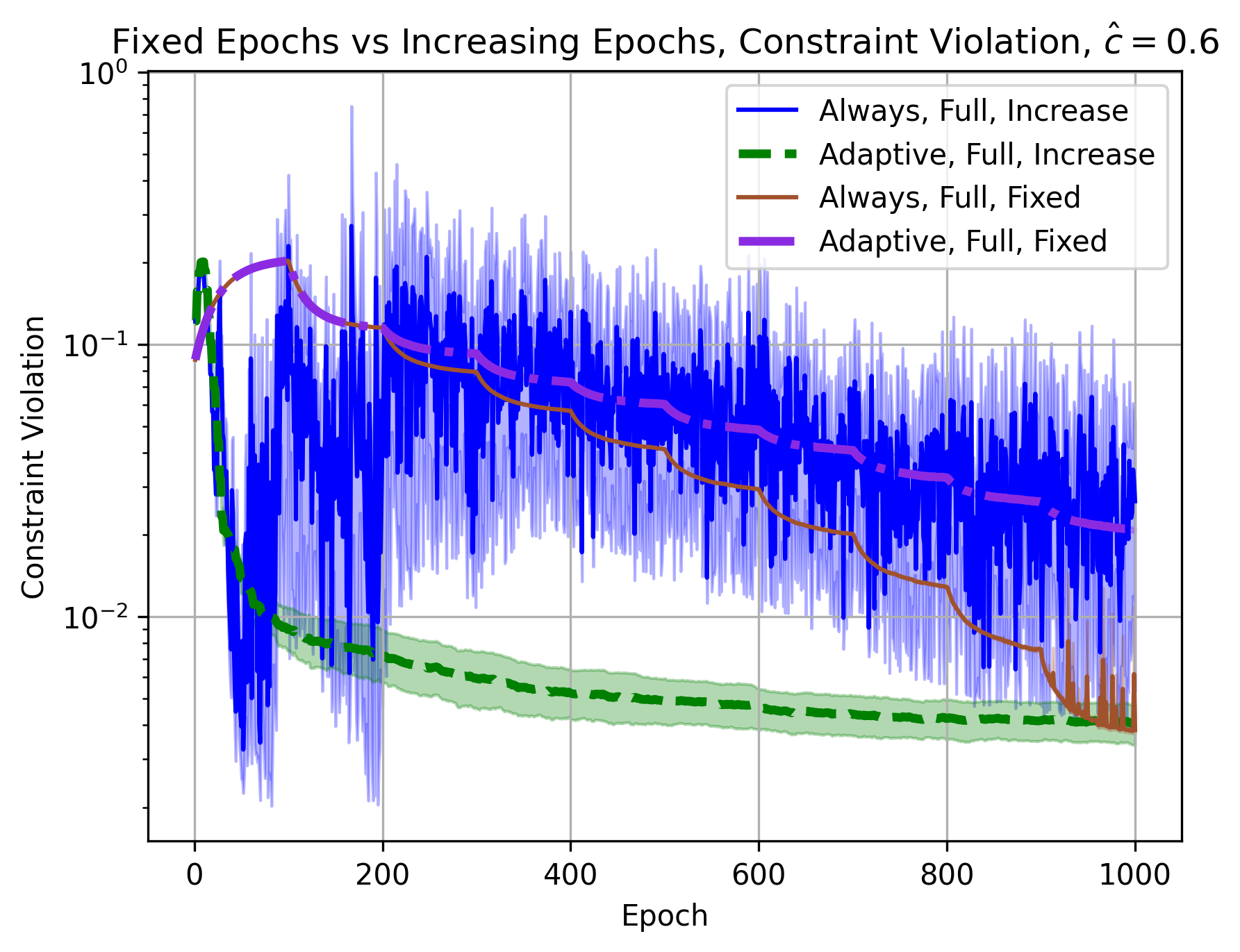}
        \caption{SGD, Adapt vs. Fixed, Constr. Violation, $\hat{c} = 0.6$.}
        \label{fig:sgd_comp_constr_0.6}
    \end{subfigure}

    \vspace{0.4cm} 

    \begin{subfigure}[b]{0.48\textwidth}
        \centering
        \includegraphics[width=\textwidth,height=4.14cm]{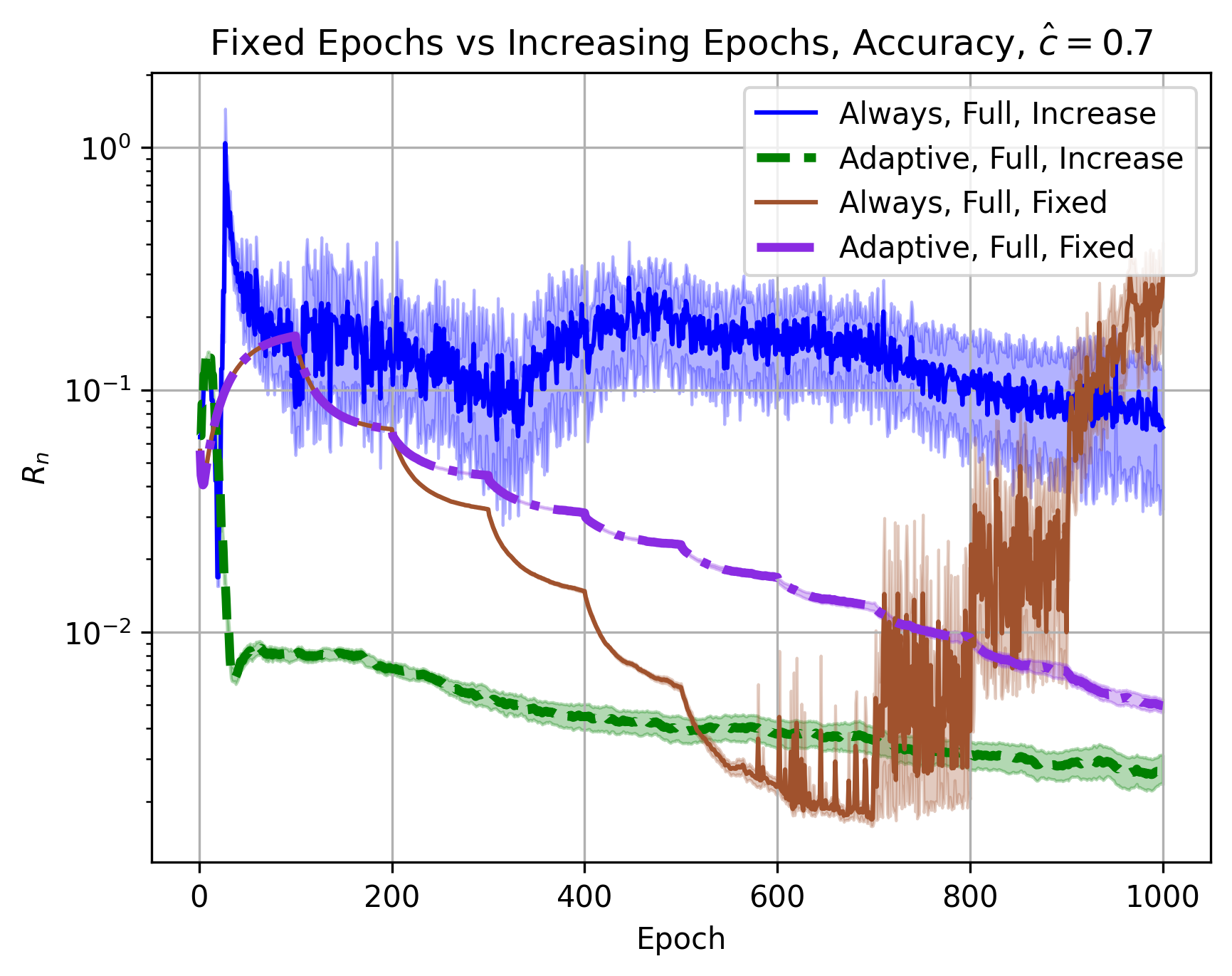}
        \caption{SGD, Adapt vs. Fixed, Accuracy, $\hat{c} = 0.7$.}
        \label{fig:sgd_comp_acc_0.7}
    \end{subfigure}
    \hfill
    \begin{subfigure}[b]{0.48\textwidth}
        \centering
        \includegraphics[width=\textwidth,height=4.14cm]{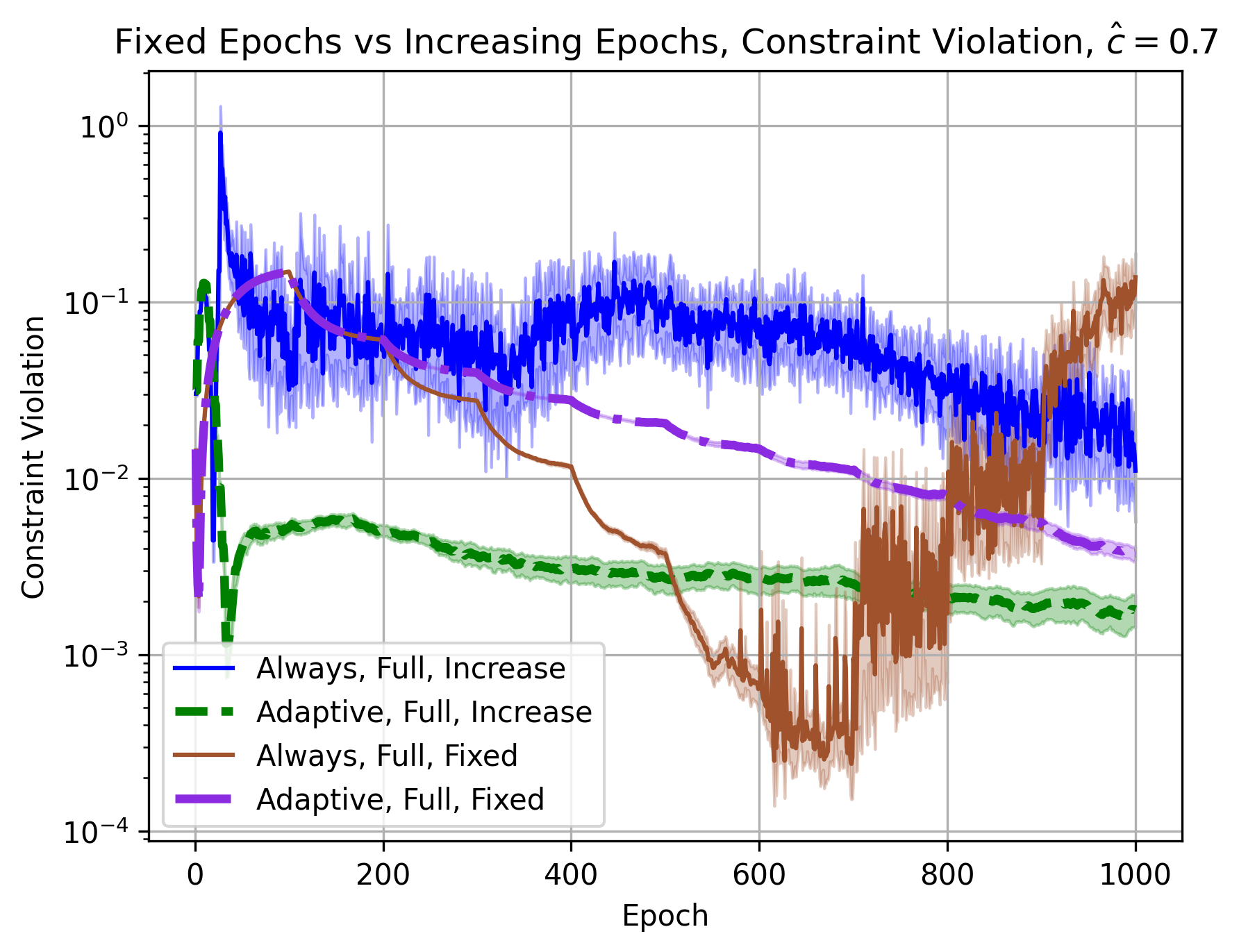}
        \caption{SGD, Adapt vs. Fixed, Constr. Violation, $\hat{c} = 0.7$.}
        \label{fig:sgd_comp_constr_0.7}
    \end{subfigure}

    \hfill

      \begin{subfigure}[b]{0.48\textwidth}
        \centering
        \includegraphics[width=\textwidth,height=4.14cm]{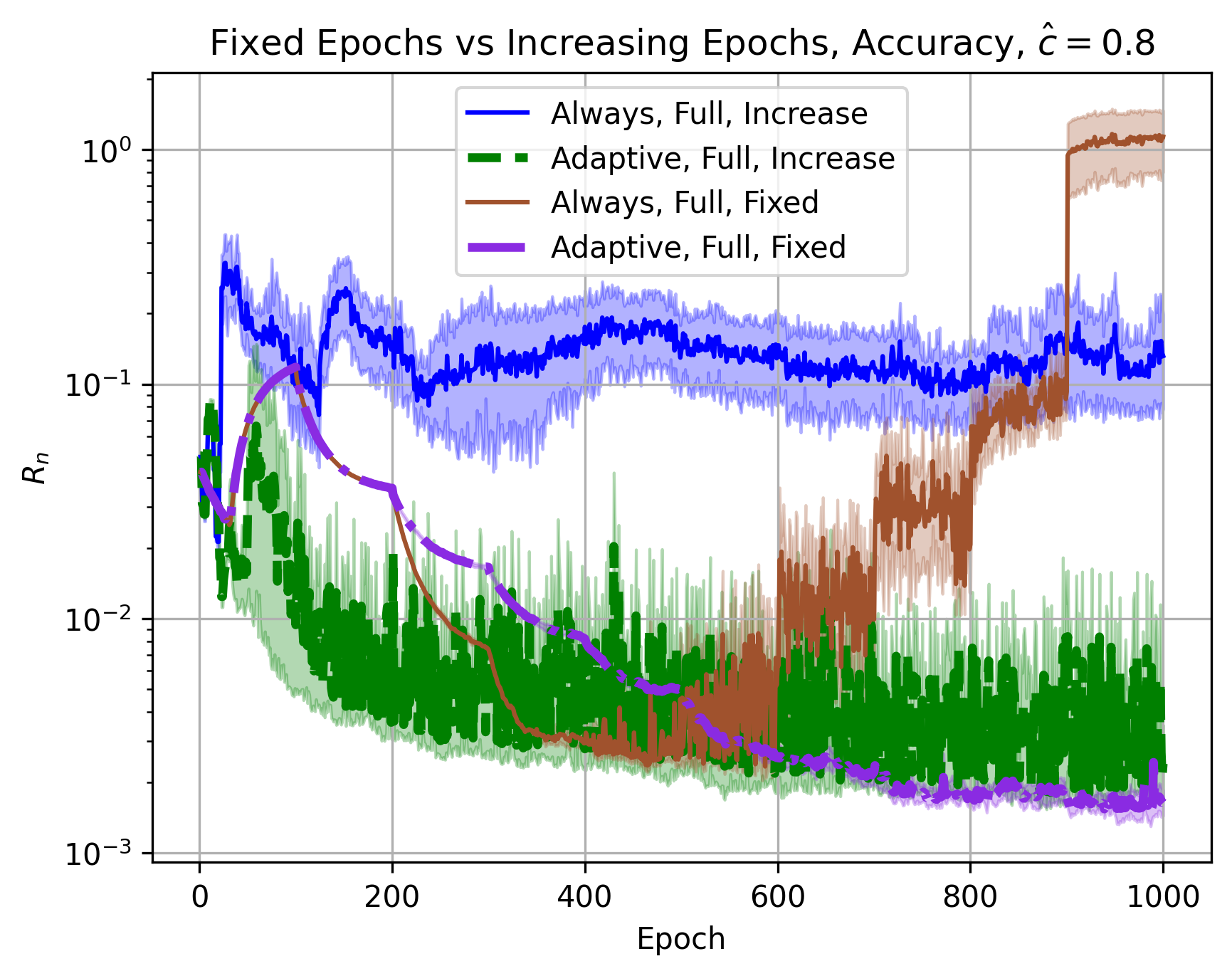}
        \caption{SGD, Adapt vs. Fixed, Accuracy, $\hat{c} = 0.8$.}
        \label{fig:sgd_comp_acc_0.8}
    \end{subfigure}
    \hfill
    \begin{subfigure}[b]{0.48\textwidth}
        \centering
        \includegraphics[width=\textwidth,height=4.14cm]{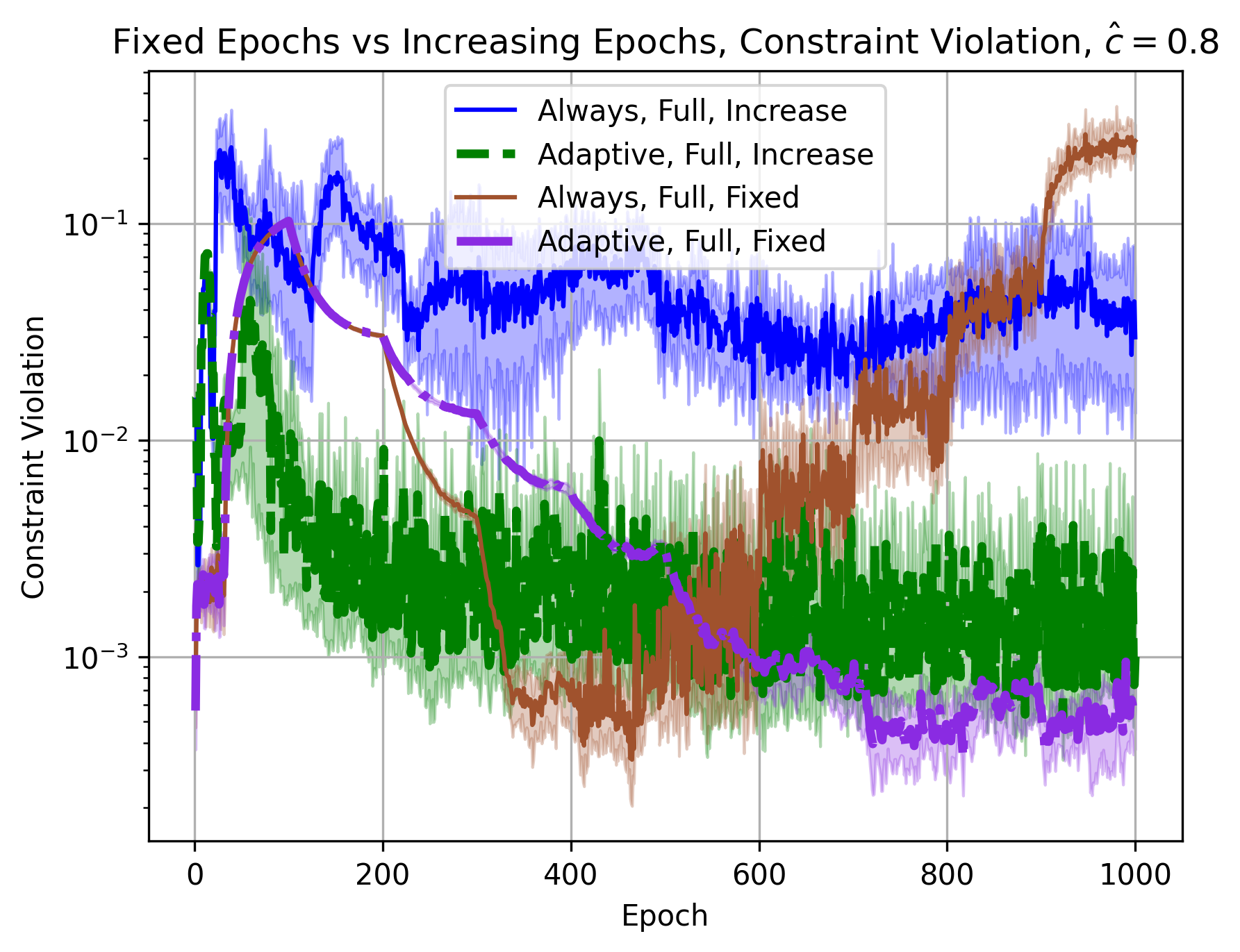}
        \caption{SGD, Adapt vs. Fixed, Constr. Violation, $\hat{c} = 0.8$.}
        \label{fig:sgd_comp_constr_0.8}
    \end{subfigure}

    \vspace{0.4cm} 

    \begin{subfigure}[b]{0.48\textwidth}
        \centering
        \includegraphics[width=\textwidth,height=4.14cm]{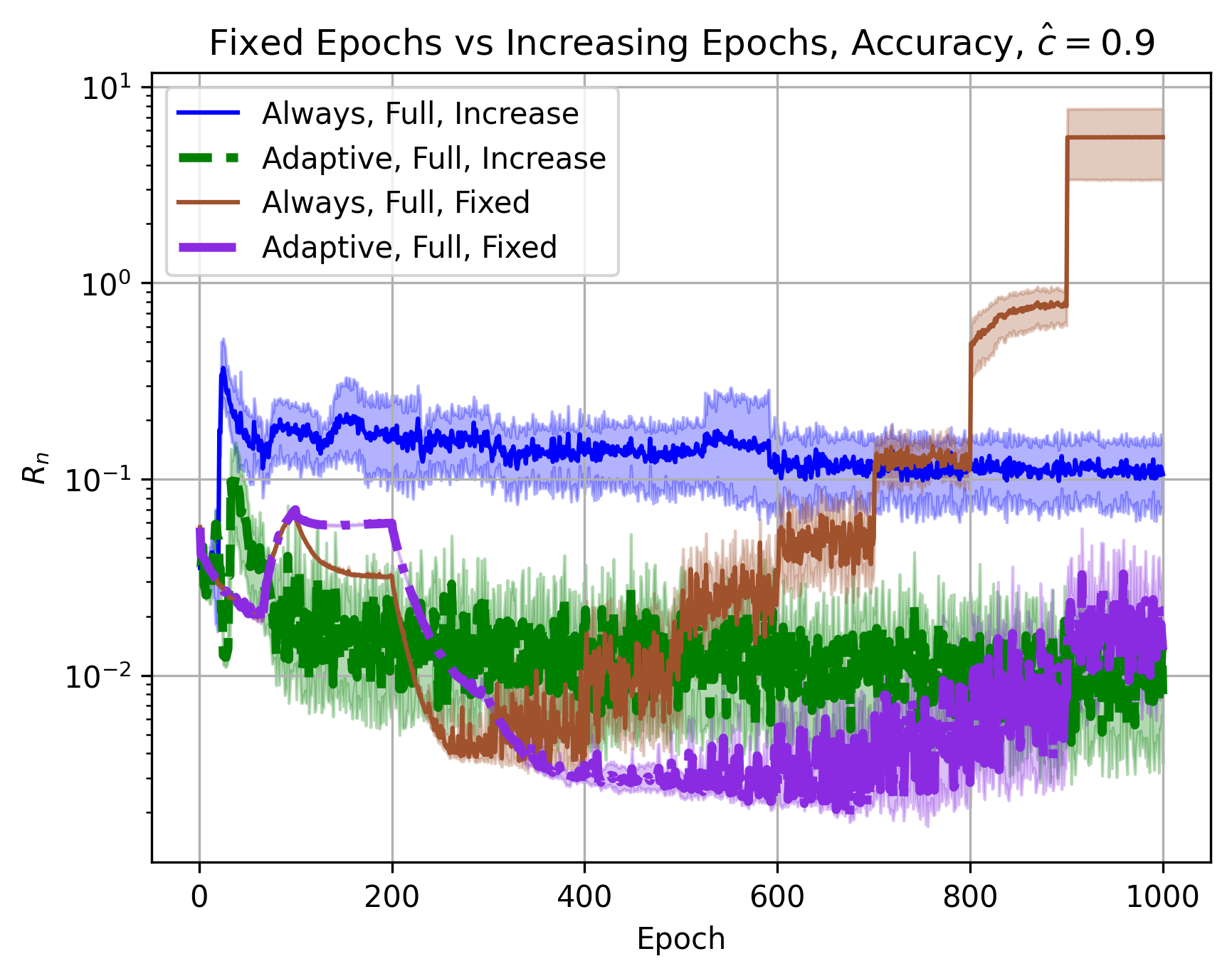}
        \caption{SGD, Adapt vs. Fixed, Accuracy, $\hat{c} = 0.9$.}
        \label{fig:sgd_comp_acc_0.9}
    \end{subfigure}
    \hfill
    \begin{subfigure}[b]{0.48\textwidth}
        \centering
        \includegraphics[width=\textwidth,height=4.14cm]{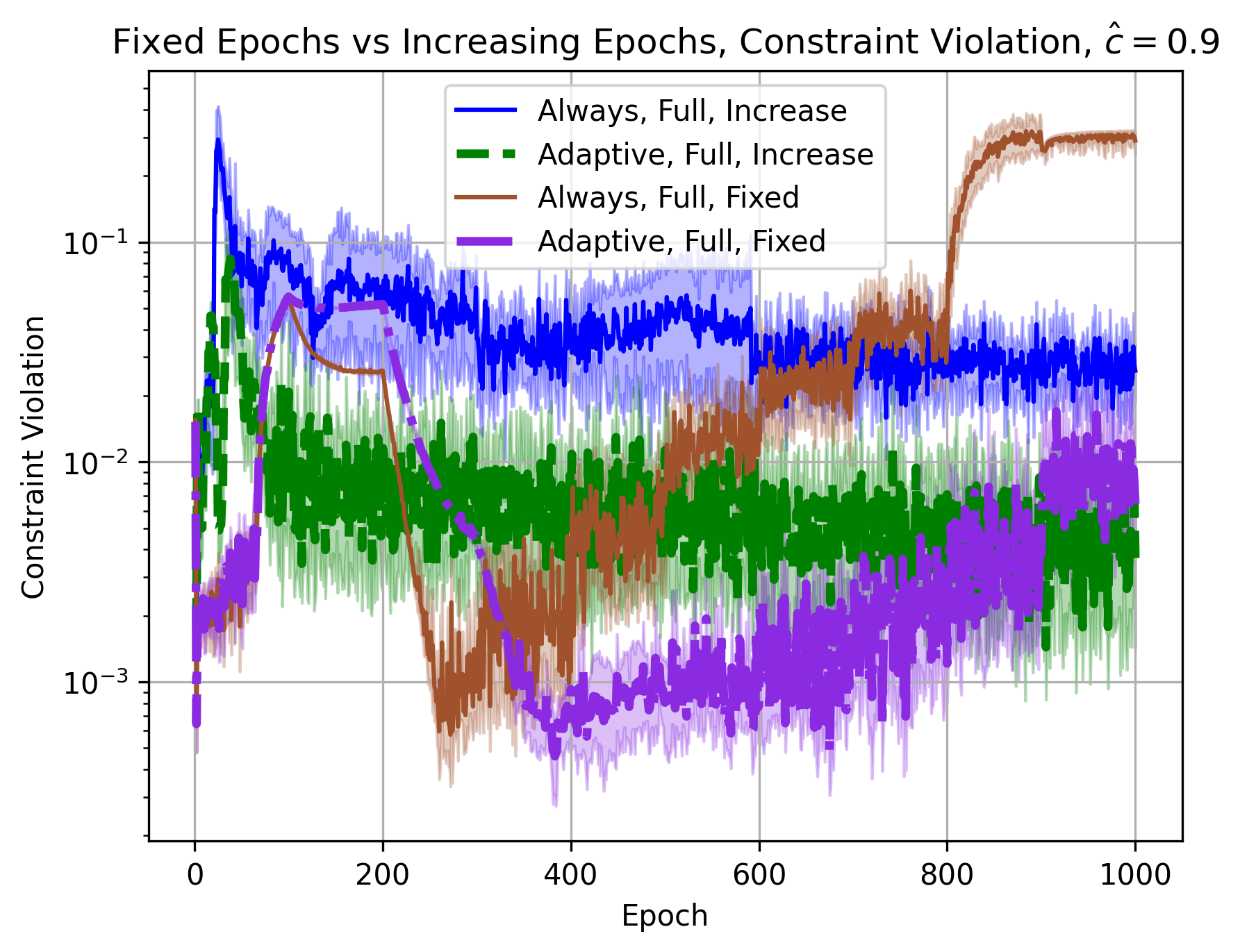}
        \caption{SGD, Adapt vs. Fixed, Constr. Violation, $\hat{c} = 0.9$.}
        \label{fig:modes_adaptive_D}
    \end{subfigure}
    
    \caption{Comparison of adaptive versus fixed inner epochs for SGD varying $\hat{c}$. Average results over 20 trials with 95\% confidence intervals.}
    \label{fig:sgd_comp_constr_0.9}
\end{figure}

The results in Figure~\ref{fig:modes_comparison_all} compare the four outer-loop variants when all methods are implemented using Algorithm \ref{alg:innersolve} and projected stochastic gradient is used to solve the subproblem. At each stochastic gradient iteration, positive and negative class mini-batches are sampled randomly with replacement with mini-batch sizes $128$, while the stepsize is set to $\frac{1}{\sqrt{T_{k,p}}}$. At the start of the algorithm, we choose $T_{0,0} = 1$ and set $r = 2$. In addition, we set $\beta_0 = 0.1$ and $\gamma = 1.5$, values found after some mild parameter tuning.

Across the values of $\hat{c}$, Adaptive-Full method generally produces the smallest combined residual and the smallest constraint violation. Meanwhile, we notice the Always-Full method initially makes progress but exhibits larger oscillations and stabilizes at a higher residual level. This difference is especially visible for the looser constraint thresholds. The comparison indicates that the full dual variable step size is more effective when it is paired with adaptive penalty growth. A full dual variable step size alone does not prevent poor numerical behavior when the penalty parameter is increased unnecessarily.\\

Figure~\ref{fig:sgd_comp_constr_0.9} compares methods implementing Algorithm \ref{alg:innersolve} with a fixed epoch strategy for the Always-Full and Adaptive-Full variants. The parameters for the algorithms using Algorithm \ref{alg:innersolve} (labeled ``increase" in the plots) are the same as the previous experiment, while the fixed epoch strategies solve each subproblem using 100 epochs with a stepsize of 0.1 and an initial penalty parameter of 1. Across the values of $\hat{c}$, Adaptive-Full method using Algorithm \ref{alg:innersolve} exhibits the most consistent convergence behavior. Overall, the method rapidly decreases to a low residual value and maintains relatively stable values of both the optimality residual and the constraint violation. The advantage is particularly clear for $\hat{c} \in \{0.6,0.7\}$, where the method reaches lower residuals than its fixed epoch counterpart and displays less variability throughout the run. In general, the increasing epoch strategy provides more stable behavior across all four constraint thresholds and is less sensitive to the choice of a single fixed epoch count. The benefit of increasing inner epoch is especially clear when compared with Always-Full under a fixed epoch, which exhibits worse convergence trajectory for several values of $\hat{c}$, which, while occasionally achieving a lower residual or constraint violation, has consistently erratic behavior across the experiments. These results suggest that progressively increasing the amount of inner-solver computation improves the reliability of the approximate subproblem solutions. The strategy begins with a relatively small computational epoch and increase inner iterations only when the current point fails the acceptance conditions in Algorithm \ref{alg:innersolve}. It therefore avoids unnecessarily expensive inner solves during the early iterations while providing greater accuracy when later augmented Lagrangian subproblems become more difficult.

Additional stochastic experiments can be found in Appendix \ref{app:pstormexperiments} where PStorm is employed as the inner subproblem solver. The experimental results are qualitatively similar to those for projected stochastic gradient, so we relegate these to the appendix.

\section{Conclusion}

In this work, we proposed a simple, adaptive Augmented Lagrangian method for nonlinear, non-convex optimization problems. We prove complexity results that match, up to logarithmic factors, state-of-the-art bounds for Augmented Lagrangian methods under mild conditions. Empirically, we demonstrate that the use of the classical dual stepsize (with a projection step, to enable complexity analysis) as well as an adaptive penalty parameter scheme far outperforms the approaches used by most Augmented Lagrangian methods with complexity guarantees. In addition, we propose a simple, adaptive inner loop for stochastic Augmented Lagrangian methods and derive high probability complexity results when this inner subproblem solver is employed. Numerically, this adaptive inner solver helps stabilize the convergence behavior of the algorithm and proves consistently effective across the set of experiments.

These results apply exclusively to ``two-loop" Augmented Lagrangian methods, though there is significant interest in ``single-loop" methods, which take simple steps, as opposed to solving a subproblem, at each Augmented Lagrangian iteration. An interesting direction of future research is to explore whether the adaptive strategies proposed here can be extended to single loop Augmented Lagrangian methods.

\clearpage

\bibliographystyle{plain} 
\bibliography{references} 

\appendix

\section{PStorm Experiments} \label{app:pstormexperiments}

In this section, we repeat the stochastic experiments of Section \ref{subsec:stochasticexperiments} using the PSTORM \cite{Li2023} subproblem solver, which incorporates recursive variance reduction. The results of these experiments can be found in Figures \ref{fig:modes_comparison_all_pstorm} and \ref{fig:pstorm_comp}. As noted in Section \ref{subsec:stochasticexperiments}, the results are qualitatively similar to those performed using projected stochastic gradient. In particular, the Adaptive, Full method consistently outperforms the other methods tested, across the different problems tested. In addition, the adaptive inner-solver is significantly more stable than the other methods tested and performs as well or better than the non-adaptive methods in most trials.

\begin{figure}[h]
    \centering

    \begin{subfigure}[b]{0.48\textwidth}
        \centering
        \includegraphics[width=\textwidth,height=4.14cm]{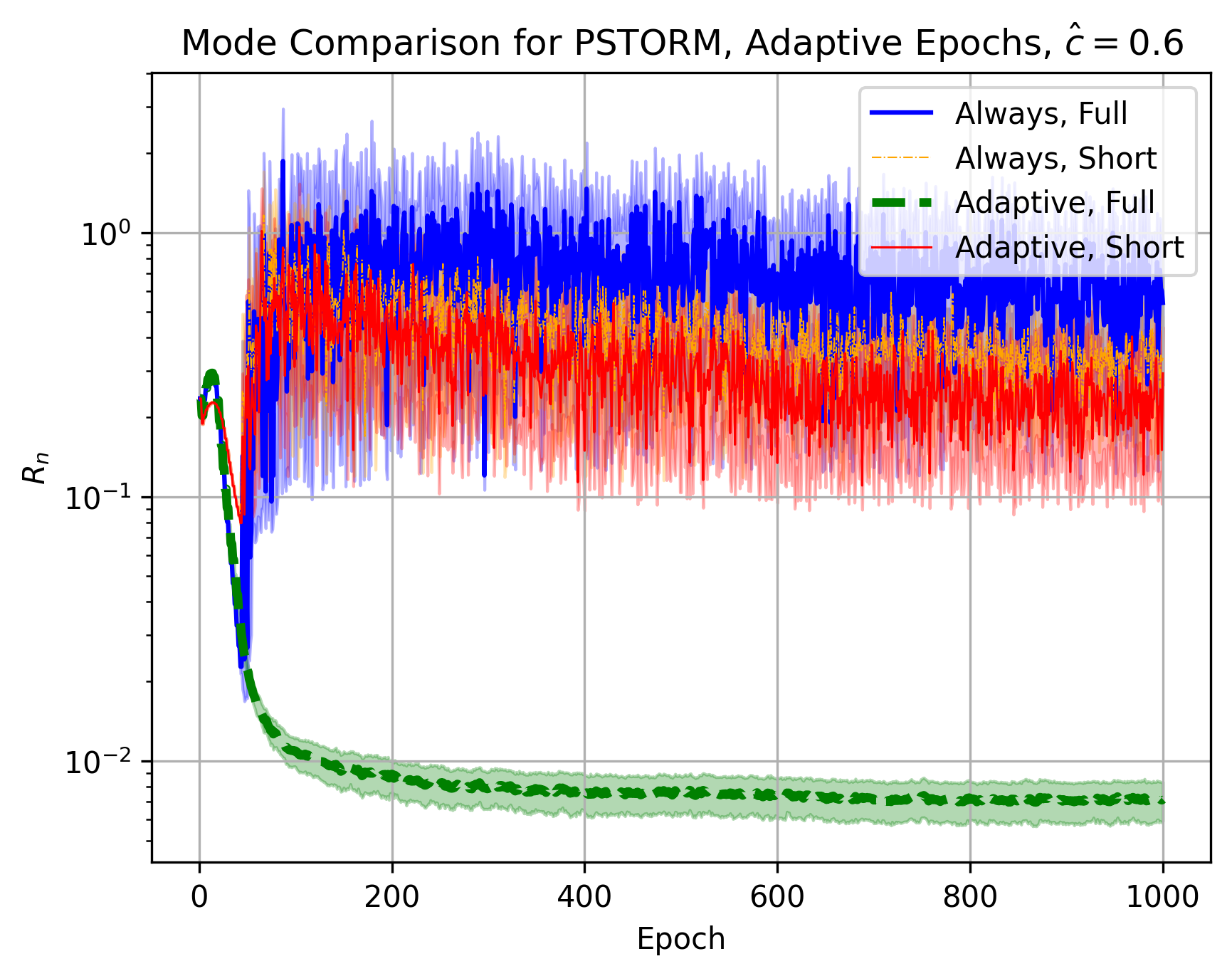}
        \caption{PStorm, Adaptive epochs, Accuracy, $\hat{c} = 0.6$.}
        \label{fig:pstorm_adap_acc_0.6}
    \end{subfigure}
    \hfill
    \begin{subfigure}[b]{0.48\textwidth}
        \centering
        \includegraphics[width=\textwidth,height=4.14cm]{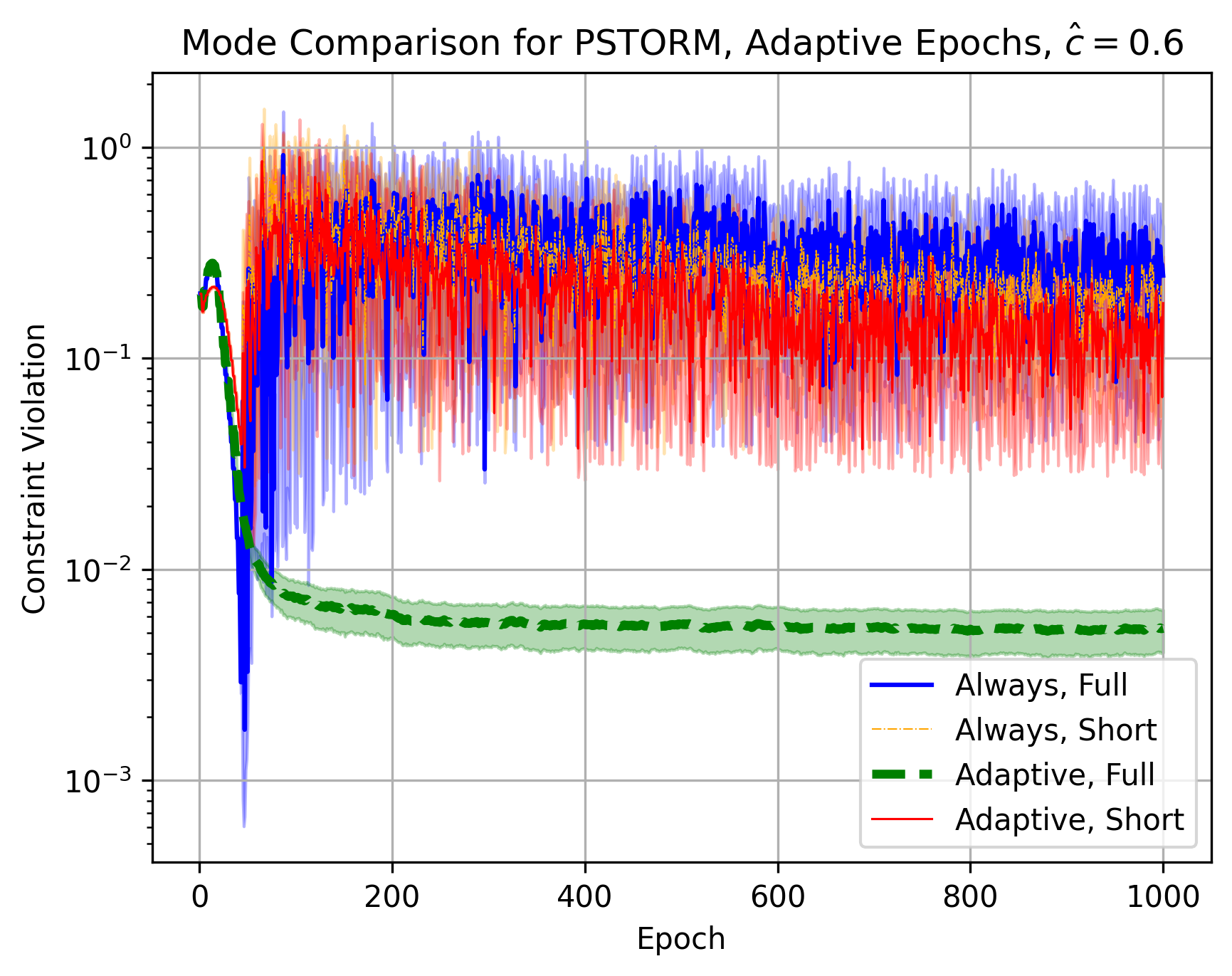}
        \caption{PStorm, Adaptive epochs, Constr. violation, $\hat{c} = 0.6$.}
        \label{fig:pstorm_adap_constr_0.6}
    \end{subfigure}

    \vspace{0.4cm} 

    \begin{subfigure}[b]{0.48\textwidth}
        \centering
        \includegraphics[width=\textwidth,height=4.14cm]{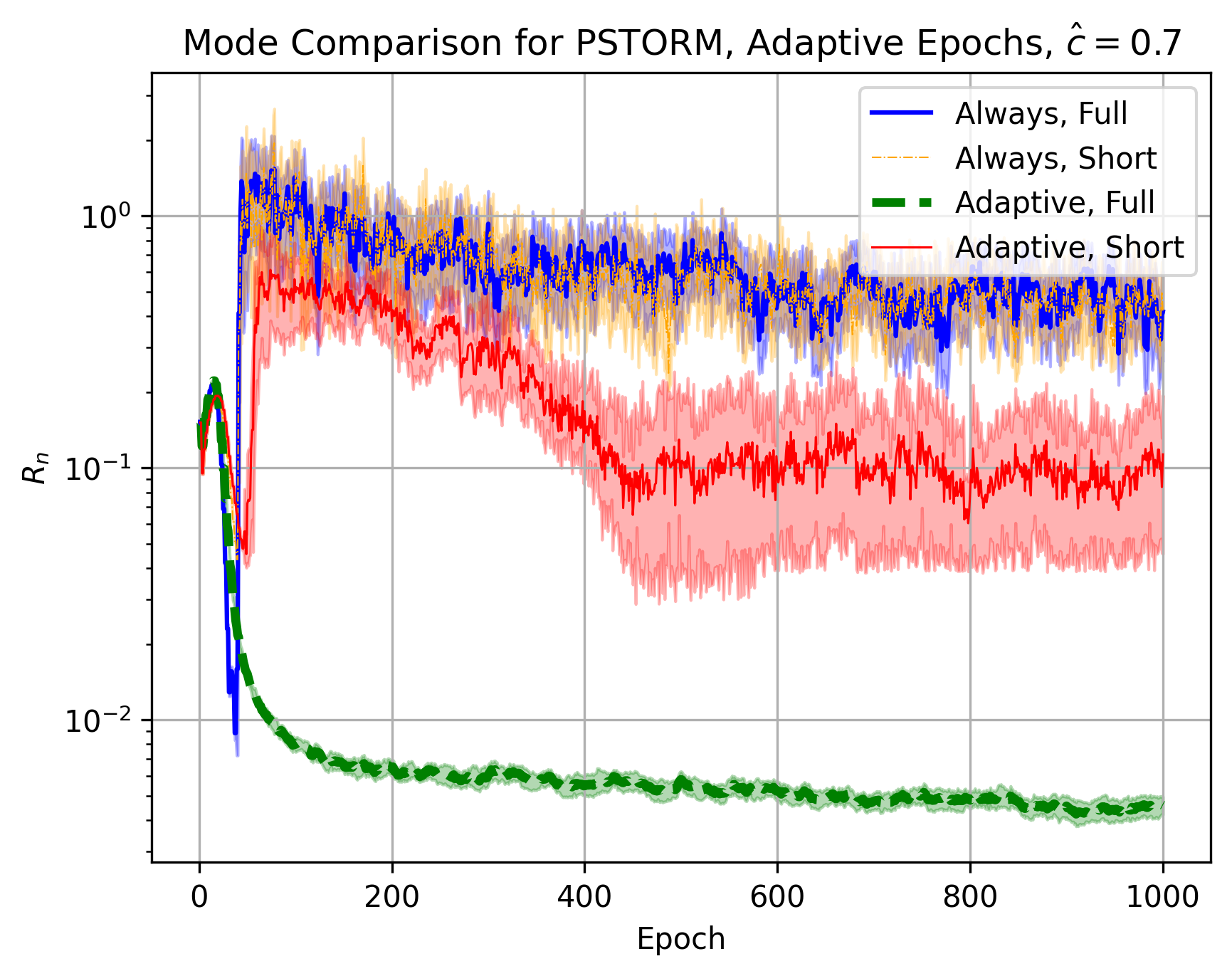}
        \caption{PStorm, Adaptive epochs, Accuracy, $\hat{c} = 0.7$.}
        \label{fig:pstorm_adap_acc_0.7}
    \end{subfigure}
    \hfill
    \begin{subfigure}[b]{0.48\textwidth}
        \centering
        \includegraphics[width=\textwidth,height=4.14cm]{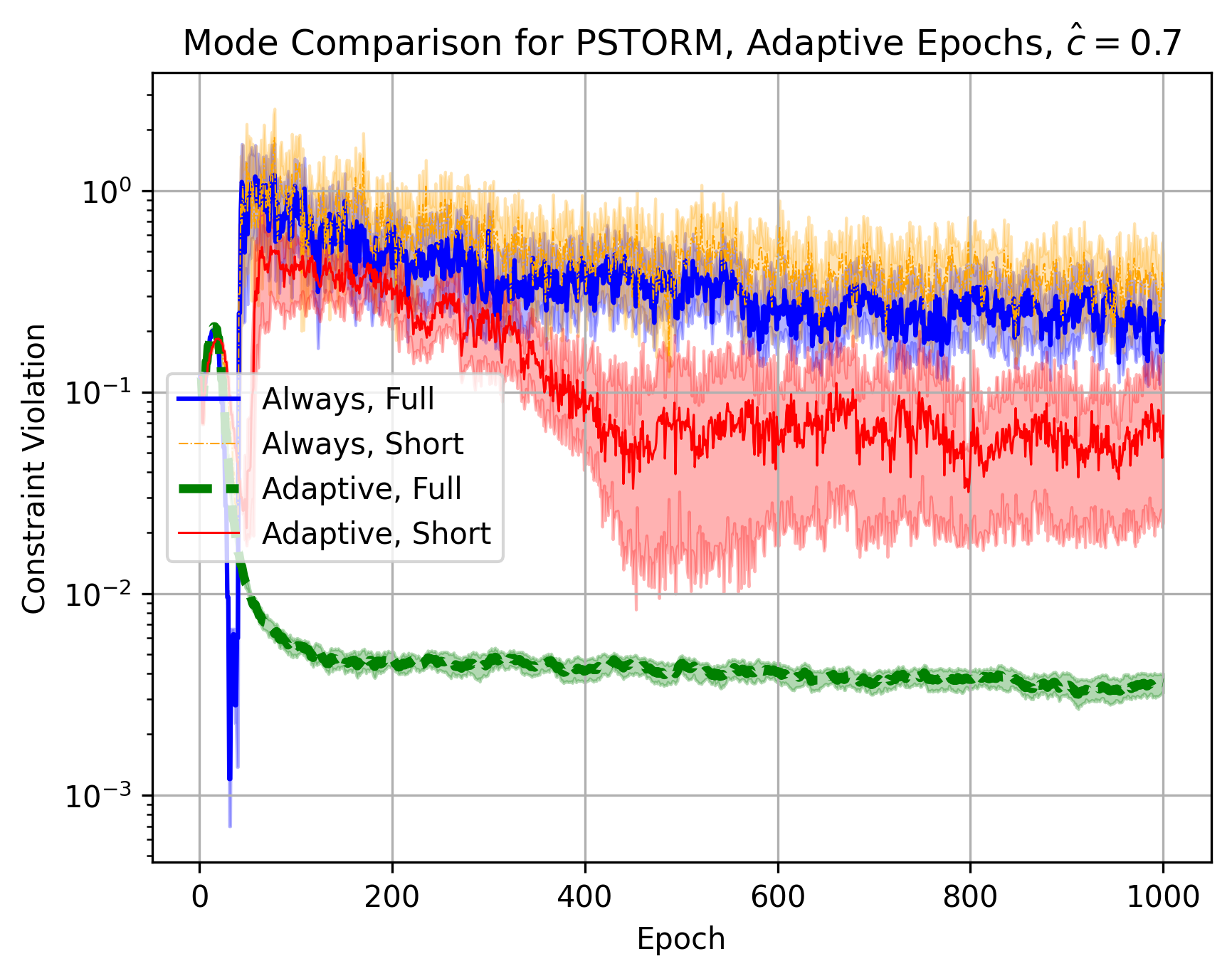}
        \caption{PStorm, Adaptive epochs, Constr. violation, $\hat{c} = 0.7$.}
        \label{fig:pstorm_adap_constr_0.7}
    \end{subfigure}

    \hfill

      \begin{subfigure}[b]{0.48\textwidth}
        \centering
        \includegraphics[width=\textwidth,height=4.14cm]{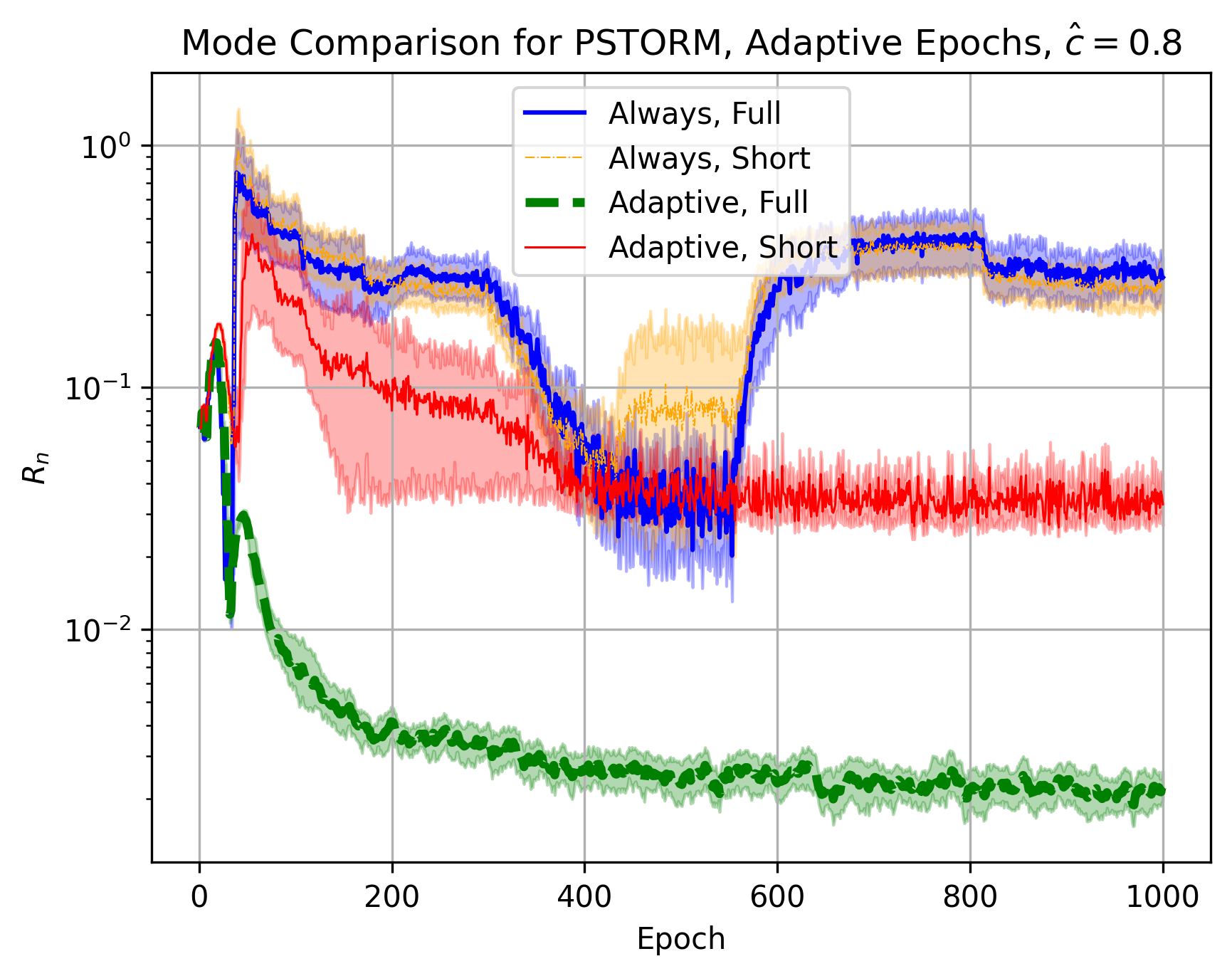}
        \caption{PStorm, Adaptive epochs, Accuracy, $\hat{c} = 0.8$.}
        \label{fig:pstorm_adap_acc_0.8}
    \end{subfigure}
    \hfill
    \begin{subfigure}[b]{0.48\textwidth}
        \centering
        \includegraphics[width=\textwidth,height=4.14cm]{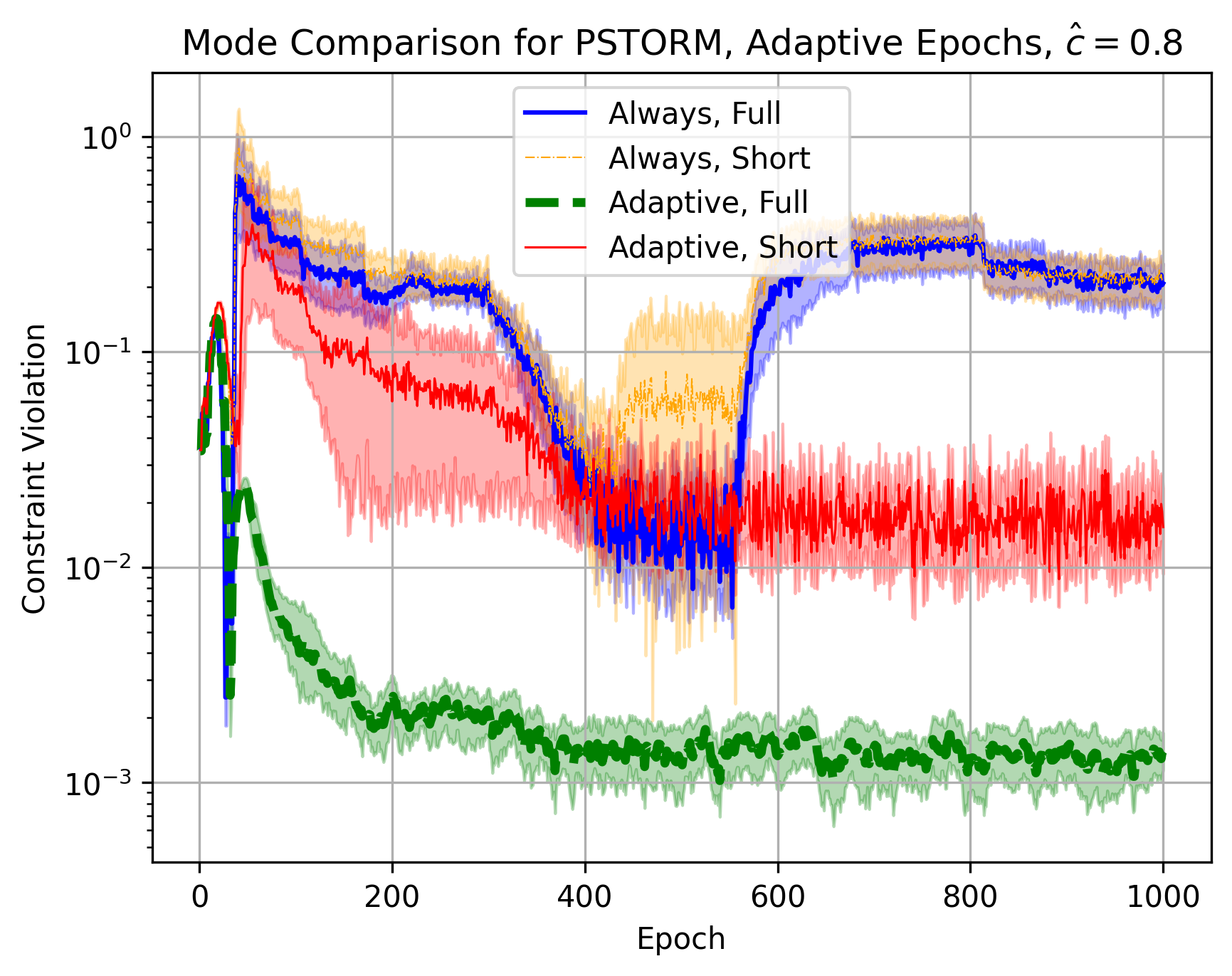}
        \caption{PStorm, Adaptive epochs, Constr. violation, $\hat{c} = 0.8$.}
        \label{fig:pstorm_adap_constr_0.8}
    \end{subfigure}

    \vspace{0.4cm} 

    \begin{subfigure}[b]{0.48\textwidth}
        \centering
        \includegraphics[width=\textwidth,height=4.14cm]{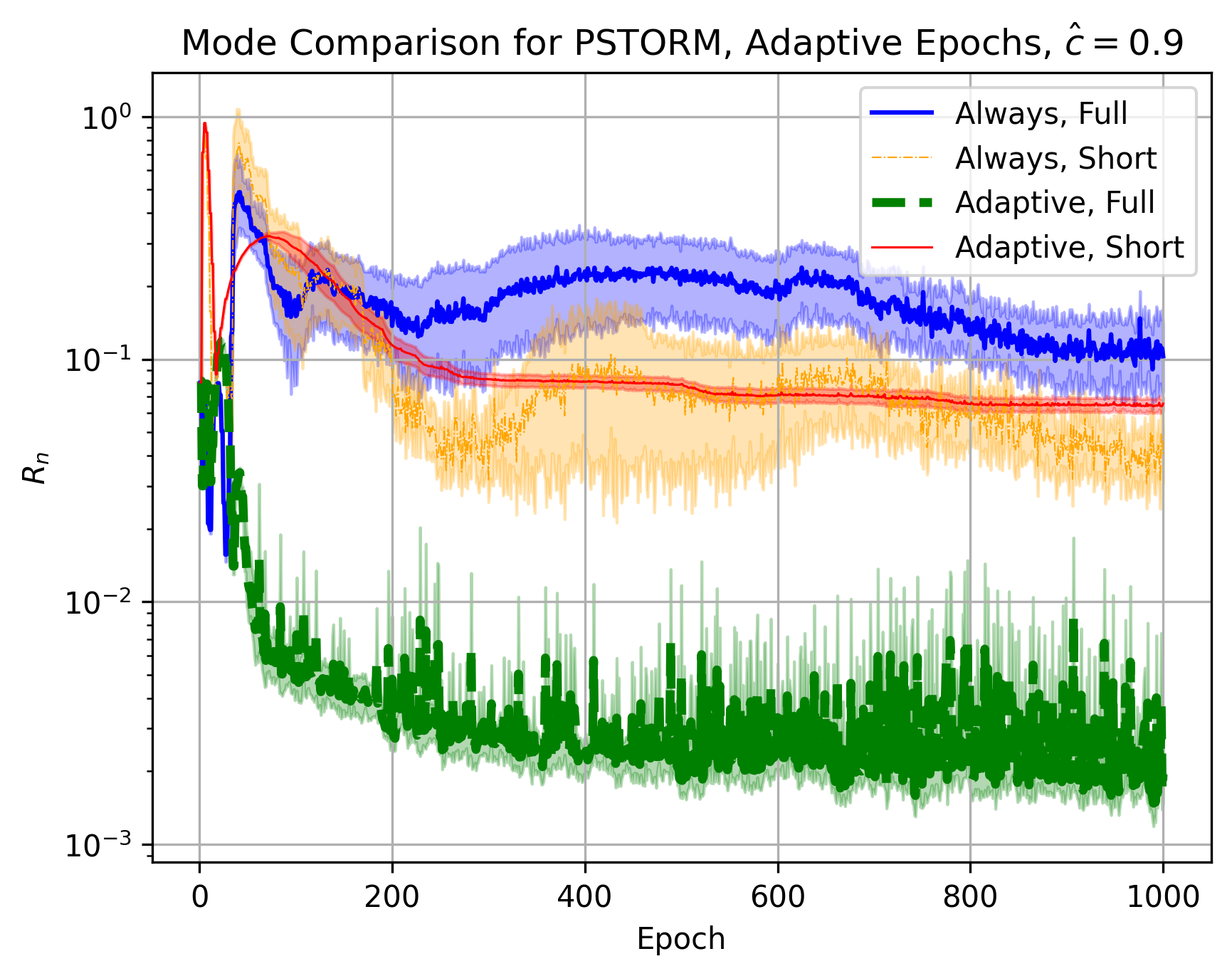}
        \caption{PStorm, Adaptive epochs, Accuracy, $\hat{c} = 0.9$.}
        \label{fig:pstorm_adap_acc_0.9}
    \end{subfigure}
    \hfill
    \begin{subfigure}[b]{0.48\textwidth}
        \centering
        \includegraphics[width=\textwidth,height=4.14cm]{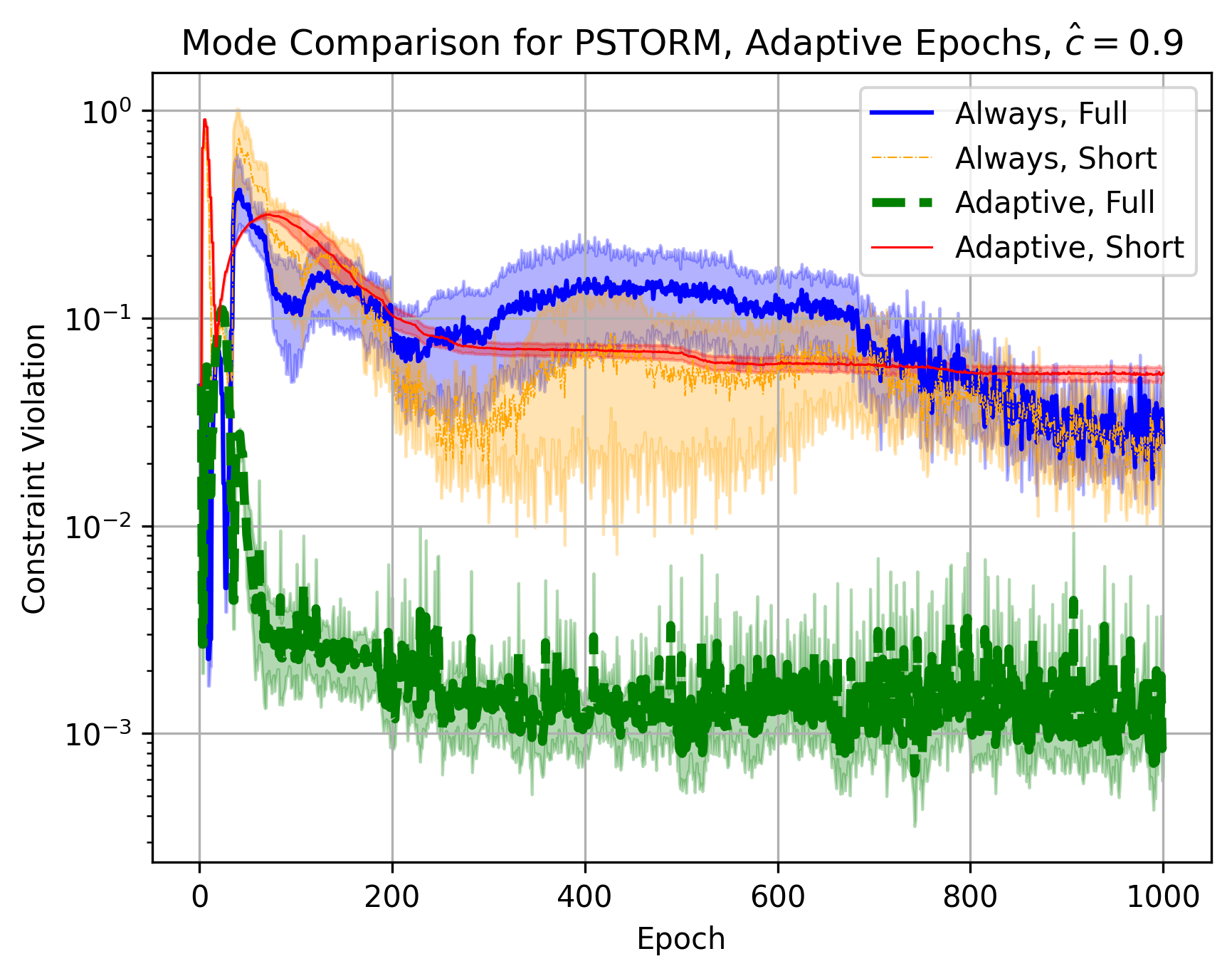}
        \caption{PStorm, Adaptive epochs, Constr. violation, $\hat{c} = 0.9$.}
        \label{fig:pstorm_adap_constr_0.9}
    \end{subfigure}
    
    \caption{Comparison of different outer loop modes for PStorm with adaptive inner epochs varying $\hat{c}$. Average results over 20 trials with 95\% confidence intervals.}
    \label{fig:modes_comparison_all_pstorm}
\end{figure}

\begin{figure}[h]
    \centering

    \begin{subfigure}[b]{0.48\textwidth}
        \centering
        \includegraphics[width=\textwidth,height=4.14cm]{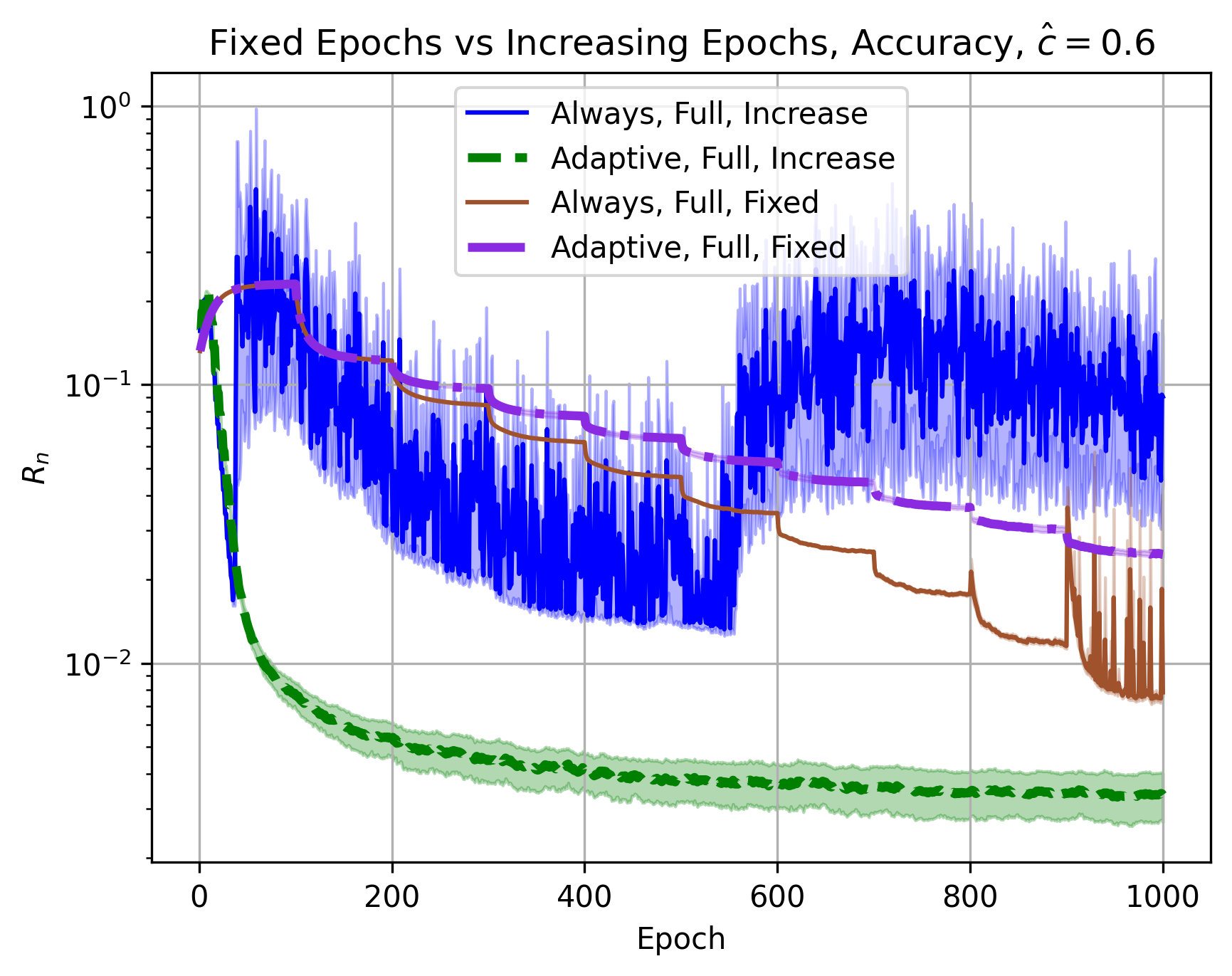}
        \caption{PStorm, Adapt vs. Fixed, Accuracy, $\hat{c} = 0.6$.}
        \label{fig:pstorm_comp_acc_0.6}
    \end{subfigure}
    \hfill
    \begin{subfigure}[b]{0.48\textwidth}
        \centering
        \includegraphics[width=\textwidth,height=4.14cm]{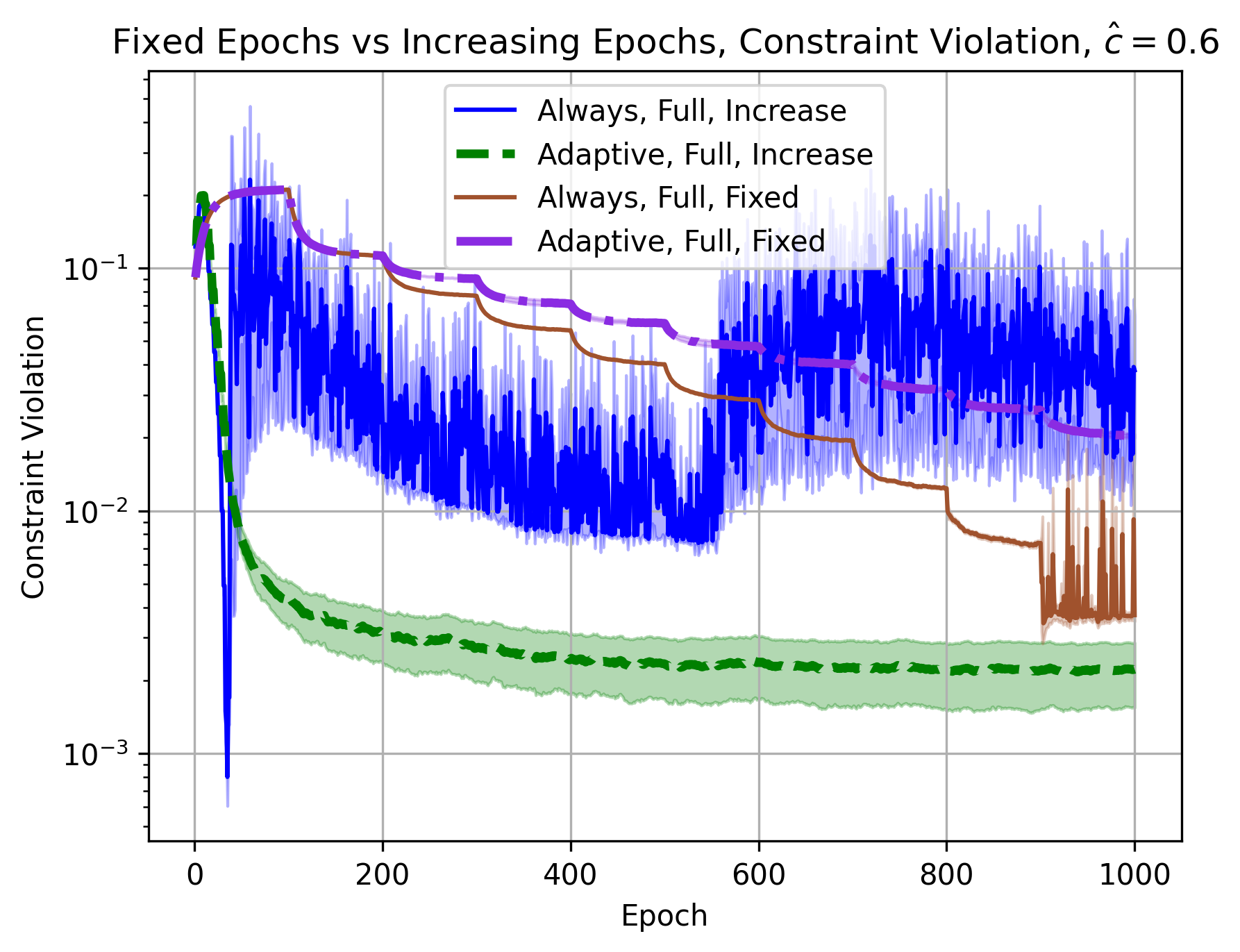}
        \caption{PStorm, Adapt vs. Fixed, Constr. Violation, $\hat{c} = 0.6$.}
        \label{fig:pstorm_comp_constr_0.6}
    \end{subfigure}

    \vspace{0.4cm} 

    \begin{subfigure}[b]{0.48\textwidth}
        \centering
        \includegraphics[width=\textwidth,height=4.14cm]{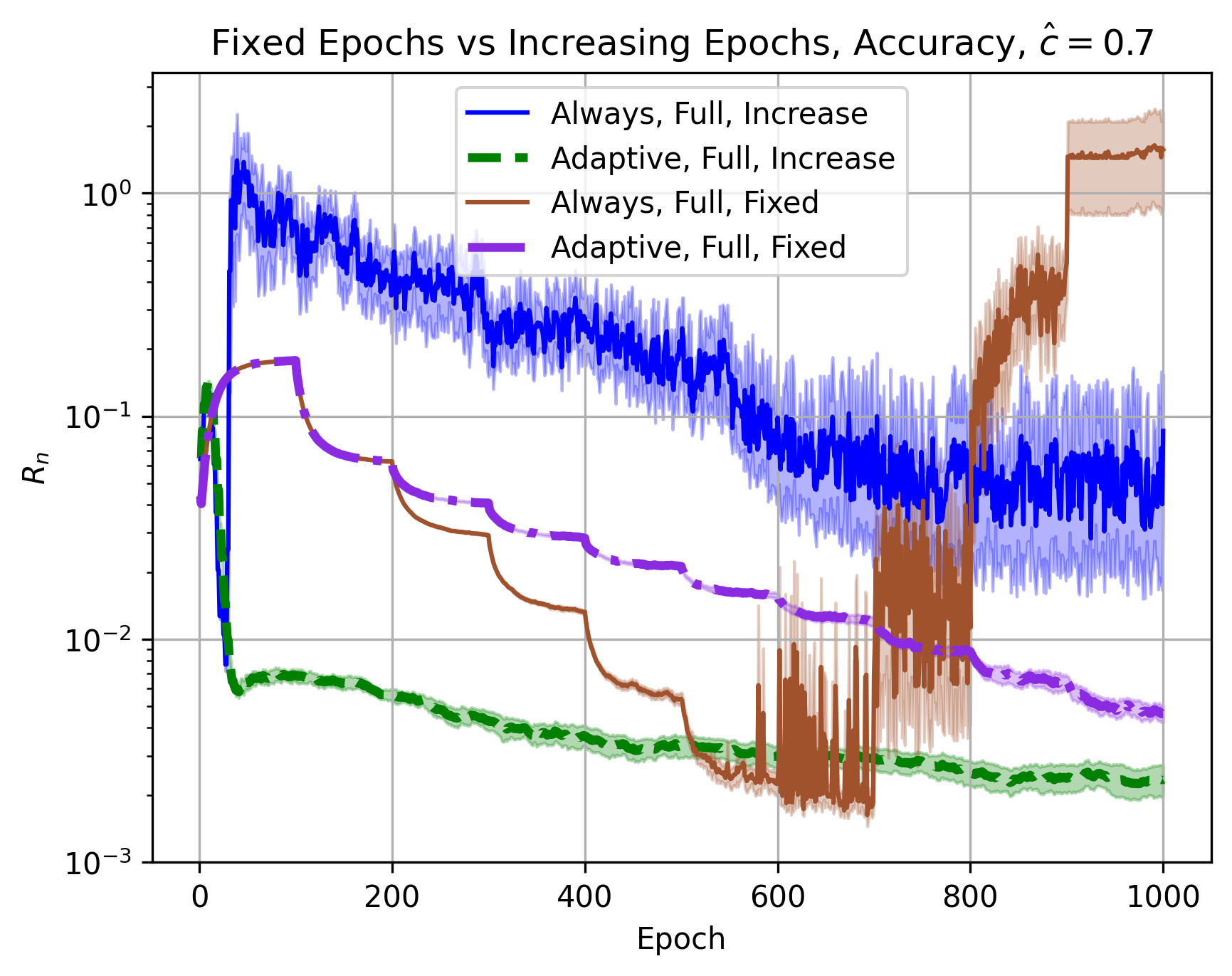}
        \caption{PStorm, Adapt vs. Fixed, Accuracy, $\hat{c} = 0.7$.}
        \label{fig:pstorm_comp_acc_0.7}
    \end{subfigure}
    \hfill
    \begin{subfigure}[b]{0.48\textwidth}
        \centering
        \includegraphics[width=\textwidth,height=4.14cm]{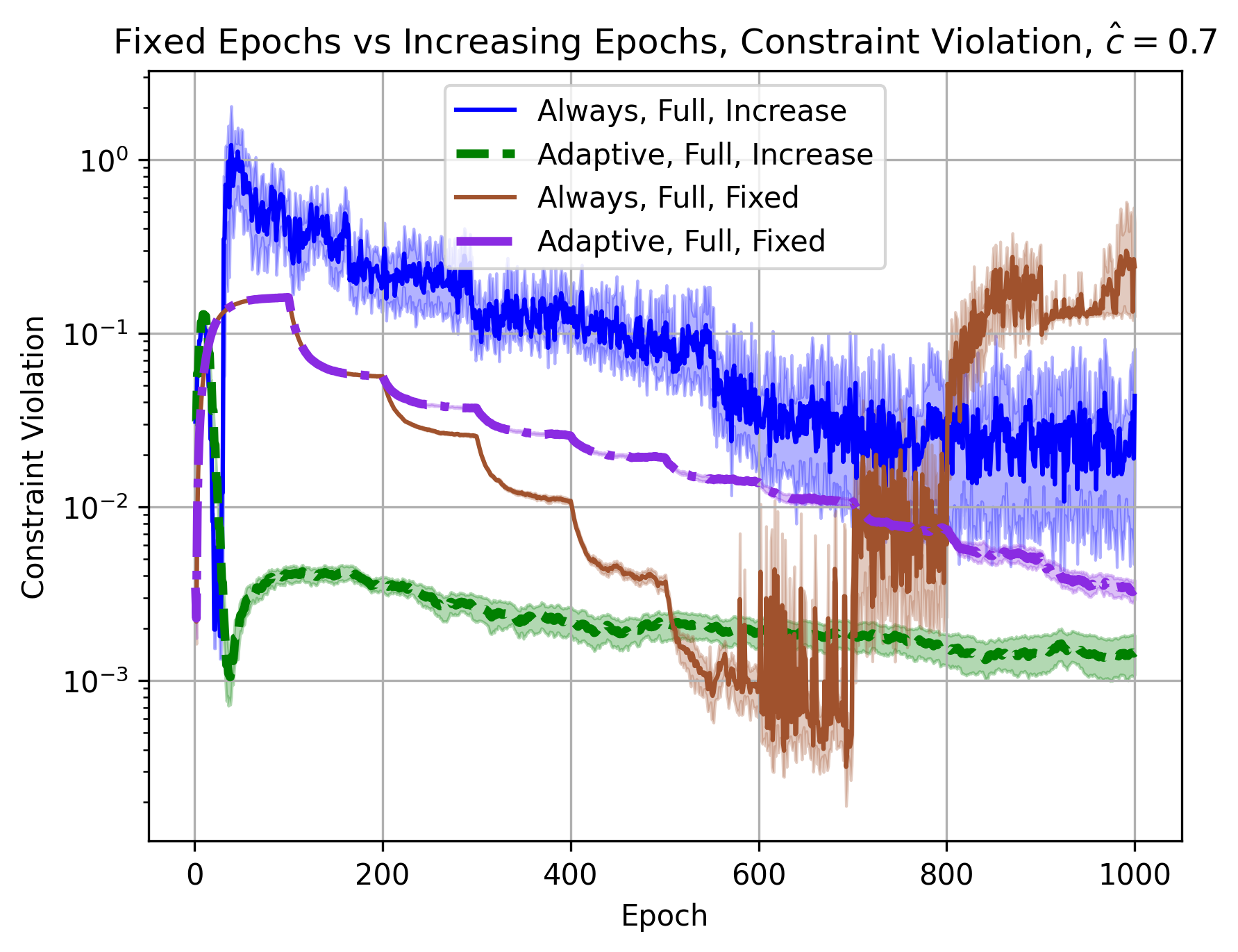}
        \caption{PStorm, Adapt vs. Fixed, Constr. Violation, $\hat{c} = 0.7$.}
        \label{fig:pstorm_comp_constr_0.7}
    \end{subfigure}

    \hfill

      \begin{subfigure}[b]{0.48\textwidth}
        \centering
        \includegraphics[width=\textwidth,height=4.14cm]{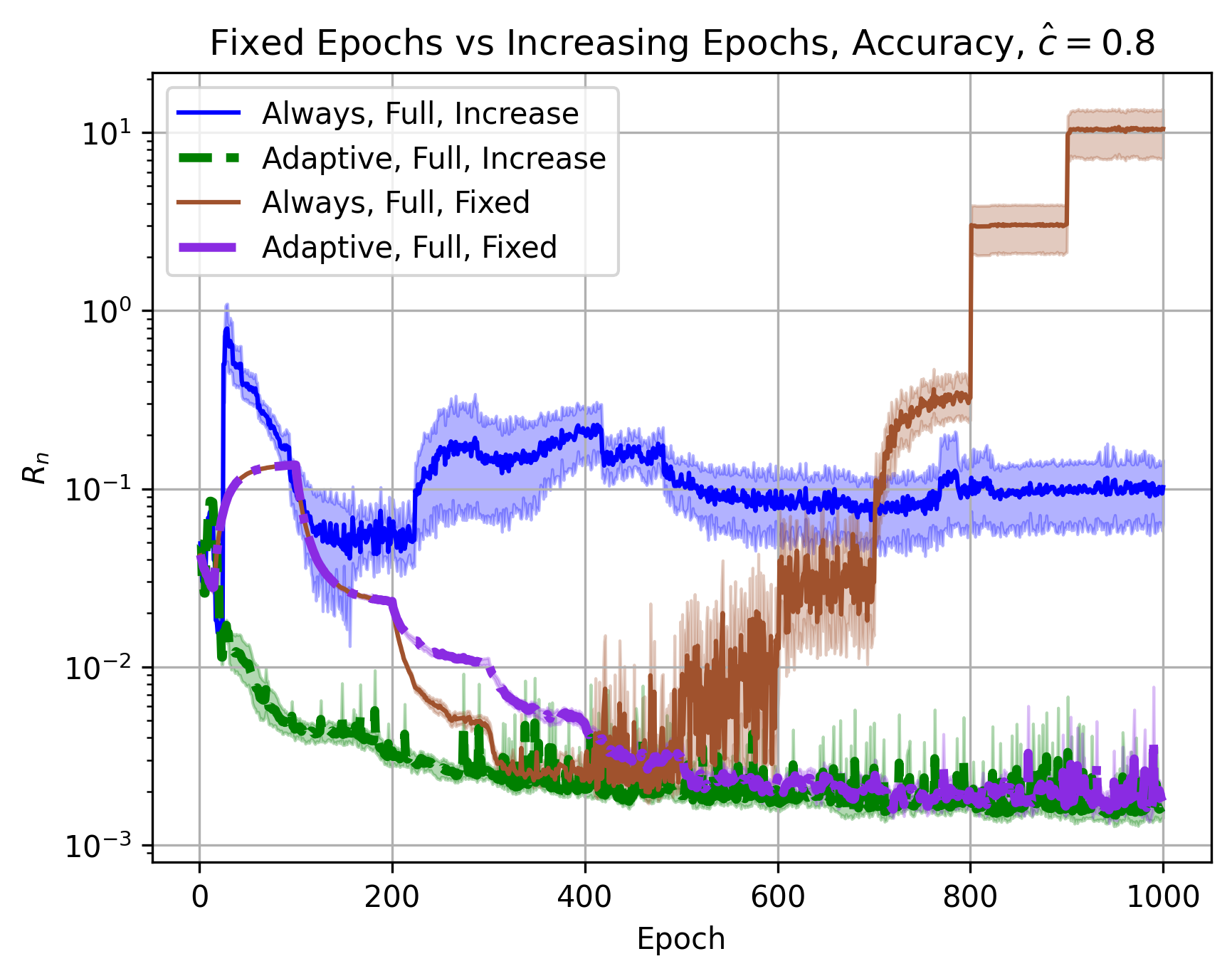}
        \caption{PStorm, Adapt vs. Fixed, Accuracy, $\hat{c} = 0.8$.}
        \label{fig:pstorm_comp_acc_0.8}
    \end{subfigure}
    \hfill
    \begin{subfigure}[b]{0.48\textwidth}
        \centering
        \includegraphics[width=\textwidth,height=4.14cm]{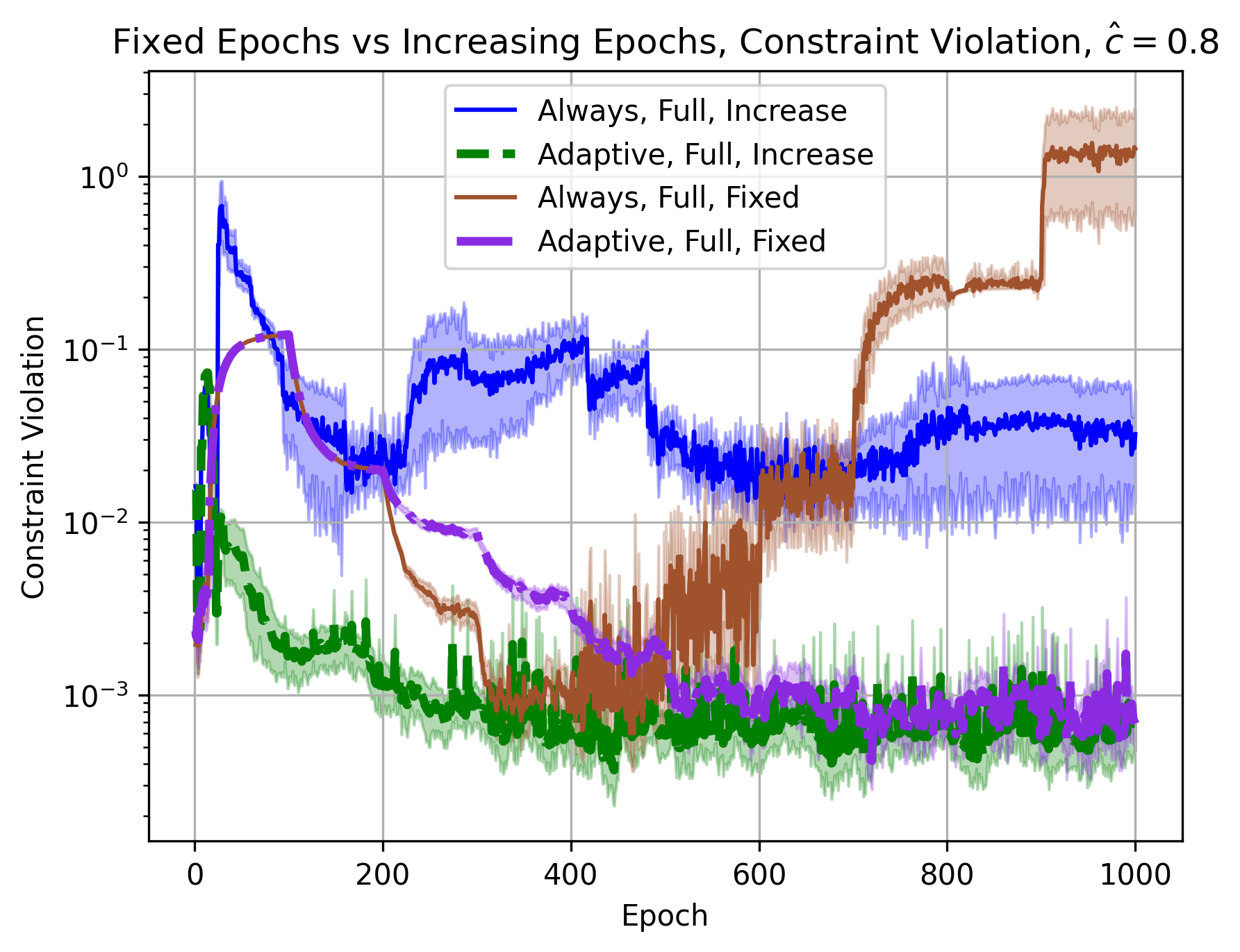}
        \caption{PStorm, Adapt vs. Fixed, Constr. Violation, $\hat{c} = 0.8$.}
        \label{fig:pstorm_comp_constr_0.8}
    \end{subfigure}

    \vspace{0.4cm} 

    \begin{subfigure}[b]{0.48\textwidth}
        \centering
        \includegraphics[width=\textwidth,height=4.14cm]{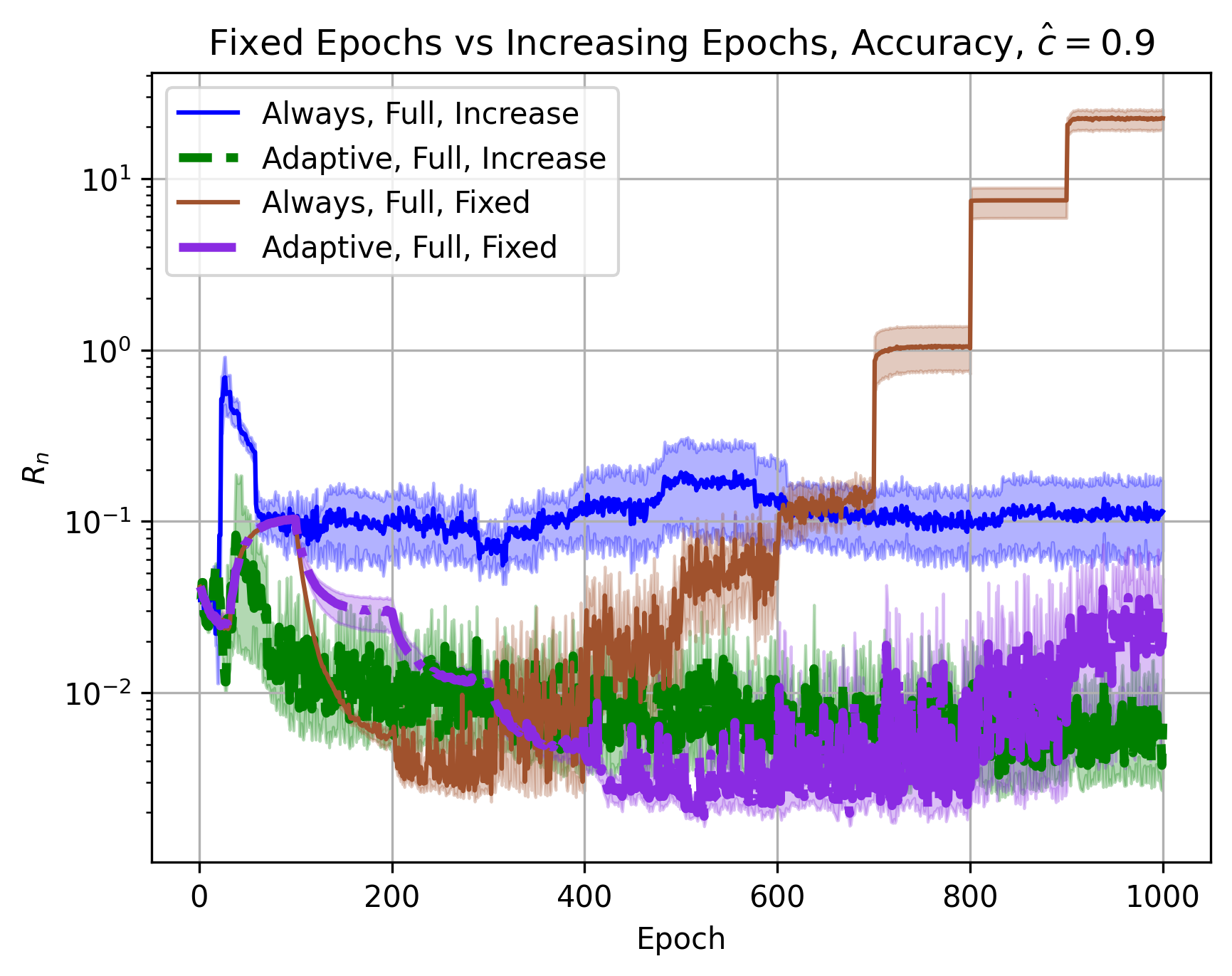}
        \caption{PStorm, Adapt vs. Fixed, Accuracy, $\hat{c} = 0.9$.}
        \label{fig:pstorm_comp_acc_0.9}
    \end{subfigure}
    \hfill
    \begin{subfigure}[b]{0.48\textwidth}
        \centering
        \includegraphics[width=\textwidth,height=4.14cm]{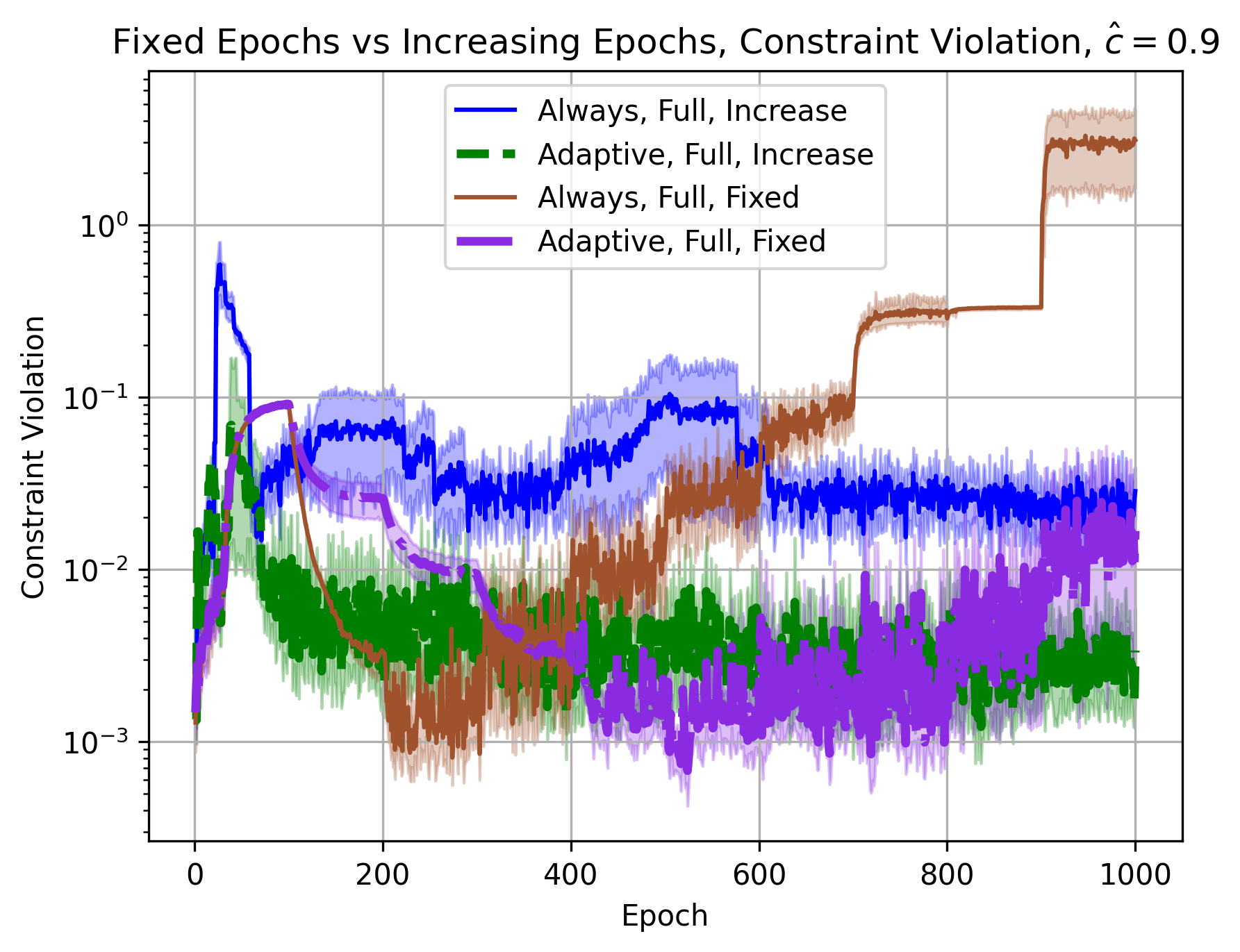}
        \caption{PStorm, Adapt vs. Fixed, Constr. Violation, $\hat{c} = 0.9$.}
        \label{fig:pstorm_comp_constr_0.9}
    \end{subfigure}
    
    \caption{Comparison of adaptive versus fixed inner epochs for PStorm varying $\hat{c}$. Average results over 20 trials with 95\% confidence intervals.}
    \label{fig:pstorm_comp}
\end{figure}

\end{document}